\newcommand{\ichselber}{Frieder Ladisch}
\newcommand{\meineaddresse}{Universit\"{a}t Rostock,
                        Institut f\"{u}r Mathematik,
                        Ulmenstr.~69, Haus~3,
                        18057 Rostock,
                        Germany}
\newcommand{\meineemail}{frieder.ladisch@uni-rostock.de}
\newcommand{\titellang}{Character correspondences
                        above fully ramified sections
                        and Schur indices}
\newcommand{\titellangsep}{Character correspondences \\ above fully ramified sections \\
                            and Schur indices}
\newcommand{\titelkurz}{Fully ramified sections}
\newcommand{\komma}{, }
\newcommand{\stichwoerter}{character theory of finite groups\komma
           fully ramified characters\komma
           Glauberman-Isaacs correspondence\komma
           Schur indices}
\newcommand{\mscwert}{Primary 20C15}
\newcommand{\llangle}{\langle\mspace{-4mu}\langle}\newcommand{\rrangle}{\rangle\mspace{-4mu}\rangle}
\newcommand{\skp}[1]{\llangle #1  \rrangle}  
\newcommand{\eskp}{\skp{\: , \, }}
\begin{document}
\title[\titelkurz]{\titellangsep}
\author{\ichselber}
\address{\meineaddresse}
\email{\meineemail}
\subjclass[2010]{\mscwert}
\keywords{\stichwoerter}
%
\begin{abstract}
Let $N$ be a finite group of odd order and $A$ a finite group
  that acts on $N$
  such that
  $\abs{N}$ and $\abs{A}$
  are coprime.
  Isaacs constructed a natural correspondence between
  the set $\Irr_A(N)$ of irreducible complex characters invariant under
  the action of $A$, and the set $\Irr (\C_N(A))$.
  We show that this correspondence preserves Schur indices over
  the rational numbers $\rats$.
  Moreover, suppose that the semidirect product $AN$ is a normal subgroup of
  the finite group $G$ and set $U=\N_G(A)$.
  Let $\chi\in \Irr_A(N)$ and $\chi^{*}\in \Irr (\C_N(A))$
  correspond.
  Then there is a canonical bijection between
  $\Irr(G\mid \chi)$ and $\Irr(U\mid \chi^{*})$ preserving Schur
  indices.
  We also give simplified and more conceptual proofs of (known)
  character
  correspondences above fully ramified sections.
\end{abstract}
\maketitle
%

\section{Introduction}
  Let $G$ be a finite group and
  let $L\subseteq K$ be normal subgroups of $G$.
  Suppose $\phi\in \Irr L$ is fully ramified in $K$.
  This means that $\phi$ is invariant in $K$ and that there is a    unique irreducible character
  $\theta\in \Irr K$ lying above $\theta$.
  This situation occurs naturally in the character theory of
  finite solvable groups, and
  a number of authors
  has studied this situation~\cite{dade76, i73, i82, i83b, lewis96a, lewis97}.
  Under additional conditions,
  there is a subgroup $H\leq G$ with $G=KH$ and $K\cap H=L$
  (see Figure~\ref{fig:bconf}), and
  a correspondence between
  $\Irr(G\mid \phi)$ and
  $\Irr(H\mid \phi)$.
  \begin{figure}[ht]
    \setlength{\unitlength}{0.45ex}
    \centering
    \begin{picture}(70, 55)(-8,-3)
     \put(22.5,2.5){\line(1,1){25}}
     \put(17.5,2.5){\line(-1,1){15}}
     \put(2.5,22.5){\line(1,1){25}}
     \put(47.5,32.5){\line(-1,1){15}}
     \put(18.0,-1){$L$}
     \put(13,-3){$\phi$}
     \put(-2.4,17.2){$K$}
     \put(-7,15.4){$\theta$}
     \put(28.1,48.4){$G$}
     \put(48.0,28.7){$H$}
    \end{picture}
    \caption{}
    \label{fig:bconf}
  \end{figure}
  In particular, Isaacs~\cite{i73} constructs
  such a bijection, when $K/L$ is abelian of odd order.
  He shows that there is a canonical character $\psi$ defined on
  $H/L$, all of whose values are nonzero, and that
  the equation $\chi_H=\psi\xi$ does define a bijection between
  $\chi \in \Irr(G\mid \phi)$ and
  $\xi \in \Irr(H\mid \phi)$.
  The construction of the character $\psi$ is rather lengthy and
  intricate.

  In this paper, we show that the results of Isaacs can be deduced
  from our theory of
  ``magic representations''~\cite{ladisch09diss, ladisch10pre}.
  In fact, this theory arose from
  an attempt to better understand the correspondence of Isaacs.
  The idea is as follows:
  Suppose  $\phi$ is invariant in
  $G$.
  Let $e_{\phi}$ be the central primitive idempotent
  of $\compl L$ associated with $\phi$.
  Since $\phi$ is fully ramified in $K$, we have
  $e_{\phi}=e_{\theta}$, where $\{\theta\}=\Irr(K\mid \phi)$.
  (In fact, this is equivalent to
   $\phi$ being fully ramified in $K$.)
  Set $S= (\compl Ke_{\phi})^L =\C_{\compl Ke_{\phi}}(L)$.
  Then $S\iso\mat_n(\compl)$.
  The factor group $G/L$ acts on $S$.
  Since all automorphisms of $S\iso\mat_n(\compl)$ are inner
  automorphisms,
  there is $\sigma(x)\in S^*$ for each $x\in G/L$ such that
  $s^x = s^{\sigma(x)}$ for all $s\in S$.
  This yields a projective
  representation $\sigma\colon G/L\to S$.
  If we can choose the $\sigma(x)$ such that the restriction of
  $\sigma$ to $H/L$ is an ordinary group representation,
  then we call $\sigma\colon H/L\to S$
  a \emph{magic representation}.
  It is fairly easy to show that
  $\compl G e_{\phi}\iso\mat_n(\compl H e_{\phi})$ when
  a magic representation exists.
  This explains the existence of a character correspondence.
  If $\psi$ is the character of $\sigma$, then
  $\chi_H = \psi \xi$ for corresponding
  $\chi\in \Irr(G\mid \phi)$ and
  $\xi\in \Irr(H\mid \phi)$.

  These results apply to character fives in general.
  They are
  explained in Section~\ref{sec:corr}.
  (Sections~\ref{sec:good}--\ref{sec:fives}
   contain preliminary material.)
  Section~\ref{sec:magic} contains
  results about magic representations
  for character fives.
  In Section~\ref{sec:coprim},
  we  give a very short and easy proof of
  a result including some results of Lewis~\cite{lewis96a, lewis97}.
  In Section~\ref{sec:isaacs}, we show that there is a magic
  representation when $K/L$ is abelian of odd order.
   In Section~\ref{sec:canonical}, we show that there is a
  canonical choice for the magic representation, thereby proving
  the existence of a canonical bijection.
  These two sections yield a new proof of
  Isaacs' result~\cite{i73}.

  The approach described so far works in fact for smaller fields
  than $\compl$, but the field has to contain the values of
  $\phi$.
  In a second part of the paper, we drop the assumption that
   the field contains the
  values of $\phi$.
  We also drop the assumption that $\phi$ is invariant in $G$.
  There is a unique central primitive idempotent, $f$, in
  $\rats L$, such that $\phi(f)\neq 0$.
  Using Clifford theory, one sees that it is no loss of generality
  to assume that $f$ is invariant in $G$.
  This means that the Galois orbit of $\phi$ is invariant in $G$,
  but $\phi$ itself may not be invariant.
  We are able to construct an explicit isomorphism
  $\rats G f\iso\mat_n(\rats Hf)$, when $K/L$ is abelian of odd
  order, and an additional condition
  is  given (Theorem~\ref{t:oddabelianschur}).
  The proof of this result, which occupies
  Sections~\ref{sec:compl} and~\ref{sec:proof},
   may be considered as the heart of this
  paper.
  The proof relies on the approach using magic representations.

  The assumption that $\phi$ is fully ramified in $K$ may be
  skipped.
  The more general result
   follows from
  Theorem~\ref{t:oddabelianschur} by reduction arguments that are more or
  less standard.
  (However, the ``going down'' theorem for semi-invariant
   characters, Proposition~\ref{p:gdsemi},
   might be new.)

  Isaacs~\cite{i73} gave two applications of his study of fully
  ramified sections.
  The first is now known as the Isaacs part
  of the Glauberman-Isaacs correspondence:
  Suppose a group, $A$, acts on another group,
  $N$\kern-0.1em,\/ such that $\abs{A}$ and $\abs{N}$ are relatively prime.
  In case $\abs{N}$ is odd, Isaacs constructed a natural
  correspondence between
  $\Irr_A(N)$, the set of irreducible characters of $N$ invariant
  under the action of $A$, and $\Irr C_N(A)$.
  As an application of our results, we get that this
  correspondence preserves Schur indices over all fields.
  (This is wrong for the Glauberman correspondence, as
   the example of the quaternion group with a $C_3$
   acting on it  shows.)
  Even more is true: Suppose that the semidirect product,
  $AN$, is an invariant subgroup of some finite group $G$.
  Set $U=\N_G(A)$ and $C=\C_N(A)=N\cap U$.
  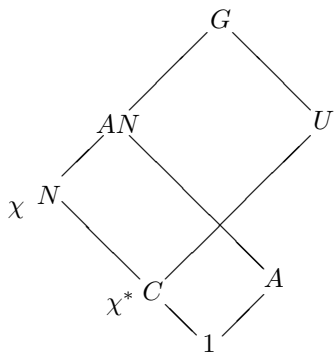
\begin{figure}[ht]
       \setlength{\unitlength}{0.45ex}
       \centering
       \begin{picture}(65, 65)(-8,-11)
          \put(-2.4,17.2){$N$}
          \put(9,31){$AN$}
          \put(31.1,51.4){$G$}
          \put(18.0,-1.6){$C$}
          \put(51.0,31.7){$U$}
          \put(29.6,-11.8){$1$}
          \put(41.5,1){$A$}
          \put(-8,15.4){$\chi$}
          \put(11,-3.5){$\chi^{*}$}
          \put(2.5,22.5){\line(1,1){8}}  
          \put(15.5,35.5){\line(1,1){15}}  
          \put(22.5,2.5){\line(1,1){28}} 
          \put(33.5,-8.5){\line(1,1){8}}  
          \put(17.5,2.5){\line(-1,1){15}} 
          \put(22.5,-2.5){\line(1,-1){6}} 
          \put(15.5,30.5){\line(1,-1){26}} 
          \put(50.5,35.5){\line(-1,1){15}} 
        \end{picture}
        \caption{Above the Isaacs correspondence}
        \label{fig:aboveisaacs}
  \end{figure}
  Let $\chi\in \Irr_A(N)$ and let
  $\chi^{*}\in \Irr C$ be its Isaacs correspondent.
  There is a unique primitive idempotent $i$
  in $(\rats N)^G=\C_{\rats N}(G)$ such that
  $\chi(i)\neq 0$, and a similar defined idempotent $i^{*}$
  in $(\rats C)^U$.
  Then
  $\rats G i \iso \mat_n(\rats U i^{*})$,
  with $n= \chi(1)/\chi^{*}(1)$,
  and there is a canonical correspondence between
  $\Irr(G\mid \chi)$ and $\Irr(U\mid \chi^{*})$.

  In his second application, Isaacs constructed, for a group $G$ of odd
  order,
  a bijection
  between the the set of irreducible characters of $G$  with degree not divisible by
  a given prime $p$, and the set of such characters of the
  normalizer of a Sylow $p$\nbd subgroup, thereby proving the
  McKay conjecture for groups of odd order.
  Let us mention that Turull~\cite{turull06, turull08}
  showed that this bijection preserves Schur indices over any
  field, if $\abs{G}$ is odd.
  His arguments are, however, quite different from those given
  here.

\section{Good elements}\label{sec:good}
We review the concept of \emph{good elements} introduced by
Gallagher~\cite[p.~177]{gall70}.
It is related to
a bilinear form introduced by
Isaacs~\cite{i73}
(Isaacs attributes the form to Dade).
Let $L\nteq G$.
Suppose that
$\phi \in \Irr L$ is invariant in $G$ and let $\crp{F}$ be a field
containing  the values of $\phi$.
Let $e_{\phi}$ be the central primitive idempotent in $\crp{F}L$
associated with $\phi$.
Then any $g\in G$ acts on $\crp{F}Le_{\phi}$ by conjugation.
Since $\crp{F}Le_{\phi}$ is central simple, by the Skolem-Noether
theorem there is $c_g\in (\crp{F}Le_{\phi})^*$ such that
$a^g= a^{c_g}$ for all $a\in \crp{F}Le_{\phi}$.
The element $c_g$ is determined up to multiplication with elements of
$\crp{F}$ by this
property.

If $x,y \in G$ with $[x,y]\in L$ then
$[x,y]e_{\phi}$ and $[c_x,c_y]$ induce the same action
(by conjugation) on $\crp{F} Le_{\phi}$ and so these elements differ by some
scalar. We denote this scalar by  $\skp{ x,y }_{\phi}\in \crp{F}$.
So by definition,
\[ \skp{ x,y }_{\phi}e_{\phi} = [x,y][c_y,c_x] .\]
This definition is independent of the choice of $c_x$ and $c_y$,
since this choice is unique up to multiplication with scalars.

Alternatively, assume that $\phi$ is afforded by a representation
$\rho \colon L \to \mat_{\phi(1)}(\crp{F})$.
For $g\in G$ there is
$\gamma_g\in \mat_{\phi(1)}(\crp{F})$ with
$\rho(l^g)= \rho(l)^{\gamma_g}$ for all $l\in L$.
If $[x,y]\in L$, then
$\rho([x,y])[\gamma_y,\gamma_x]$ centralizes $\rho(L)$,
and thus
it is a scalar matrix. Define $\skp{x,y}_{\phi}$ by
$\skp{x,y}_{\phi}I = \rho([x,y])[\gamma_y,\gamma_x]$.
Since the restriction of $\rho$ to $\crp{F} L e_{\phi}$ is an
isomorphism between $\crp{F} L e_{\phi}$ and
$\mat_{\phi(1)}(\crp{F})$, both definitions agree. From the first
definition we see, however, that
$\skp{x,y}_{\phi}\in \rats(\phi)$, while for the second we have to
assume that $\phi$ is afforded by a representation over $\crp{F}$.
On the other hand the second
definition works for absolutely irreducible
representations over fields of
any characteristic.

Isaacs' definition~\cite[p.~596]{i73} is different, but from the definition
given here it is easier to prove that $\eskp_{\phi}$ is indeed a
bilinear alternating form.
(I learned this definition from
Kn\"{o}rr.)

In most of this work, $\phi$ will be fixed, and so we drop the index if
no confusion can arise.
\begin{lemma}\label{l:skp}
  Let $g$, $x$, $x_1$, $x_2$, $y \in G$ with $[x,y]$, $[x_i,y]\in L$ and
  $l_1$, $l_2 \in L$, and define $\skp{ x,y}_{\phi}=\skp{x,y}$ as above.
   Then
  \begin{enums}
  \item $\skp{ x_1x_2,y }
           = \skp{ x_1,y }
                     \skp{ x_2,y }$.
  \item $\skp{ y,x}
               = \skp{ x,y}^{-1}$.
  \item $\skp{ x l_1, yl_2}
             = \skp{ x,y}$.
  \item $\skp{ x^g, y^g}
               = \skp{ x,y}$.
  \end{enums}
\end{lemma}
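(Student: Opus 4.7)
The plan is to verify each identity directly from the defining formula $\skp{x,y}_{\phi}e_{\phi} = [x,y][c_y, c_x]$, exploiting the freedom to multiply each $c_g$ by a scalar---so that we may take $c_{x_1x_2} := c_{x_1} c_{x_2}$, $c_l := l e_{\phi}$ for $l \in L$, and $c_{x^g} := c_g^{-1} c_x c_g$---together with the standard commutator identities $[ab,c]=[a,c]^b[b,c]$ and $[a,bc]=[a,c][a,b]^c$.

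For~(a), with $c_{x_1x_2}:=c_{x_1}c_{x_2}$, a straightforward expansion of $[x_1x_2, y][c_y, c_{x_1x_2}]$ via the commutator identities, followed by pulling out the central factor $[x_2,y][c_y,c_{x_2}]=\skp{x_2,y}_{\phi} e_{\phi}$, leaves an expression of the form $c_{x_2}^{-1}\bigl([x_1,y][c_y,c_{x_1}]\bigr)c_{x_2}$; its inner bracket is already the scalar $\skp{x_1,y}_{\phi} e_{\phi}$, so the $c_{x_2}$-conjugation is vacuous, and~(a) follows.

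For~(b), multiply the defining equations and set $u=[x,y]$, $v=[c_y,c_x]$; since $[y,x]=u^{-1}$ and $[c_x,c_y]=v^{-1}$, we get $\skp{x,y}_{\phi}\skp{y,x}_{\phi}e_{\phi} = uvu^{-1}v^{-1}$. But $v$ induces on $\crp{F}Le_{\phi}$ the same inner automorphism as $[y,x]=u^{-1}$, so $v^{-1}uv = u^{u^{-1}} = u$; hence $u$ and $v$ commute and the product is $e_{\phi}$.

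For~(c), since the form is multiplicative in both arguments (by~(a) and~(b)), it suffices to check $\skp{l, y}_{\phi} = 1$ for $l\in L$; with $c_l := le_{\phi}$, a one-line calculation yields $[l,y][c_y,c_l] = e_{\phi}$. For~(d), with $c_{x^g}:=c_g^{-1}c_xc_g$ and similarly for $c_{y^g}$, one has $[c_{y^g},c_{x^g}] = c_g^{-1}[c_y,c_x]c_g$ and $[x^g,y^g]e_{\phi} = c_g^{-1}[x,y]c_g$, giving $[x^g,y^g][c_{y^g},c_{x^g}] = c_g^{-1}\skp{x,y}_{\phi} e_{\phi} c_g = \skp{x,y}_{\phi} e_{\phi}$. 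The main obstacle is the bookkeeping in~(a): group commutators and algebra commutators must line up under $c_{x_2}$-conjugation so that the resulting inner factor is a recognizable central scalar. The other three parts then reduce to short direct verifications.
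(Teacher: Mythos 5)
Your proof is correct and takes exactly the approach the paper intends: the paper's proof of Lemma~\ref{l:skp} simply declares all four parts to be routine calculations with commutator identities, and your argument supplies precisely those calculations, using the freedom in choosing the $c_g$'s (e.g.\ $c_{x_1x_2}=c_{x_1}c_{x_2}$, $c_l=le_{\phi}$, $c_{x^g}=c_g^{-1}c_xc_g$) and the fact that conjugation by $c_g$ agrees with conjugation by $g$ on $\crp{F}Le_{\phi}$. All four verifications check out, including the key step in~(a) where the central factor $[x_2,y][c_y,c_{x_2}]$ really does appear as an adjacent block in the expanded product.
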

\begin{proof}
 All assertions can be
  verified  with routine calculations using commutator identities.
\end{proof}
In particular, $\eskp $ is constant on
cosets of $L$, so we may view $\eskp $ as
being defined on certain elements of $G/L\times G/L$, and we will
do so whenever convenient.
\par
Another trivial remark is this: Suppose that another group, $A$,
acts on $G$ (we use exponential notation $g^a$ for the action)
and stabilizes $\phi$ (that is, $L^a= L$ and $\phi(l^a)=\phi(l)$
for $a\in A$ and $l\in L$).
Then if $x\in G$ and $a\in A$ with $[x,a]=x^{-1}x^a\in L$,
the form $\skp{x,a}_{\phi}$ is still defined. This is clear since we may
work in the semidirect product $AG$, with the usual
identifications of $G$, $L$ and $A$ with subgroups of the
semidirect product. So we will sometimes use
the notation $\skp{x,a}_{\phi}$ in this more general situation
without further explanation.
\par
The  definition of the form given by \index{Isaacs, I.~Martin}Isaacs~\cite[p.~596]{i73} was
from the next lemma for $H= \erz{ L, h}$.
It shows that the form can be computed using only characters.
\begin{lemma}\label{l:cfskp}
  Let $L \leq H \leq G$ and $\chi $ be  a classfunction of $H$ with
  all its irreducible constituents lying over $\phi$. Let
  $h\in H$ and $g\in G$ with $[h,g]\in L$. Then
  $\chi(h^g)= \chi(h) \skp{ h,g}$.
\end{lemma}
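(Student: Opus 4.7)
By linearity in $\chi$, it suffices to prove the identity when $\chi = \eta$ is a single irreducible constituent in $\Irr(H\mid \phi)$. For such $\eta$, Clifford theory (applied to the $G$-invariant, hence $H$-invariant, character $\phi$) shows that $\eta_L$ is a multiple of $\phi$; hence any representation $\rho$ affording $\eta$ satisfies $\rho(e_\phi) = I$, since the central idempotent $e_\phi$ acts as the identity on the $\phi$-isotypic component. Extending $\chi$ linearly to $\crp{F}H$, this yields $\chi(xe_\phi) = \chi(x)$ for every $x \in \crp{F}H$, so the task reduces to computing $\chi(h^g e_\phi)$ inside the algebra $\crp{F}G e_\phi$.

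For the computation, write $h^g = h[h, g]$ with $[h, g] \in L$. The defining relation $\skp{h, g}e_\phi = [h, g][c_g, c_h]$ then gives
\[ h^g e_\phi \;=\; h \cdot \skp{h, g}\, [c_g, c_h]^{-1} \;=\; \skp{h, g}\, h c_h^{-1} c_g^{-1} c_h c_g . \]
The element $z := h c_h^{-1}$ centralises $\crp{F}L e_\phi$: for any $a \in \crp{F}L e_\phi$, both $h^{-1} a h$ and $c_h^{-1} a c_h$ represent the $h$-conjugate of $a$, so they coincide, whence $za = az$. Commuting $z$ past $c_g^{-1}$ and then collapsing $c_h^{-1} c_h = e_\phi$ yields
\[ h c_h^{-1} c_g^{-1} c_h c_g \;=\; c_g^{-1} (h c_h^{-1}) c_h c_g \;=\; c_g^{-1} h c_g , \]
so that $h^g e_\phi = \skp{h, g}\, c_g^{-1} h c_g$ inside $\crp{F}G e_\phi$.

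Finally, applying $\chi$ and using the trace identity $\chi(ab) = \chi(ba)$ satisfied by any classfunction on the group algebra, together with $c_g c_g^{-1} = e_\phi$ (the identity of $\crp{F}L e_\phi$), one obtains
\[ \chi(h^g) \;=\; \chi(h^g e_\phi) \;=\; \skp{h, g}\, \chi(c_g^{-1} h c_g) \;=\; \skp{h, g}\, \chi(h c_g c_g^{-1}) \;=\; \skp{h, g}\, \chi(h e_\phi) \;=\; \skp{h, g}\, \chi(h). \]
The only conceptual step is the observation that $h c_h^{-1}$ centralises $\crp{F}L e_\phi$; everything else is mechanical commutator manipulation combined with the trace property of classfunctions, so I do not foresee any serious obstacle.
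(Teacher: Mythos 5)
Your argument is correct, but it takes a genuinely different route from the paper's. The paper first restricts $\chi$ to $\erz{L,h}$ (which contains both $h$ and $h^g=h[h,g]$), reducing to the case where $H/L$ is cyclic and the irreducible constituent above the invariant character $\phi$ actually \emph{extends} $\phi$; it then evaluates $\widehat{\rho}(h^g)$ for an extending representation $\widehat{\rho}$ using the matrix definition of the form, and takes traces. You instead stay with the idempotent/Skolem--Noether definition and work entirely inside the group algebra: your key observation that $hc_h^{-1}$ centralizes $\crp{F}Le_{\phi}$ is precisely the statement that the graded unit $s_h=hc_h^{-1}=c_h^{-1}h$ from Section~2 lies in $(\crp{F}Ge_{\phi})^L$, and the identity $h^ge_{\phi}=\skp{h,g}\,c_g^{-1}hc_g$ together with the trace property $\chi(ab)=\chi(ba)$ of class functions finishes the proof. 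What your approach buys is that it avoids both the reduction to a cyclic-over-$L$ subgroup and the appeal to extendibility of $\phi$ there; what the paper's version buys is a computation carried out purely with representations, closer to the character-theoretic definition of $\eskp_{\phi}$ that Isaacs originally used. One cosmetic remark: although you phrase the manipulation as taking place in $\crp{F}Ge_{\phi}$, every element involved ($h$, $h^g$, $c_g$, $c_h$) in fact lies in $\crp{F}He_{\phi}$, which is what legitimizes applying the linearly extended class function $\chi$ of $H$ to them; it would be worth saying so explicitly.
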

\begin{proof}
  We work in the subgroup $\erz{L, h}$ of $H$.
  Writing $\chi_{\erz{L, h}}$ as a linear combination of irreducible
  characters lying above $\phi$, we see that it is no loss to
  assume that $H=\erz{L,h}$, and that
  $\chi$ is irreducible.
  Since $H/L$ is then cyclic and
  $\chi\in \Irr(H\mid\phi)$, in fact
  $\chi$ extends $\phi$.
  Let $\widehat{\rho}\colon  H \to \mat_{\phi(1)}(\compl)$ be a representation
  affording $\chi$ that extends the representation
  $\rho$ affording $\phi$. Choose
  $\gamma_g\in \mat_{\phi(1)}(\compl) $ with
  $\rho(l^g)=\rho(l)^{\gamma_g}$, and let
  $\gamma_h= \widehat{\rho}(h)$.
  Then
  \begin{align*}
    \widehat{\rho}(h^g)
       = \widehat{\rho}(h[h,g])
      &= \widehat{\rho}(h)\rho[h,g] [\gamma_g, \gamma_h][\gamma_h,\gamma_g]
      \\
      &= \widehat{\rho}(h) \skp{h,g}[ \widehat{\rho}(h), \gamma_g]
      \\
      &= \widehat{\rho}(h)^{\gamma_g}\skp{h,g}.
  \end{align*}
  Taking the trace yields the desired result.
\end{proof}
\par
We continue to assume that $\phi\in \Irr L$ is invariant in $G$, where
$L\nteq G$, and $\crp{F}$ is a field containing the values of $\phi$.
We review some known material that yields other ways to
compute the form $\eskp_{\phi}$.
Remember that $A= \crp{F}Ge_{\phi}$ is naturally graded
by the group $G/L$: For $x=Lg\in G/L$, set
$A_x = \crp{F}Le_{\phi}g$
(of course, this is independent of the choice of $g\in x$).
Then $A_xA_y = A_{xy}$ and $A=\bigoplus_{x\in G/L}A_x$.
Now let $S=\C_{A}(A_1)= \C_{A}(L)= (\crp{F}Ge_{\phi})^L$.
The grading of $A$ yields a grading of $S$, namely
$S=\bigoplus_{x\in G/L} S_x$ with $S_x=\C_{A_x}(A_1)$.

A graded unit of $S$ is a unit of $S$ that is contained in some
$S_x$. The set of all graded units of a graded algebra forms a
group.
It is well known that in the situation at hand,
$S_x$ contains units for all $x\in G/L$.
 Namely,  for $g\in G$ there is $c_g\in \crp{F}Le_{\phi}$
with $a^g=a^{c_g}$ for all $a\in \crp{F}Le_{\phi}$, and then
$s_{g}:= c_g^{-1}g = gc_g^{-1} \in \C_{\crp{F}Le_{\phi}g}(L)=S_{Lg}$.
Let $g$, $h\in G$ with $[g,h]\in L$.
Then
\begin{align*}
  [s_{g}, s_{h}]
     = [s_g,h]
     &= [c_g^{-1},h]^g[g,h]
     = [c_g^{-1},c_h]^{c_g}[g,h]
     \\
     &= [c_h,c_g][g,h]
     = \skp{g,h}_{\phi},
\end{align*}
where the first equality follows since $s_g$ and $c_h\in A_1$
commute, and the third equality follows since
$c_g^{-1}$ and $[c_g^{-1}, h]$ are elements of $\crp{F}Le_{\phi}$.
Since $S_{Lg}=\crp{F}s_g$ and similarly for $S_{Lh}$, this is
true for every choice of units
$s_x\in S_{Lg}$, $s_y\in S_{Lh}$.

Suppose we choose a unit $s_x\in S_x$ for every $x\in G/L$.
Then a cocycle $\alpha\colon  G/L \to \crp{F}^*$ is defined
by $s_x s_y = \alpha(x,y)s_{xy}$. We have
\[ \skp{x,y}_{\phi} = [s_x,s_y]= \frac{\alpha(x,y)}{\alpha(y,x)}.\]
(Note that $\alpha$ depends on the choice of the $s_x$, but its
cohomology class does not.)
We have thus proved the following lemma:
\begin{lemma}
\label{l:twisted0}
Hold the notation just introduced, and let $x,y\in G/L$ with
$[x,y]=1_{G/L}$. Then
\[\skp{x,y}_{\phi}e_{\phi} = [s_x,s_y]= s_x^{-1}s_x^y
  \quad\text{and}\quad
  \skp{x,y}_{\phi} = \frac{\alpha(x,y)}{\alpha(y,x)}.
  \]
\end{lemma}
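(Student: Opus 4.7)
The plan is to observe that the key identity $\skp{x,y}_\phi e_\phi = [s_x,s_y]$ is essentially already contained in the calculation that precedes the lemma in the body of the text; what remains is to make it independent of the particular choice of the $s_x$ and to derive the two equivalent reformulations. Since each homogeneous component $S_{Lg}$ is one-dimensional over $\crp{F}$ (being generated over $\crp{F}$ by $s_g = c_g^{-1}g$), any other unit of $S_{Lg}$ differs from $s_g$ by a nonzero scalar; scalars cancel in the commutator, so it suffices to verify the identity for the specific units $s_g=c_g^{-1}g$. This is exactly what was done: expand $[s_g,s_h]=[s_g,h]$ (valid because $s_g$ centralizes $A_1\ni c_h$), rewrite $[c_g^{-1}g,h]=[c_g^{-1},h]^g[g,h]$, use $[c_g^{-1},h]=[c_g^{-1},c_h]$ inside $\crp{F}Le_\phi$, and then conjugation by $g$ on this subalgebra equals conjugation by $c_g$, yielding $[c_h,c_g][g,h]=\skp{g,h}_\phi e_\phi$.

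For the second equality $[s_x,s_y]=s_x^{-1}s_x^y$, I would simply unfold the convention $[s_x,s_y]=s_x^{-1}s_y^{-1}s_xs_y$; since $s_x^y=s_y^{-1}s_xs_y$ (conjugation in the group of graded units of $S$), the claim is immediate. This reformulation is useful because when $[x,y]=1_{G/L}$ both $s_x$ and $s_x^y$ lie in $S_x = \crp{F}s_x$, so their ratio is a scalar, which gives a clean conceptual reason why $\skp{x,y}_\phi$ is a scalar in $\crp{F}^*$.

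For the cocycle formula, I would start from the defining relations $s_xs_y=\alpha(x,y)s_{xy}$ and $s_ys_x=\alpha(y,x)s_{yx}$, observe that $s_{xy}=s_{yx}$ whenever $xy=yx$ in $G/L$, and then compute
\[[s_x,s_y]=(s_ys_x)^{-1}(s_xs_y)=\alpha(y,x)^{-1}s_{xy}^{-1}\cdot\alpha(x,y)s_{xy}=\frac{\alpha(x,y)}{\alpha(y,x)}.\]
Combining this with the first part yields the stated identity $\skp{x,y}_\phi=\alpha(x,y)/\alpha(y,x)$. There is no real obstacle: everything is formal manipulation in the graded algebra $S$, the only subtle point being to remember that the action of $g\in G$ on $\crp{F}Le_\phi$ coincides with conjugation by $c_g$, which is what allows one to replace $[c_g^{-1},h]^g$ by $[c_h,c_g]$ in the central computation.
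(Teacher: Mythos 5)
Your proposal is correct and follows essentially the same route as the paper: the paper's ``proof'' of Lemma~\ref{l:twisted0} is precisely the displayed computation $[s_g,s_h]=[s_g,h]=[c_g^{-1},h]^g[g,h]=[c_h,c_g][g,h]=\skp{g,h}_{\phi}$ together with the remark that $S_{Lg}=\crp{F}s_g$ makes the identity independent of the choice of graded units, and the cocycle formula is obtained exactly as you do from $s_xs_y=\alpha(x,y)s_{xy}$. Your added unfolding of $[s_x,s_y]=s_x^{-1}s_x^y$ and the observation that $s_{xy}=s_{yx}$ when $xy=yx$ are the only (trivial) details the paper leaves implicit.
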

Note that the equation $\skp{x,y}_{\phi}e_{\phi}= s_x^{-1}s_x^y$
is still true if $y$ is an element of some group acting on $G$ and
stabilizing $\phi$, and such that $x\in \C_{G/L}(y)$.
\begin{defi}
 Let $L\leq H\leq G$ and $g\in G$
 (or $g\in \Aut G$, stabilizing $\phi$).
 Then $g$ is called $H$\nbd $\phi$\nbd good if
 $\skp{ c, g }_{\phi}=1 $ for all
 $c \in \C_{H/L}(g)$.
 We drop  $\phi$ if it is clear from
 context. We also drop $H$ if $H=G$.
\end{defi}
By Lemma~\ref{l:skp}, $g\in G$ is ($H$\nbd $\phi$-) good if and only if any other
element of $Lg$ is.
Also if $g$ is $H$\nbd good, then any
$H$\nbd conjugate of $g$ is $H$\nbd good. We can thus speak of good
conjugacy-classes of $G/L$.
Lemma~\ref{l:cfskp} has the following consequence:
\begin{lemma}\label{l:zerogood}
  If $h\in G$ and $\chi\in \compl[\Irr(G\mid \phi)]$ are such that
  $\chi(h)\neq 0$, then $h$ is good for $\phi$.
\end{lemma}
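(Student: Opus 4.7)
The plan is to deduce this directly from Lemma~\ref{l:cfskp} together with the alternating property of the form in Lemma~\ref{l:skp}(ii). The key observation is that the defining equation $\chi(h^g) = \chi(h)\skp{h,g}_\phi$ in Lemma~\ref{l:cfskp} extends by $\compl$-linearity in $\chi$ to any element of $\compl[\Irr(G\mid\phi)]$, and we can combine it with the fact that $\chi$, being a class function, is constant on $G$-conjugacy classes.

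So, unwinding the definition of good, I need to show that $\skp{c,h}_\phi = 1$ for every $c \in G$ with $Lc \in \C_{G/L}(Lh)$, i.e.\ with $[c,h] \in L$. Equivalently, since $[h,c] \in L$ as well, it suffices by Lemma~\ref{l:skp}(ii) to prove $\skp{h,c}_\phi = 1$. First I apply Lemma~\ref{l:cfskp} in the form
\[
  \chi(h^c) \;=\; \chi(h)\,\skp{h,c}_\phi ,
\]
which is legitimate because all irreducible constituents of $\chi$ lie over $\phi$ and the lemma is $\compl$-linear in the class function. Next I use that $\chi$ is a class function, so $\chi(h^c)=\chi(h)$, giving $\chi(h)\bigl(\skp{h,c}_\phi - 1\bigr) = 0$. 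Since by hypothesis $\chi(h)\neq 0$, I conclude $\skp{h,c}_\phi = 1$, and then $\skp{c,h}_\phi = 1$ by Lemma~\ref{l:skp}(ii). As this holds for every $c \in G$ with $Lc \in \C_{G/L}(Lh)$, the element $h$ is good for $\phi$.

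There is no real obstacle; the only point worth mentioning is verifying that Lemma~\ref{l:cfskp} applies to a $\compl$-linear combination of irreducible characters above $\phi$, which follows immediately by linearity from the irreducible case stated there. The proof is a direct two-line computation, and the choice of which slot of $\eskp_\phi$ to put $h$ into is dictated by the fact that our hypothesis gives $\chi(h)\neq 0$ rather than $\chi(c)\neq 0$, which is why the use of Lemma~\ref{l:skp}(ii) at the end is essential.
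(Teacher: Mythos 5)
Your proof is correct and is exactly the argument the paper intends: the paper states the lemma as an immediate consequence of Lemma~\ref{l:cfskp}, which is precisely your computation $\chi(h)=\chi(h^c)=\chi(h)\skp{h,c}_{\phi}$ followed by Lemma~\ref{l:skp}(ii). Note that Lemma~\ref{l:cfskp} is already stated for class functions with all irreducible constituents over $\phi$, so your linearity remark, while valid, is not even needed.
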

The following result is due to Gallagher~\cite{gall70}:
\begin{lemma}\label{l:gallnrirr}
Let $\phi\in \Irr L$ be invariant in $G$, where $L\nteq G$.
Then
$\abs{\Irr(G\mid \phi)}$ equals the number of $\phi$\nbd good
conjugacy classes of $G/L$.
\end{lemma}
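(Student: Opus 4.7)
The plan is to compute $\abs{\Irr(G\mid \phi)}$ as $\dim_{\compl} Z(S)$, where $S = (\compl G e_\phi)^L$, and then identify this dimension with the number of $\phi$-good conjugacy classes of $G/L$. Since $R := \compl L e_\phi \iso \mat_{\phi(1)}(\compl)$ is central simple over $\compl$ and contains $1_A = e_\phi$ ($\phi$ being $G$-invariant makes $e_\phi$ central in $\compl G$), the double centralizer theorem gives $A := \compl G e_\phi \iso R \otimes_\compl S$. Hence the simple components of $A$—which are in bijection with $\Irr(G\mid \phi)$—correspond to those of $S$, yielding $\abs{\Irr(G\mid \phi)} = \dim_{\compl} Z(S)$.

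Next I would choose graded units $s_x \in S_x$ for each $x \in G/L$ as in the paragraph preceding Lemma~\ref{l:twisted0}. Each $S_x$ equals $\C_{A_x}(\compl L e_\phi) = Z(\compl L e_\phi)\, s_x = \compl s_x$, so $\{s_x\}_{x \in G/L}$ is a $\compl$-basis of $S$ with $s_x s_y = \alpha(x,y) s_{xy}$ for a $2$-cocycle $\alpha$. For a general $z = \sum_x a_x s_x$, expanding the centrality condition $z s_y = s_y z$ and comparing coefficients of each $s_w$ yields
\[
  a_x\, \alpha(y, x) = a_{{}^y x}\, \alpha({}^y x, y)
  \qquad \text{for all } x, y \in G/L,
\]
where ${}^y x := y x y^{-1}$. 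Specializing to $y \in \C_{G/L}(x)$ forces $\alpha(y,x) = \alpha(x,y)$ whenever $a_x \neq 0$, which by Lemma~\ref{l:twisted0} is $\skp{x,y}_\phi = 1$. Thus the support of any $z \in Z(S)$ is a union of $\phi$-good classes.

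Conversely, for each good class $C$ of $G/L$, fixing $x_0 \in C$ I would define $a_x$ on $C$ by $a_{{}^y x_0} := \alpha(y,x_0)/\alpha({}^y x_0, y)$, then check that the twisted class sum $\sum_{x \in C} a_x s_x$ lies in $Z(S)$. The main step is well-definedness: if $y_1 x_0 y_1^{-1} = y_2 x_0 y_2^{-1}$, then $c := y_2^{-1} y_1 \in \C_{G/L}(x_0)$, and the $2$-cocycle identity combined with $\alpha(c, x_0) = \alpha(x_0, c)$ (the good condition on $x_0$) forces the two expressions to agree. The resulting class sums are linearly independent (disjoint supports) and span $Z(S)$ (any central element is determined on each orbit by its value at a single representative), so they form a $\compl$-basis of $Z(S)$ indexed by the $\phi$-good classes. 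The only real obstacle is this well-definedness check, which amounts to a short but careful manipulation of the cocycle identity.
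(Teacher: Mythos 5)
Your argument is correct, and every ingredient you use is already set up in the paper (the graded decomposition $S=\bigoplus_{x\in G/L}S_x$ with $S_x=\compl s_x$, and the identity $\skp{x,y}_{\phi}=\alpha(x,y)/\alpha(y,x)$ of Lemma~\ref{l:twisted0}). The first reduction is fine: since $\compl Le_{\phi}\iso\mat_{\phi(1)}(\compl)$ is central simple and contains the identity of $A=\compl Ge_{\phi}$, the double centralizer theorem gives $A\iso\mat_{\phi(1)}(S)$, so $\abs{\Irr(G\mid\phi)}=\dim_{\compl}\Z(A)=\dim_{\compl}\Z(S)$. Your centrality computation and the well-definedness of the twisted class sums also check out; a slightly slicker way to package the converse direction is to set $z_C=\sum_{y}s_ys_{x_0}s_y^{-1}$, summed over coset representatives of $\C_{G/L}(x_0)$: replacing $y$ by $yc$ with $c\in\C_{G/L}(x_0)$ multiplies the summand by $\skp{x_0,c}_{\phi}$, so the sum is well defined exactly when $x_0$ is good, and conjugation by any $s_w$ permutes the summands, so $z_C$ is central. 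The paper itself gives no proof of this lemma — it is quoted from Gallagher, whose original argument is a character-theoretic counting argument — so your proof via the center of the twisted group algebra $S=(\compl Ge_{\phi})^L$ is not "the paper's route," but it is the natural one given the machinery the paper develops, and it has the advantage of being self-contained within that framework.
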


For later use, we prove the following simple
lemma which is essentially due to Isaacs~\cite[p.~600]{i73}:
\begin{lemma}\label{l:cpgood}
  Let $L\nteq G$ and $\phi\in \Irr L$ be invariant in $G$.
  Let $g\in G$ and $K \geq L$. If $g^{m}$ is $K$\nbd good, where
  $(m,\abs{K/L})=1$, then $g$ is $K$\nbd good.
\end{lemma}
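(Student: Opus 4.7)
My plan is to chase the bilinearity of $\eskp_\phi$ and use the coprimality hypothesis in the standard way one uses coprimality with an exponent.

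First I would observe that anything commuting with $g$ automatically commutes with $g^m$, so $\C_{K/L}(g) \subseteq \C_{K/L}(g^m)$. Thus, if $c\in \C_{K/L}(g)$, then $\skp{c,g^m}_\phi = 1$ by the assumption that $g^m$ is $K$\nbd good. The idea is then to relate $\skp{c,g^m}_\phi$ to $\skp{c,g}_\phi$ via bilinearity, and to exploit that $\skp{c,g}_\phi$ is a root of unity whose order is coprime to $m$.

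Next I would extract bilinearity in the second variable from Lemma~\ref{l:skp}. Parts (a) and (b) of that lemma give, by a one-line computation,
\[ \skp{c, g^m}_\phi = \skp{g^m, c}_\phi^{-1} = \bigl(\skp{g,c}_\phi^m\bigr)^{-1} = \skp{c,g}_\phi^m, \]
so the hypothesis translates to $\skp{c,g}_\phi^m = 1$.

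The last ingredient is that $\skp{c,g}_\phi$ has finite order dividing $|K/L|$. Let $k$ be the order of $c$ in $K/L$, so $k \mid \abs{K/L}$ and $c^k\in L$. Applying Lemma~\ref{l:skp}(a) repeatedly gives $\skp{c^k, g}_\phi = \skp{c,g}_\phi^{k}$, while Lemma~\ref{l:skp}(c) (together with $\skp{1,g}_\phi = 1$, which follows by taking $x_1=x_2=1$ in part~(a)) shows $\skp{c^k,g}_\phi = 1$. Hence $\skp{c,g}_\phi^{k}=1$. Since $(m,\abs{K/L})=1$ implies $(m,k)=1$, writing $1 = am + bk$ yields
\[ \skp{c,g}_\phi = \skp{c,g}_\phi^{am+bk} = \bigl(\skp{c,g}_\phi^m\bigr)^a \bigl(\skp{c,g}_\phi^k\bigr)^b = 1, \]
and this holds for every $c\in \C_{K/L}(g)$, as required.

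I do not anticipate any serious obstacle: the whole argument is a routine exercise in the bilinearity recorded in Lemma~\ref{l:skp}, and the only thing to be slightly careful about is the inclusion of centralizers (which we only need in one direction) and the explicit derivation of bilinearity in the second slot, which the lemma does not state verbatim.
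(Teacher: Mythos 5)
Your proof is correct and follows essentially the same route as the paper's: both reduce to the two identities $\skp{c,g}^m=\skp{c,g^m}=1$ and $\skp{c,g}^{k}=\skp{c^{k},g}=1$ (the paper uses $k=\abs{K/L}$ directly) and conclude by coprimality. The extra care you take in deriving bilinearity in the second slot and in checking the centralizer inclusion is exactly what the paper's one-line computation leaves implicit.
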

\begin{proof}
  For $c\in \C_{K/L}(g)$, we have
  $\skp{c,g}^{\abs{K/L}}= \skp{c^{\abs{K/L}},g}=1
    = \skp{c, g^m}= \skp{c,g}^m$. Thus $\skp{c,g}=1$.
\end{proof}

\section{Character fives}\label{sec:fives}
First we remind the reader of some easy and well known equivalent
conditions for a character to be fully ramified.
\begin{lemma}\label{l:fullyramifiedcond}
 Let $L\nteq K$, $\phi\in \Irr L$ and $\theta\in \Irr(K\mid \phi) $.
 Then the following assertions are equivalent:
 \begin{enumequiv}
 \item $\theta_L= n\phi$ with $n^2 = \abs{K:L}$.
 \item $\phi^K = n\theta $ with $n^2 = \abs{K:L}$.
 \item $\phi$ is invariant in $K$ and
              $\Irr(K\mid \phi)=\{\theta\}$.
 \item $\phi$ is invariant in $K$ and $\theta$ vanishes outside
        $L$.
 \item $e_{\theta}= e_{\phi}$.
 \item \label{ie:fr_onlygood}$\phi$ is invariant in $K$ and $\{L\}$ is the only
         $\phi$\nbd good\index{Good elements} conjugacy class of $K/L$.
 \end{enumequiv}
\end{lemma}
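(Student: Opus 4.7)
The plan is to establish the equivalences among (i)--(v) by standard Clifford theory and Frobenius reciprocity, and then to deduce (vi) from (iii) via Lemma~\ref{l:gallnrirr}.

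First I would establish (i) $\Leftrightarrow$ (iii). Assuming (iii), Clifford's theorem yields $\theta_L = n\phi$ because $\phi$ is $K$-invariant and is a constituent of $\theta_L$; since $\theta$ is the unique irreducible character above $\phi$, Frobenius reciprocity gives $\phi^K = n\theta$, and comparing degrees forces $n^2 = \abs{K:L}$. For the converse, if $\theta_L = n\phi$ with $n^2 = \abs{K:L}$, then the Clifford $K$-orbit of $\phi$ has length one, so $\phi$ is $K$-invariant; writing $\phi^K = n\theta + \mu$ with $\mu$ a character above $\phi$, the degree identity $\phi(1)\abs{K:L} = n^2\phi(1) + \mu(1)$ forces $\mu = 0$, giving (iii). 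The equivalence (i) $\Leftrightarrow$ (ii) then falls out by symmetry, since $[\phi^K,\theta] = [\phi,\theta_L]$ by Frobenius.

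Next I would handle (iii) $\Leftrightarrow$ (iv) and (iii) $\Leftrightarrow$ (v). For (iii) $\Rightarrow$ (iv), since $L \nteq K$ and $\phi$ is supported on $L$, the induced character $\phi^K = n\theta$ vanishes outside $L$, and hence so does $\theta$. For (iv) $\Rightarrow$ (i), Clifford applied to the invariant $\phi$ gives $\theta_L = n\phi$, and then the calculation
\[ 1 = [\theta,\theta]_K = \frac{1}{\abs{K}}\sum_{l\in L}\abs{\theta(l)}^2
     = \frac{[\theta_L,\theta_L]_L}{\abs{K:L}}
     = \frac{n^2}{\abs{K:L}} \]
closes the loop. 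For (iii) $\Leftrightarrow$ (v), I would observe that $e_\phi$ is central in $\compl K$ iff $\phi$ is $K$-invariant (since $k e_\phi k^{-1} = e_{\phi^k}$), and that in the invariant case the decomposition $e_\phi = \sum_{\theta' \in \Irr(K\mid\phi)} e_{\theta'}$ of central idempotents in $\compl K$ makes $e_\phi = e_\theta$ equivalent to $\Irr(K\mid\phi) = \{\theta\}$.

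Finally, (iii) $\Leftrightarrow$ (vi) is immediate from Lemma~\ref{l:gallnrirr}: under the invariance hypothesis on $\phi$, $\abs{\Irr(K\mid\phi)}$ equals the number of $\phi$-good conjugacy classes of $K/L$, and the class $\{L\}$ is always good (by Lemma~\ref{l:skp}(iii), $\skp{c,g}_{\phi} = 1$ whenever $g \in L$). No step presents a real obstacle; the only mildly subtle point is the central-idempotent decomposition in (v), which rests on the Schur-type observation that, since $e_\phi$ is $K$-invariant, it acts as $0$ or as the identity on each irreducible $\compl K$-module $V$, according to whether $\phi$ occurs in $V_L$.
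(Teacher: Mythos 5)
Your proposal is correct and matches the paper's approach: the only equivalence the paper argues explicitly is (iii)$\Leftrightarrow$(vi) via Lemma~\ref{l:gallnrirr} (noting that the class $\{L\}$ is always good), exactly as you do, while the remaining equivalences are dismissed as well known. Your filled-in details for (i)--(v) are the standard Clifford/Frobenius and central-idempotent arguments the paper is alluding to, and they are all sound.
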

\begin{proof}
  By Lemma~\ref{l:gallnrirr},
  $\abs{\Irr(K\mid \phi)}$ equals
  the number of good conjugacy classes of $K/L$, when
  $\phi$ is invariant in $K$. This yields the
  equivalence of the third and the sixth condition.
  The equivalence of the other conditions is well known and easy
  to establish.
\end{proof}
If $\phi$ has these properties,
we say that $\phi$ is fully ramified in $K$.
We remark that Howlett and
Isaacs~\cite{hi82} have proved,
using the classification of finite simple groups,
that $K/L$ is solvable if some $\phi\in \Irr L$ is fully ramified
in $K$.

An interesting consequence of the last condition of the lemma is
the following:
\begin{cor}\label{c:frval}
  Suppose $\phi\in \Irr L$ is fully ramified in $K$, where
  $K/L$ is abelian.
  Let $e$ be the exponent of $K/L$. Then
  $\rats(\phi)$ contains a primitive $e$-th root of unity.
\end{cor}
\begin{proof}
  Since $K/L$ is abelian, the form
  $\eskp_{\phi} $
  is defined on all of $K/L \times K/L$.
  The last condition of the lemma implies that
  $\eskp_{\phi} $ is a nondegenerate
  alternating form on $K/L \times K/L$. Since it has values in
  $\rats(\phi)^{*}$, this enforces $\rats(\phi)$ to contain a
  primitive $e$-th root of unity.
\end{proof}
\begin{remark*}
  Corollary~\ref{c:frval} is false if $K/L$ is not abelian:
  Namely, let $C$ be a cyclic group of order $p^{a+1}$ and
  let $P$ be the Sylow $p$-subgroup of $\Aut C$.
  Then $\abs{P}=p^a$. Let $K$ be the semidirect product of $P$ and
  $C$. Then it is not difficult to see that
  $L=\Z(K)\subseteq P$ has order $p$ and that the faithful characters of
  $L$ are fully ramified in $K$. Clearly, $K/L$ has exponent
  $p^a$.

  In general, if $p$ is a prime dividing $\abs{K/L}$, then
  $\rats(\phi)$ must contain the $p$-th roots of unity, and that
  is all that can be said.
\end{remark*}
\begin{lemma}\label{l:s-einfach}
  Let $\phi\in \Irr L$ be fully ramified in $K$, where
  $L\nteq K$.
  Let $\crp{F}$ be a field containing $\rats(\phi)$.
  Then $S = (\crp{F} K e_{\phi})^L$
  is central simple with dimension
  $\abs{K/L}$ over $\crp{F}$.
\end{lemma}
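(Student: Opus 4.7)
The plan is to apply the double centralizer theorem to the subalgebra $A_1:=\crp{F}Le_{\phi}$ inside $A:=\crp{F}Ke_{\phi}$. Because $\phi$ is invariant in $K$, the idempotent $e_{\phi}$ is central in $\crp{F}K$, so $S=\C_A(L)$ coincides with $\C_A(A_1)$: centralising $A_1$ is the same as centralising $L$, since $e_{\phi}$ already lies in the centre.

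The task then splits into showing that both $A_1$ and $A$ are central simple over $\crp{F}$, with $\crp{F}$\nbd dimensions $\phi(1)^{2}$ and $[K:L]\phi(1)^{2}$ respectively. For $A_1$ this is immediate from $\crp{F}\supseteq\rats(\phi)$: then $e_{\phi}$ is central primitive in $\crp{F}L$, and $A_1$ is simple with centre $\crp{F}(\phi)=\crp{F}$ and $\crp{F}$\nbd dimension $\phi(1)^{2}$. For $A$, I would use the equality $e_{\phi}=e_{\theta}$ provided by condition~(v) of Lemma~\ref{l:fullyramifiedcond} to rewrite $A=\crp{F}Ke_{\theta}$. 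The key verification is then $\rats(\theta)\subseteq \crp{F}$: by condition~(iv) the character $\theta$ vanishes off $L$, and on $L$ it equals $n\phi$ with $n$ a positive integer, so all its values lie in $\rats(\phi)\subseteq \crp{F}$. Consequently $e_{\theta}$ is central primitive in $\crp{F}K$ as well, making $A$ central simple over $\crp{F}$ of $\crp{F}$\nbd dimension $\theta(1)^{2}=n^{2}\phi(1)^{2}=[K:L]\phi(1)^{2}$.

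The double centralizer theorem now gives that $\C_A(A_1)$ is central simple over $\crp{F}$ and satisfies $\dim_{\crp{F}}A=\dim_{\crp{F}}A_1\cdot \dim_{\crp{F}}\C_A(A_1)$, whence $\dim_{\crp{F}}S=[K:L]$. The only point that requires care is the passage from primitivity of $e_{\phi}$ in $\crp{F}L$ to primitivity of $e_{\theta}=e_{\phi}$ in $\crp{F}K$; this is the main obstacle, and it is cleared by the value-field computation for $\theta$ that uses the full-ramification hypothesis in the form of condition~(iv).
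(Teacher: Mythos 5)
Your argument is correct, but it takes a genuinely different route from the paper. The paper observes that $S$ is a twisted group algebra of $K/L$ over $\crp{F}$: each graded component $S_x$ for $x\in K/L$ is one-dimensional and spanned by a unit, which gives $\dim_{\crp{F}}S=\abs{K/L}$ at once, and simplicity (and centrality) is then read off from the fact that $\Irr(K\mid\phi)$ is a singleton, since the simple components of $(\crp{F}Ke_{\phi})^L$ correspond to the irreducible characters of $K$ above $\phi$. You instead invoke the double centralizer theorem for the simple subalgebra $\crp{F}Le_{\phi}$ of $\crp{F}Ke_{\phi}$, which requires the extra (correct) verification that $\rats(\theta)\subseteq\rats(\phi)\subseteq\crp{F}$ — cleanly deduced from the vanishing of $\theta$ off $L$ and $\theta_L=n\phi$ — so that $e_{\theta}=e_{\phi}$ is central primitive in $\crp{F}K$ and $\crp{F}Ke_{\phi}$ is central simple over $\crp{F}$ of dimension $\abs{K:L}\phi(1)^2$. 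The identification $\C_A(A_1)=\C_A(L)$ and the statement that $\Z(\C_A(A_1))=\Z(A_1)=\crp{F}$ are both legitimate, so centrality, simplicity and the dimension count all come out of one application of the double centralizer theorem. What your approach buys is a self-contained deduction from standard central simple algebra theory that does not lean on the graded-unit discussion preceding Lemma~\ref{l:twisted0}; what the paper's approach buys is brevity and the explicit twisted group algebra structure of $S$, which is used repeatedly later (e.g.\ in Lemma~\ref{l:admtripclosed} and Proposition~\ref{p:psiformel}).
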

\begin{proof}
  $S$ is a twisted group algebra
  of $K/L$ over $\crp{F}$
  (see the discussion before Lemma~\ref{l:twisted0}),
  that is, $S= \bigoplus_{x\in K/L} \crp{F}s_x$ and
  $\dim_{\crp{F}}S = \abs{K/L}$.
  Since $\Irr(K\mid \phi)$ contains only one irreducible
  character, $S$ must be simple.
\end{proof}
The following definition describes the situation we will be
concerned with in this paper:
\begin{defi}\label{d:cfive}
  A \emph{character five} is a  quintuple
  $(G,K,L,\theta, \phi)$ where
  $G$ is a finite group, $L\leq K$ are normal subgroups of $G$,
  and $\phi\in \Irr L$ is fully ramified in $K$,
  and $\{\theta\}=\Irr(K\mid \phi)$.
  Moreover, we assume that $\phi$
  is invariant in $G$.
  An abelian (nilpotent,
  solvable) character five is a character
  five $(G,K,L,\theta,\phi)$ with
  $K/L$ abelian
  (nilpotent, solvable\footnote{Of course,
                       by the before-mentioned result of
                       Howlett and Isaacs~\cite{hi82},
                       every character five is solvable.
                       }).
\end{defi}
The term \emph{character five} is due to Isaacs~\cite{i73}, but
observe that he defines a character five to be abelian, and he
only considers character fives where $K/L$ is abelian.
Since some of our results are valid when $K/L$ is not abelian, we
drop the hypothesis of commutativity of $K/L$ from the
definition of a character five.
We hope
that this change of terminology will not cause too much confusion.

\section{Character correspondences for character fives}
\label{sec:corr}
Now let $(G,K,L,\theta, \phi)$ be a character five and
assume there exists a subgroup $H$
such that $G=HK$ and $L=H\cap K$.
Then $G/K\iso H/L$ canonically.
Let
$\crp{F}$ be a field containing the values of $\phi$ (and thus of $\theta$).
We now review the theory of ``magic
representations''~\cite{ladisch10pre}, that allows to construct
an isomorphism
$\eps\colon \mat_n(\crp{F}He_{\phi}) \to \crp{F}Ge_{\phi}$.

Let $S=(\crp{F}Ke_{\phi})^L$.
By Lemma~\ref{l:s-einfach},
$S$ is central simple.
If $\crp{F}$ is big enough
(for example, if $\theta$ and $\phi$ are afforded by
 $\crp{F}$\nbd representations),
 then $S\iso\mat_n(\crp{F})$.
Assume this and let $E=\{E_{ij}\mid i,j=1,\dotsc,n\}$ be a full set of matrix
units in $S$.
(By this, we mean that $E_{ij}E_{kl}= \delta_{jk}E_{il}$ and
 $1_S= \sum_{i=1}^n E_{ii}$.)
Set $A=\crp{F}Ge_{\phi}$.
By a well known ring theoretic result~\cite[17.4-17.6]{lamMR}, we have that
$A\iso\mat_n(C)$, where $C=\C_A(E)$.
It is clear that $S$ as $\crp{F}$\nbd space is generated by $E$, and thus
$\C_A(E)=\C_A(S)$.

Write $A=\bigoplus_{x\in G/K}A_x$
with $A_{Kg}= \crp{F}Kge_{\phi}$.
This defines a grading of $A$.
The subalgebra $C$ inherits that grading:
for $C_x=\C_{A_x}(S)$, we have
$C=\bigoplus_{x\in G/K}C_x$.
The above isomorphism is one of graded algebras:
\begin{lemma}\label{l:s-matrixr}
  When $S\iso\mat_n(\crp{F})$, then
  $\crp{F}Ge_{\phi}\iso \mat_n(C)$
  as $G/K$-graded algebras, where
  $C=\C_{\crp{F}Ge_{\phi}}(S)$.
\end{lemma}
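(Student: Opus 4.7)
The plan is to use the matrix units in $S$ to construct the isomorphism explicitly, and then to show that it respects the grading by $G/K$. Since $S \iso \mat_n(\crp{F})$, fix a full set of matrix units $\{E_{ij}\}$ in $S$. The crucial observation is that $S \subseteq A_K = \crp{F}Ke_{\phi}$, i.e.\ $S$ lies entirely in the identity (degree-$1$) component of the $G/K$\nbd grading on $A = \crp{F}Ge_{\phi}$. The ungraded part of the claim then follows immediately from the standard ring-theoretic result cited in the excerpt (\cite{lamMR}, 17.4--17.6): the map
\[
 \Phi\colon \mat_n(C)\to A,\qquad (c_{ij})\mapsto \sum_{i,j} c_{ij}E_{ij},
\]
is an isomorphism of $\crp{F}$\nbd algebras, where (as already noted) $C=\C_A(S)=\C_A(E)$.

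Next I would verify that $C$ inherits a $G/K$\nbd grading from $A$, i.e.\ that $C = \bigoplus_{x\in G/K} C_x$ with $C_x = \C_{A_x}(S) = A_x \cap C$. This is where the placement $S \subseteq A_1$ pays off: for any $s\in S$ and any $a = \sum_x a_x \in A$ with $a_x\in A_x$, multiplication by $s$ preserves the grading, so $sa - as = \sum_x (sa_x - a_x s)$ with each summand in $A_x$. Hence $a$ centralizes $S$ if and only if each $a_x$ does, which gives the desired decomposition of $C$.

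Finally I would grade $\mat_n(C)$ by declaring $\mat_n(C)_x = \mat_n(C_x)$ and check that $\Phi$ is homogeneous. For $(c_{ij})\in \mat_n(C_x)$, each $c_{ij} \in C_x \subseteq A_x$ and $E_{ij}\in S \subseteq A_1$, so $c_{ij} E_{ij} \in A_x A_1 \subseteq A_x$; therefore $\Phi(\mat_n(C_x))\subseteq A_x$. Conversely, any $a_x \in A_x$ lies in $\Phi(\mat_n(C))$, and writing $\Phi^{-1}(a_x) = (c_{ij})$ via the standard formulas (essentially $c_{ij} = \sum_k E_{ki}a_x E_{jk}$, which lies in $A_x$ because the $E_{\bullet\bullet}$ are in $A_1$) shows each $c_{ij}\in C\cap A_x = C_x$. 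So $\Phi$ restricts to a bijection $\mat_n(C_x)\to A_x$, and is an isomorphism of $G/K$\nbd graded algebras.

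There is no serious obstacle here: the statement is essentially a graded version of the familiar ``algebra containing matrix units'' decomposition, and everything hinges on the single fact that $S\subseteq A_1$. The only point that requires a brief check is that the decomposition $C = \bigoplus C_x$ really is honestly inherited from $A$, and this is immediate from homogeneity of multiplication together with $S\subseteq A_1$.
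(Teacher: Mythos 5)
Your proposal is correct and takes essentially the same route as the paper: the paper also invokes the standard matrix-unit decomposition $A\iso\mat_n(\C_A(E))$ from Lam and then observes that $C$ inherits the $G/K$-grading and that the isomorphism is graded. You have merely written out the details (in particular the key point $S\subseteq A_1$ and the homogeneity check via $c_{ij}=\sum_k E_{ki}a_xE_{jk}$) that the paper leaves implicit.
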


The group $G$, and even $G/L$ acts on $S= (\crp{F}Ke_{\phi})^L$ by conjugation.
Let $x\in G/L$.
Since $S$ is central simple (by Lemma~\ref{l:s-einfach}),
the Skolem-Noether theorem yields that
there is $\sigma(x)\in S^*$ with
$s^x=s^{\sigma(x)}$ for all $s\in S$. Every such choice of
$\sigma(x)$'s yields a projective representation
$\sigma\colon G/L\to S$. It is unique up to multiplication with a
map $G/L\to \crp{F}^{*}$. We sometimes speak of ``the'' projective
representation associated with the character five
$(G,K,L,\theta,\phi)$.
Let us recall the definition of a ``magic
representation''~\cite{ladisch10pre}:
\begin{defi}\label{d:magic}
  Let $(G,K,L,\theta, \phi)$ be a character five
  and $\crp{F}\supseteq \rats(\phi)$.
  A \emph{magic representation} is a map
  $\sigma \colon H/L \to S = (\crp{F} Ke_{\phi})^L$, where
  $H/L$ is a complement of $K/L$ in $G/L$,
  such that
  \begin{enums}
  \item $\sigma(x)\in S^*$,
  \item $\sigma(xy)=\sigma(x)\sigma(y)$ for all $x$,
         $y\in H/L$ and
  \item $s^x = s^{\sigma(x)}$ for all $s\in S$ and $x\in H/L$.
  \end{enums}
  The character of a magic representation, that is the function
  $\psi \colon H/L \to \crp{F} $ with $\psi(x)= \tr_{S/\crp{F} }(\sigma(x))$, is called
  a magic character.
\end{defi}
Note that a magic representation is determined by the definition
up to multiplication with a linear character of $H/L$.
\begin{thm}\label{t:centiso}
  Let
  $\sigma \colon H/L \to S$ be a magic representation.
   Then the linear map
  \[ \kappa\colon \crp{F} H  \to C=\C_{\crp{F} Ge_{\phi}}(S),\quad
     \text{defined by} \quad
     h \mapsto h \sigma(Lh)^{-1} \text{ for } h\in H ,\]
  is an algebra-homomorphism and induces an isomorphism
  $ \crp{F} H e_{\phi} \iso C$.
  The isomorphism respects the $H/L$-grading of
  $C$ and $\crp{F}He_{\phi}$.
\end{thm}
\begin{proof}
  \cite[Theorem~3.8]{ladisch10pre}
\end{proof}
The reader should note that $\kappa$ restricted to $\crp{F}L$ is
just multiplication with $e_{\phi}$, since
$\sigma(1)=e_{\phi}$.
Using this, the proof of
Theorem~\ref{t:centiso} is straightforward.
\begin{cor}
  If there is a
  magic representation for the character five
  $(G,K,L,\theta, \phi)$ and if
  $(\crp{F}Ke_{\phi})^L\iso \mat_n(\crp{F})$, then
  $\crp{F}Ge_{\phi}\iso \mat_n(\crp{F}He_{\phi})$.
\end{cor}
\begin{thm}\label{t:corr}
  Let $(G,K,L,\theta,\phi)$ be a character five
  such that $S=(\crp{F}Ke_{\phi})^L\iso\mat_n(\crp{F})$,
  where $\rats(\phi)\leq \crp{F}$.
  Every magic representation
  $\sigma \colon H/L \to S^* $ determines
   linear isometries
  $\iota= \iota(\sigma)$ from
  $\compl[\Irr(U\mid \theta)]$
  to $\compl[\Irr(U\cap H\mid \phi)]$
  for all $U$ with $K\leq U\leq G$.
  If $E=\{E_{ij}\mid i, j=1,\dotsc, n\}$ is a
  full set of matrix units in $S$, then
  $\iota(\sigma)$ can be computed by
  \begin{equation}\label{equ:iota}
  \chi^{\iota(\sigma)}(h)
    = \chi(E_{11}\sigma(Lh)^{-1} h).
  \end{equation}
  Write $\iota$ also for the union of these isometries.
  Then $\iota$ commutes with restriction, induction and
  conjugation of class functions, with multiplication by class
  functions of $G/K\iso H/L$,
   and with field automorphisms
  fixing $\crp{F}$,
  and $\iota$ preserves Schur indices of
  irreducible characters over $\crp{F}$.

  Let $\psi$ be the character of $\sigma$,
  and $\chi \in \compl[\Irr( G\mid \theta)]$.
  Then
  \begin{equation}\label{equ:psi}
    \chi_H = \psi \chi^{\iota}
     \quad \text{and}\quad
     (\chi^{\iota})^G = \overline{\psi}\chi.
  \end{equation}
\end{thm}
\begin{proof}
  The theorem is a special case of Theorem~4.3 in \cite{ladisch10pre}.
\end{proof}
\begin{remark}\label{r:corrdet}
  Let $\pi$ be the set of  prime divisors of $\abs{K/L}$.
  If there is any magic representation,
  then there is a magic representation $\sigma$ such that
  $\det \sigma$ has order a $\pi$\nbd number.
\end{remark}
\begin{proof}
  \cite[Remark~4.4]{ladisch10pre}
\end{proof}

\section{Magic representations for character fives}
\label{sec:magic}
\begin{prop}\label{p:psiformel}
Let $L\nteq K$ and $\phi\in \Irr L$ be fully ramified in $K$.
Let $a$ be an element of some group acting on $K$ such
that $\phi^a = \phi$
and choose $\sigma \in S= (\crp{F}K e_{\phi})^L$ with
$s^a = s^{\sigma}$ for all $s\in S$.
Then
\begin{align*}
  \tr(\sigma^{-1})\tr(\sigma)
         &= \sum_{ x\in \C_{K/L}(a)} \skp{ x, a }_{\phi}
         = \begin{cases}
             \abs{\C_{K/L}(a)}  &\text{if $a$ is $K$\nbd good,}\\
             0                  &\text{otherwise.}
           \end{cases}
\end{align*}
\end{prop}
\begin{proof}
  The second equation is clear. Without loss of generality, we can
  assume that $S$ splits, that is $S\iso \mat_n(\crp{F})$.
  The $\crp{F}$\nbd linear map $\kappa$ from $S$ to $S$ sending
  $s$ to $s^a=s^{\sigma}$ has trace
  $\tr(\sigma^{-1})\tr(\sigma)$,
  as an easy computation with matrix units shows.
  Now we use as basis of $S$ a set of graded units
  $s_x\in S\cap \crp{F}Lx$,
  $x\in K/L$.
  If $x\notin \C_{K/L}(a)$ then $(s_x)^a\in \crp{F}s_{x^a}$ is a multiple of another
  basis element and so it
   contributes nothing to the
  trace of $\kappa$. If $x\in \C_{K/L}(a)$,
  then  $s_x^a = \skp{x,a}s_x$ by
  the remark following Lemma~\ref{l:twisted0}.
  The result now follows.
\end{proof}
Applying the proposition to a magic representation
of a character five
yields the absolute value of a magic character.
This generalizes a result of Isaacs~\cite[Theorem~3.5]{i73}.
\begin{cor}\label{c:psiformel}
  If $(G,K,L,\theta,\phi)$ is a character five
  and $\psi$ a magic character of this character five,
  defined on a complement $H/L$ of $K/L$ in $G/L$, then for
  $h\in H$
  \[ \abs{\psi(h)}^2 =
     \begin{cases}
       \abs{\C_{K/L}(h)}   &\text{if $h$ is $K$\nbd good,}\\
       0                   &\text{otherwise.}
     \end{cases}\]
\end{cor}

Next we will show that there is, if the field is big enough,
a finite group $P$, such that
$S=(\crp{F}K e_{\phi})^L \iso \crp{F}Pe_{\mu}$, where
$\mu\in \Lin (\Z(P))$. This follows of course at once from
the theory of projective representations,
but we need to take into account
the action of $G$ on $S$ and so we review this in detail.

Remember that $S$ has a natural grading
$S=\bigoplus_{x\in K/L}S_x$
by the group $K/L$, and that
each component has the form $S_x =\crp{F}s_x$, where
$s_x$ is a unit of $S$.
In particular, $S_1=\crp{F}e_{\phi}=\Z(S)$.
Let
$\Omega= \bigdcup_{x\in K/L} (S_x\cap S^*) = \bigdcup_{x\in K/L} \crp{F}^{*}s_x$
be the set of graded units of $S$.
Then $\Omega$ is a central extension of $\crp{F}^*$ by $K/L$:
\[ \begin{CD}
        1 @>>> \crp{F}^{*} @>>> \Omega @>{\eps}>> K/L @>>> 1
     \end{CD}.
  \]
Following Dade~\cite{dade70a, dade70b},
we call this central extension
the Clifford extension associated with $(K,L,\phi)$ over
 $\crp{F}$.
The epimorphism $\eps$ sends elements of $S_x$ to $x\in K/L$.
Note that for $u$, $v\in \Omega$ with
$[u,v]\in \Ker \eps =\crp{F}^*e_{\phi}$ we have
$ [u,v] = \skp{u^{\eps}, v^{\eps}}_{\phi}$: this is
Lemma~\ref{l:twisted0}.
In particular, if $\phi$ is fully ramified in $K$, then
$\Ker\eps = \Z(\Omega)$.
\par
The group $G$ (even $G/L$) acts on $\Omega$ and centralizes
$\Ker\eps\iso\crp{F}^{*}$.
Obviously, $\Omega$ generates $S$
(as $\crp{F}$\nbd algebra, even as ring), and so we might
realize $S$ as factor algebra of the group algebra
$\crp{F}\Omega$. Of course, $\Omega$ is infinite.
\begin{lemma}\label{l:admtripclosed}
  Hold the above notation and let
  $\crp{F}$ be algebraically closed. Set
  \[ P = \erz{ s\in \Omega \mid
                      \erz{s} \cap \Z(\Omega)= \{e_{\phi}\} \:}. \]
  Then $P$ has the following properties:
  \begin{enums}
  \item \label{i:adm_sur} $P\Z(S)^* = \Omega$
       (equivalently, the restriction of $\eps$ to $P$ is
       surjective),
  \item \label{i:adm_inv}$P^g=P$ for all $g\in G$,
  \item \label{i:adm_fin}$\abs{P}$ is finite and divides $\abs{K/L}^2$.
  \end{enums}
\end{lemma}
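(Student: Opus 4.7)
For (i), I would construct an explicit lift in $P$ for each $x\in K/L$. Given the order $n_x$ of $x$, start from any graded unit $s_x\in S_x$, and observe that $s_x^{n_x}\in S_1=\crp{F}e_\phi=\Z(\Omega)$, say $s_x^{n_x}=\mu_x e_\phi$ with $\mu_x\in\crp{F}^*$. Algebraic closedness of $\crp{F}$ lets me choose $\lambda_x\in\crp{F}^*$ with $\lambda_x^{n_x}\mu_x=1$; then $t_x:=\lambda_x s_x$ satisfies $t_x^{n_x}=e_\phi$. Any power $t_x^k$ landing in $\Z(\Omega)=\Ker\eps$ forces $n_x\mid k$, in which case $t_x^k=e_\phi$; so $\erz{t_x}\cap\Z(\Omega)=\{e_\phi\}$ and $t_x\in P$. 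As $\eps(t_x)=x$ was arbitrary, $\eps|_P$ is surjective. For (ii), $G$ acts on $\Omega$ by conjugation (a well-defined action, since $G$ normalizes $L$ and $K$ and fixes $e_\phi$) and fixes $\Z(\Omega)=\crp{F}^*e_\phi$ pointwise. The defining condition of a generator of $P$ is therefore $G$-invariant, so $G$ permutes the generators, and $P^g=P$.

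For (iii), the first observation is that $\eps|_P\colon P\to K/L$ is surjective with kernel $N:=P\cap\Z(\Omega)=P\cap\crp{F}^*e_\phi$, whence $\abs{P}=\abs{N}\cdot\abs{K/L}$, and the assertion reduces to: $N$ is finite of order dividing $m:=\abs{K/L}$. To get finiteness of $P$, I would note that $P$ is generated by the finitely many $t_x$ above together with the roots of unity $\mu_{n_x}$ (each inside $P$: if $\zeta\in\mu_{n_x}$ then $\zeta t_x\in P$ by the argument used for (i), so $\zeta=(\zeta t_x)t_x^{-1}\in P$), and all these generators have bounded finite order. Since $N\leq\Z(P)$ and $P/N\iso K/L$ is finite, Schur's theorem delivers finiteness of $[P,P]$; meanwhile $P/[P,P]$ is finitely generated abelian and torsion, hence finite, and so is $P$.

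The hard part is the bound $\abs{N}\mid m$. Here the plan is to exploit the standard fact that $H^2(K/L,\crp{F}^*)$ is annihilated by $m$. Writing $\alpha(x,y)\in\crp{F}^*$ for the 2-cocycle defined by $t_xt_y=\alpha(x,y)t_{xy}$, one has $\alpha^m=\partial\gamma$ for some cochain $\gamma\colon K/L\to\crp{F}^*$, and algebraic closedness lets me extract $m$-th roots $\delta(x)^m=\gamma(x)^{-1}$, producing a cohomologous cocycle $\beta:=\alpha\cdot\partial\delta$ with values in $\mu_m(\crp{F})$. The main obstacle I expect is to reconcile this cohomological normalization with the intrinsic normalization $t_x^{n_x}=e_\phi$ forced by the definition of $P$: carefully tracking the scaling $t'_x=\delta(x)t_x$ through the $n_x$-th power relations should show that every scalar appearing in $N$ is an $m$-th root of unity, giving $N\subseteq\mu_m(\crp{F})$ and $\abs{N}\mid m$.
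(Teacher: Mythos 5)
Parts~(i) and~(ii) of your plan coincide with the paper's proof: rescale a graded unit $s_x\in S_x$ by an $\ord(x)$\nobreakdash-th root of the scalar $\mu_x$ to obtain a generator of $P$ mapping onto $x$, and observe that the defining condition for the generators of $P$ is preserved by $G$ because $G$ centralizes $\Z(\Omega)=\crp{F}^*e_{\phi}$. Your finiteness argument (Schur's theorem applied to the central subgroup $N=P\cap\crp{F}^*e_{\phi}$ of finite index, plus $P/[P,P]$ finitely generated torsion abelian) is valid, if more roundabout than necessary once $\abs{N}$ is bounded.

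The genuine gap is the divisibility $\abs{N}\mid\abs{K/L}$, and it sits exactly where you flagged it. That $H^2(K/L,\crp{F}^*)$ is annihilated by $m=\abs{K/L}$ controls the cohomology \emph{class} of $\alpha$, not the values of the particular cocycle attached to your normalized lifts $t_x$. If you pass to $t_x'=\delta(x)t_x$ with cocycle $\beta$ taking values in $m$\nobreakdash-th roots of unity, then comparing $(t_x')^{n_x}=\delta(x)^{n_x}e_{\phi}$ with the expression of $(t_x')^{n_x}$ as a product of $n_x-1$ values of $\beta$ times $e_{\phi}$ only yields $\delta(x)^{n_x m}=1$; hence $\alpha(x,y)=\beta(x,y)\,\delta(xy)\delta(x)^{-1}\delta(y)^{-1}$ is a root of unity of order dividing $m\cdot\exp(K/L)$ rather than $m$, and tracing this through gives only $\abs{P}\mid\abs{K/L}^2\exp(K/L)$. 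What is needed, and what the paper supplies, is a trivializing cochain that is \emph{compatible with the normalization} $t_x^{n_x}=e_{\phi}$, and it comes from the determinant in $S\iso\mat_n(\crp{F})$ rather than from abstract cohomology: put $\delta(x)=\det(t_x)$; then $\delta(x)^{n_x}=\det(t_x^{n_x})=1$ automatically, so $\delta(x)^{n}=1$ (the paper uses that $\ord(x)$ divides $n$), and taking determinants in $t_xt_y=\alpha(x,y)t_{xy}$ gives $\alpha(x,y)^{n}=\delta(x)\delta(y)\delta(xy)^{-1}$, whence $\alpha(x,y)^{n^2}=1$. Since $N$ is generated by the $\exp(K/L)$\nobreakdash-th roots of unity and the values of $\alpha$, all of which are $\abs{K/L}$\nobreakdash-th roots of unity, $\abs{N}$ divides $\abs{K/L}$ and $\abs{P}=\abs{N}\cdot\abs{K/L}$ divides $\abs{K/L}^2$. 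Replacing your cohomological step by this determinant computation repairs the argument.
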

\begin{proof}
  Let $x\in K/L$ and $s\in S_x$.
  Then $s^{\ord(x)}\in S_1 =\crp{F}e_{\phi}$,
  so that $s^{\ord(x)}=\lambda e_{\phi}$. Since $\crp{F}$ is
  algebraically closed, there is an $\ord(x)$\nobreakdash-th root,
  $\alpha$, of $\lambda$ in $\crp{F}$.
  Thus  $\alpha^{-1}s$ has
  indeed order $\ord(x)$.
  This holds for any $x\in K/L$ and thus $P$ covers $K/L$.
  It is also clear that $P$ is invariant under $G$.

  To see that $P$ is finite, choose $s_x\in S_x$
  with $\ord(s_x)=\ord(x)$ for any
  $x\in K/L$. Then $P$ is generated by the $s_x$ and
  the $\exp(K/L)$-th roots of unity in $\crp{F}$.
  We have $s_xs_y = \alpha(x,y)s_{xy}$ for some
  $\alpha(x,y)\in \crp{F}$.
  Let $\delta(x)$ be the determinant of
  $s_x$ (as element of $S\iso \mat_n(\crp{F})$).
  Then $\delta(x)^{n}=1$ since $\ord(s_x)=\ord(x)$ divides
  $n=\sqrt{\abs{K/L}}$.
  Since $\delta(x)\delta(y)=\alpha(x,y)^n \delta(xy)$, it follows
  that $\alpha(x,y)^{n^2}=1$.
  But $P\cap\crp{F}^*$ is generated by the values of $\alpha$ and
  thus is finite of order dividing $\abs{K/L}$.
  This finishes the proof.
\end{proof}
\begin{defi}
  Let $(G,K,L,\theta,\phi)$ be a character five
  and $\Omega$ the set of graded units of
  $S=(\crp{F}Ke_{\phi})^L$, where $\crp{F}$ is some field
  containing the values of $\phi$.
  Then an \emph{admissible subgroup} for the character five
  $(G,K,L,\theta, \phi)$ is a subgroup $P\leq \Omega$
  having Properties~\ref{i:adm_sur}--\ref{i:adm_fin}
  of Lemma~\ref{l:admtripclosed}.
\end{defi}
If $\crp{F}$ is not algebraically closed, than an admissible
subgroup may or may not exist. There are, however, other
conditions that ensure the existence of such a group
(see Lemma~\ref{l:pzmu} below).
\begin{lemma}\label{l:adm_mu}
  Let $P$ be an admissible subgroup
  of the character five $(G,K,L,\theta,\phi)$
  and set $Z= P\cap \Z(S)$.
  Let $\mu\colon Z\to \crp{F}^*$ be the restriction of
  $\omega_{\phi}$ (the central character associated with $\phi$)
  to $Z$.
  Then $P/Z\iso K/L$ canonically, $Z=\Z(P)$ and
  the inclusion $P\subset S$ induces an isomorphism
        $\crp{F}P e_{\mu}\iso S$ of $G$\nbd algebras.
\end{lemma}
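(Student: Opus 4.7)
The plan is to verify the three assertions in turn, using the admissibility properties~\ref{i:adm_sur}--\ref{i:adm_fin} together with Lemma~\ref{l:s-einfach}, which tells us $S$ is central simple of dimension $\abs{K/L}$ over $\crp{F}$, so that $\Z(S)=\crp{F}e_{\phi}$.

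First, $\eps|_P\colon P\to K/L$ is surjective by property~\ref{i:adm_sur}, since $\eps(\Omega)=K/L$, and its kernel is $P\cap\Ker\eps=P\cap\crp{F}^{*}e_{\phi}=P\cap\Z(S)^{*}=Z$. Hence $\eps|_P$ induces the canonical isomorphism $P/Z\iso K/L$. Next, $Z\subseteq\Z(P)$ is immediate from $Z\subseteq\Z(S)$. For the reverse inclusion, let $p\in\Z(P)$; since every element of $\Omega$ has the form $p'z$ with $p'\in P$ and $z\in\Z(S)^{*}$, and $p$ commutes with both factors, $p$ centralizes $\Omega$. As $\Omega=\bigdcup_x\crp{F}^{*}s_x$ spans $S$ over $\crp{F}$, this forces $p\in\Z(S)$, and thus $p\in Z$.

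For the third assertion, extend the inclusion $P\subset S$ to an $\crp{F}$-algebra homomorphism $\iota\colon\crp{F}P\to S$. Each $z\in Z\subseteq\crp{F}^{*}e_{\phi}$ equals $\mu(z)\cdot 1_S$ inside $S$, and therefore
\[ \iota(e_{\mu}) = \frac{1}{\abs{Z}}\sum_{z\in Z}\mu(z)^{-1}\mu(z)\cdot 1_S = 1_S. \]
Thus $\iota$ factors through $\crp{F}Pe_{\mu}$, giving an algebra map $\bar\iota\colon\crp{F}Pe_{\mu}\to S$. This map is surjective: for each $x\in K/L$ choose $p_x\in P$ with $\eps(p_x)=x$; then $\bar\iota(p_xe_{\mu})=p_x$ is a graded unit in $S_x$, and these elements span $S$. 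Conversely, since $ze_{\mu}=\mu(z)e_{\mu}$ for $z\in Z$, the space $\crp{F}Pe_{\mu}$ is spanned by one element per coset of $Z$, so $\dim_{\crp{F}}\crp{F}Pe_{\mu}\leq\abs{P/Z}=\abs{K/L}=\dim_{\crp{F}}S$. The surjection is therefore an isomorphism. Finally, $G$ acts trivially on $\Z(S)=\crp{F}e_{\phi}$, hence pointwise on $Z$, so $e_{\mu}$ is $G$-invariant; combined with property~\ref{i:adm_inv}, this shows that $\bar\iota$ is $G$-equivariant.

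The only really delicate step is the dimension bound for $\crp{F}Pe_{\mu}$, which relies on the identity $ze_{\mu}=\mu(z)e_{\mu}$ to collapse each coset of $Z$ to a one-dimensional subspace; beyond that, the proof is essentially careful bookkeeping on the inclusions $Z\subseteq P\subseteq\Omega\subseteq S$.
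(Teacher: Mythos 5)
Your proof is correct and follows essentially the same route as the paper's: identify $Z$ as the kernel of $P\to K/L$, deduce $Z=\Z(P)$ from $\Omega=P\Z(S)^{*}$, and then show the natural map $\crp{F}P\to S$ sends $e_{\mu}$ to $1_S$ and induces an isomorphism by comparing the surjection with the dimension count $\dim_{\crp{F}}\crp{F}Pe_{\mu}=\abs{P/Z}=\abs{K/L}=\dim_{\crp{F}}S$. Your explicit check of $G$-equivariance (via $G$ centralizing $\Z(S)$ and property~\ref{i:adm_inv}) is a detail the paper leaves implicit, but it is not a different method.
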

\begin{proof}
$Z$ is the kernel of $P\to K/L$ and thus $P/Z\iso K/L$.
That $Z=\Z(P)$ follows from $\Z(\Omega)= \crp{F}^*$ and
$\Omega= P \Z(\Omega)$.

Note that $z= ze_{\phi}=\mu(z)e_{\phi}$ for $z\in Z$.
The natural map
$\crp{F}P\to S$ sends the
central idempotent
$e_{\mu}=(1/\abs{Z})\sum_{z\in Z}\mu(z^{-1})z$ of $\crp{F}P$
to $e_{\phi}$, and sends all the other central idempotents of
$\crp{F}Z$ to zero.
As $\crp{F}P\to S$ is clearly surjective,
 $S$ is isomorphic to a factor ring of
  $\crp{F}Pe_{\mu}$, but since
  $\dim_{\crp{F}}S = \abs{K/L}=\abs{P/Z}=
  \dim_{\crp{F}}\crp{F}Pe_{\mu}$, it follows that
  $\crp{F}P e_{\mu}\iso S$.
\end{proof}
Let us illustrate how this can be used.
\begin{prop}\label{p:frcrat}
  Let $(G,K,L,\theta, \phi)$ be a character five
  and suppose that
  $\sigma\colon H/L \to S= (\compl K e_{\phi})^L$ is a magic
  representation.
  Suppose that the order of $x\in H/L$ is relatively prime to $\abs{K/L}$ and
  that $\det(\sigma(x))=1$.
  Then
  $\psi(x)=\tr(\sigma(x))$ is rational.
\end{prop}
\begin{proof}
  Let $P$ be the group defined in
  Lemma~\ref{l:admtripclosed} (over $\compl$) and
  $\mu \in \Lin \Z(P)$ be the linear character defined in
  Lemma~\ref{l:adm_mu}.

  Let $\crp{F}=\rats(\mu)$
  and let $T$ be the $\crp{F}$-subalgebra of
  $S=(\compl K e_{\phi})^L$ generated by $P$, so that
  $T\iso \crp{F}Pe_{\mu}$ naturally
  and $S= \compl T\iso \compl\tensor_{\crp{F}}T$.
  Then $T^x=T$.
  Since $\ord(x)$ is prime to $\dim T$ and $x$ acts on $T$,
  there is a unique
  element $\tau\in T$ such that the following
  conditions hold:
  \[ t^{\tau}= t^x \quad \text{for all $t\in T$}, \quad
     \tau^{\ord(x)}= 1_T \quad \text{and} \quad
     \det(\tau)=1.\]
  The first condition is then in fact true for all $t\in S$, and
  $\tau$ is unique in $S$ subject to these conditions.
  However, $\sigma(x)$ fulfills these conditions, so
  it follows that $\sigma(x)\in T$
   and thus
  $\psi(x)\in \crp{F}$.

  On the other hand, the eigenvalues of $\sigma(x)$ lie in a field $\crp{E}$
  obtained by
  adjoining a primitive $\ord(x)$\nbd th root of unity to $\rats$,
  and thus $\psi(x)\in \crp{E}$.
  Since $\crp{F}$ is obtained from $\rats$ by
  adjoining a $\abs{Z(P)}$\nbd th root of unity, where
  $\abs{Z(P)}$ divides $\abs{K/L}$ (see Lemma~\ref{l:admtripclosed}),
  and since $(\ord(x),\abs{K/L})=1$, we get
  $\crp{F} \cap \crp{E} = \rats$~\cite[Corollary on p.~204]{langAlg}.
  Thus $\psi(x)\in \rats$ as claimed.
\end{proof}

\section[Coprime character fives]{Digression: coprime character fives}
\label{sec:coprim}
\begin{prop}\label{p:coprim}
 Let $(G,K,L,\theta,\phi)$ be a character five
 such that $\abs{G:K}$ and $ \abs{K:L}$ are coprime.
 Then there is $H\leq G$ with $G=HK$ and $H\cap K =L$ and a
 unique magic
 character $\psi$ of $H/L$ of determinant $1$.
 This character  vanishes nowhere,
 so that the equation
 $\chi_H = \psi \xi$ defines an isometry between
 $\compl[\Irr(G\mid \theta)]$ and
 $\compl[\Irr(H\mid\phi)]$.
 Moreover, $\psi$ is rational.
\end{prop}
\begin{proof}
  By the Schur-Zassenhaus Theorem, there is a complement $H/L$ of
  $K/L$ in $G/L$.
  Since $(\abs{H/L},n)=1$, the action of $H/L$ on
  $S=(\rats(\phi)Ke_{\phi})^L$ lifts
  uniquely to a
  magic representation with determinant $1$.
  Let $\psi$ be its character.
   By Lemma~\ref{l:cpgood}, every $h\in H$ is $K$\nbd good
  and thus
  $\psi(h)\neq 0 $ for all $h\in H$
  by Corollary~\ref{c:psiformel}.
  The character correspondence of Theorem~\ref{t:corr} is determined
  by the equation $\chi_H = \psi \xi$ since $\psi$ has no zeros.
  Finally, Proposition~\ref{p:frcrat} yields that $\psi$ is
  rational.
\end{proof}
\begin{remark}
  Suppose $x\in H/L$ has order $p^r$ where $p$ is a prime.
  Let $\omega\in \compl $ be a primitive $p^r$-th root of $1$.
  Then  $\omega-1\in \mathfrak{P}$
   for any prime ideal $\mathfrak{P}$ of $\ints[\omega]$ with
  $\mathfrak{P}\cap \ints = p\ints$.
  It follows that $\psi(x)\equiv \psi(1) \mod \mathfrak{P}$.
  This holds for any character and is well known. Since here
  $\psi(x)$ is rational, we even have that $\psi(x)\equiv
  \psi(1)\mod p$.
  If $p$ is an odd prime, then $\psi(x)$ is completely determined
  by the two conditions
  \[ \psi(x)^2 = \abs{\C_{K/L}(x)}
     \quad\text{and}\quad
      \psi(x)\equiv n \mod p.\]
\end{remark}
We emphasize that we need only the character $\psi$ to compute the
correspondence: The correspondent of $\chi\in \Irr(G\mid \theta)$
is $(1/\psi)  \chi_H$ and the correspondent of
$\xi\in \Irr(H\mid \phi)$ is $(1/\overline{\psi})  \xi^G$.
If $\abs{K/L}$ is odd, even more can be said.
\begin{cor}\label{c:oddcoprim}
  In the situation of Proposition~\ref{p:coprim} assume that
  $\abs{K/L}$ is odd.
  Let $H/L$ be a complement of $K/L$ in $G/L$
  and $\psi$ the unique magic character  with
  $\det \psi = 1$.
  Then for every
  $U/L\leq H/L $ with $\abs{U/L}$ odd,
  $1_U$ is the unique constituent of $\psi_U$ with odd
  multiplicity.
\end{cor}
\begin{proof}
 By Proposition~\ref{p:frcrat} we know that the
 magic character $\psi$ with $\det \psi = 1$
 is rational.
 By Corollary~\ref{c:psiformel},
 $\abs{\psi(h)}^2= \abs{\C_{K/L}(h)}$ for all $h\in H$.
 Since $\abs{K/L}$ is odd,
 $\psi(h)\in \ints$ is odd for all $h\in H$.
 For $U\leq H$ with $\abs{U/L}$ odd,
 let $\beta = \psi_U -1_U$, a generalized character of $U$
 with $L\leq \Ker \beta$.
 For $\tau \in \Irr(U/L)$ we have
 \[ \abs{U/L}(\beta, \tau)_{U/L}
     = \sum_{u \in U/L} \beta(u) \overline{\tau(u)}
     \in 2 \ints  \]
 since $\beta(u)$ is even for all $u\in U/L$.
 As $\abs{U/L}$ is odd, we conclude that
 $(\beta, \tau)_{U/L}$ is even.
 Thus every
 $\tau \in \Irr(U/L)$
 occurs with even multiplicity in $\beta$.
 Thus $1_U$ occurs with odd multiplicity in
 $\psi=1_U+\beta$, while all other constituents occur with even
 multiplicity, as claimed.
\end{proof}
We remark that in the course of the proof we have shown
that $\beta$ can be divided by $2$.
For this we could have appealed to a more general result of
Kn\"{o}rr~\cite[Proposition~1.1(iii)]{kn84}, but for the convenience
of the reader we have repeated the simple argument here.

The following result includes two related results of
Lewis~\cite[Theorem~A in both]{lewis96a, lewis97}:
\begin{cor}
  In the situation of Proposition~\ref{p:coprim}, assume that
  $\abs{G:L}$ is odd, and let $H/L$ be a complement of $K/L$ in $G/L$.
  In the bijection of Proposition~\ref{p:coprim},
  $\chi\in \Irr(G\mid \theta)$ and $\xi\in \Irr(H\mid \phi)$
  correspond if and only if $(\chi_H,\xi)$ is odd.
\end{cor}
\begin{proof}
  It follows from the last result that
  $\psi = 1 +2\gamma$ for some character $\gamma$ of $H/L$.
  From $\chi_H=\psi\xi$ we get
  $\chi_H = \xi +2\gamma\xi$. Thus $\xi$ is the only constituent
  of $\chi_H$ occurring with odd multiplicity.
\end{proof}
In the next section we will see that we can remove the hypothesis
of coprimeness when we add the hypothesis that
$K/L$ is abelian (and odd).

\section{Odd abelian character fives}\label{sec:isaacs}
The main goal of this and the next section is to give  alternative proofs
of some results
due to Isaacs~\cite{i73}.
\begin{thm}\label{t:oddabelian}
  Let $(G,K,L,\theta,\phi)$ be an
  odd abelian character five.\footnote{This means that
  $K/L$ is abelian of odd order}
   Then there is $H\leq G$ with $G= HK$ and $L= H\cap K$,
  such that  every element of $H$ is $K$\nbd good,
   and  there is a
  magic representation $\sigma \colon H/L \to (\compl K e_{\phi})^L$.
\end{thm}
\begin{remark*}
  We will see later that there is even a magic representation
  $\sigma\colon H/L \to (\rats(\phi)Ke_{\phi})^L$.
\end{remark*}
\begin{proof}[Proof of Theorem~\ref{t:oddabelian}]
  We fix some notation needed in the proof.
  Set $S=(\compl K e_{\phi})^{L}$ and let
  $\Omega$ be the group of graded units of $S$
  (with respect to the $K/L$\nbd grading of $S$).
  Let
  \[ P = \erz{ s\in\Omega \mid \erz{s}\cap \Z(\Omega)=1
             }
  \]
  be the subgroup of $\Omega$ defined in Lemma~\ref{l:admtripclosed} and
  set $Z=\Z(P)$.
  Let $\mu\colon Z\to \compl$ be the linear character
  with $z=\mu(z)e_{\phi}$.
   Note that by Lemma~\ref{l:adm_mu}, $S\iso \compl P e_{\mu}$.
  Let $A= \C_{\Aut P}(Z)$. We split the proof into a series of
  lemmas.
  \begin{lemma}\label{l:centrinv1}
    There is $\tau\in A=\C_{\Aut P}(Z)$ such that $\tau$ inverts $P/Z$
    and $\tau^2=1$,
    and such that for $I= \Inn P$ and
    $U=\C_A(\tau)$ we have $A= IU$ and $I\cap U=1$.
  \end{lemma}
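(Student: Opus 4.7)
The group $P$ has odd order, cyclic center $Z$ (since $\mu$ embeds $Z$ faithfully into $\crp{F}^{*}$), abelian quotient $P/Z\iso K/L$ (Lemma~\ref{l:adm_mu}), and a non-degenerate commutator pairing $P/Z\times P/Z\to Z$ (because $\phi$ is fully ramified in $K$). The plan is to first construct $\tau$, then verify $I\cap U=1$ and $A=IU$ by an orbit-counting argument.

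To construct $\tau$, I will lift the inversion automorphism of $P/Z$ to $P$ via a factor set. Pick a set-theoretic section $s\colon P/Z\to P$ with $s(0)=1$, yielding a factor set $\omega(a,b):=s(a)s(b)s(a+b)^{-1}\in Z$, and try $\tau(s(a)z):=s(-a)f(a)z$ for some $f\colon P/Z\to Z$ to be determined. A short calculation shows $\tau$ is a homomorphism iff $f(a+b)=f(a)f(b)\beta(a,b)$ where $\beta(a,b):=\omega(-a,-b)\omega(a,b)^{-1}$, and $\tau^{2}=1$ iff $f(-a)=f(a)^{-1}$; one checks that $\beta$ is \emph{symmetric} using $[-\bar x,-\bar y]=[\bar x,\bar y]$ in class~$2$. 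The main technical point is that a symmetric $2$-cocycle on the odd-order abelian group $P/Z$ with values in the cyclic group $Z$ is a coboundary, which I will prove by a Lazard-style square-root argument valid since $|Z|$ is odd.

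For $I\cap U=1$, take $x\in P$ with $i_{x}\tau=\tau i_{x}$. Writing $\tau(x)=x^{-1}t(\bar x)$ with $t(\bar x)\in Z$ and using centrality of $Z$, a class-$2$ commutator calculation gives $i_{x}\tau i_{x}^{-1}(p)=x^{-2}\tau(p)x^{2}$; demanding this equal $\tau(p)$ for all $p$ forces $x^{2}\in Z$, and since squaring is bijective on the odd-order abelian group $P/Z$, we get $x\in Z$, so $i_{x}=1$. For $A=IU$, let $T:=\{\tau'\in A : \tau'\text{ inverts }P/Z\}$, a coset of $K:=\ker(A\to\Aut(P/Z))=\operatorname{Hom}(P/Z,Z)$. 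The same commutator computation yields $i_{x}\tau i_{x}^{-1}=\tilde h_{x}\tau$ where $h_{x}(\bar q)=[\bar q,\bar x]^{2}$ and $\tilde h(p)=p\cdot h(\bar p)$; the previous paragraph then shows $\bar x\mapsto h_{x}$ is injective, so the $I$-orbit of $\tau$ has exactly $|P/Z|$ elements.

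To close the argument I will show $|K|=|P/Z|$. The non-degenerate alternating pairing $P/Z\times P/Z\to Z$ induces an injection $P/Z\hookrightarrow\operatorname{Hom}(P/Z,Z)$, which combined with the cyclicity of $Z$ forces $\exp(P/Z)\mid|Z|$ and hence $|K|=|\operatorname{Hom}(P/Z,Z)|=|P/Z|$ by elementary abelian group theory. Thus the $I$-orbit of $\tau$ exhausts $T$, which in turn equals the $A$-orbit of $\tau$, so $[A:U]=|P/Z|=|I|$; combined with $I\cap U=1$ this yields $|IU|=|I||U|=|A|$, and therefore $A=IU$. The main obstacle is the symmetric-cocycle-is-a-coboundary step needed to produce $\tau$; all other steps reduce to routine class-$2$ commutator identities combined with the non-degeneracy of the pairing.
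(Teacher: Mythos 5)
The construction of $\tau$, which you correctly identify as the crux, contains a genuine gap: your ``main technical point'' --- that a symmetric $2$\nbd cocycle on an odd-order abelian group with values in a finite cyclic group is a coboundary --- is false. Symmetric cocycles modulo coboundaries compute $\operatorname{Ext}(P/Z,Z)$, which is already nonzero for two cyclic groups of odd prime order $p$, and no square-root or averaging trick can help: transfer only shows that $\abs{P/Z}$ annihilates the relevant cohomology, while $\abs{Z}$ divides $\abs{K/L}=\abs{P/Z}$, so nothing is uniquely divisible by the right numbers. Concretely, the four properties you actually use (odd order, cyclic centre, abelian quotient, non-degenerate commutator pairing) are all enjoyed by the extraspecial group of order $p^{3}$ and exponent $p^{2}$, yet that group admits \emph{no} automorphism centralizing $Z$ and inverting $P/Z$: such a map would send a generator $a$ of order $p^{2}$ to $a^{-1}z$ with $z\in Z=\erz{a^{p}}$, hence would send $a^{p}$ to $a^{-p}\neq a^{p}$. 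So $\tau$ cannot be produced from your hypotheses alone.

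The missing ingredient is exactly the one the paper's (very short) proof isolates before citing Isaacs \cite[Corollary~4.3]{i73}: by the construction of $P$ in Lemma~\ref{l:admtripclosed}, every coset of $Z$ in $P$ contains an element $p$ with $\ord(p)=\ord(Zp)$ (Isaacs's $*$\nbd group condition). This says the extension of $P/Z$ by $Z$ splits over every cyclic subgroup, which is precisely what kills the symmetric ($\operatorname{Ext}$) part of the class of your factor set $\omega$; since $[\beta]$ equals $-2$ times that symmetric part and $2$ is invertible here, $[\beta]=0$ if and only if this condition holds. Once you add it (e.g.\ choose the section $s$ so that $\ord(s(a))=\ord(a)$ on a cyclic basis of $P/Z$), your cocycle argument can be completed. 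The remaining two steps of your proposal --- the class-$2$ commutator computation giving $I\cap U=1$ and the orbit count via $\abs{\operatorname{Hom}(P/Z,Z)}=\abs{P/Z}$ giving $A=IU$ --- are correct; the paper (following Isaacs) instead gets $A=IU$ from a Frattini argument with $\erz{\tau}$ a Sylow $2$\nbd subgroup of $\erz{I,\tau}$, which avoids the counting but proves the same thing.
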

  \begin{proof}
    Note that every coset of $Z$ in $P$ contains by definition of
    $P$ an element, $p$, with
    $\ord(p)=\ord(Zp)$.
    Now a result of Isaacs \cite[Corollary~4.3]{i73}
    applies. (The proof is neither long nor difficult.)
  \end{proof}
  Note that the action of $G$ on $P$
  centralizes $\Z(S)$ and thus $Z$, and so we have a homomorphism
  $\kappa\colon G\to A$. Clearly, $L$ is in the kernel of
  $\kappa$.
  The following observation is true for admissible subgroups
  of arbitrary character fives:
  \begin{lemma}\label{l:adm_inn}
    We have $K \kappa = \Inn P$
    and $K\cap \Ker \kappa = L$.
  \end{lemma}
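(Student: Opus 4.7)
The plan is to exploit two ingredients: $P$ generates $S$ as a $\compl$-algebra (Lemma~\ref{l:adm_mu}), and for $k\in K$ the conjugation action of $k$ on $S$ coincides with conjugation by the canonical graded unit $s_{Lk}=c_k^{-1}k\in S_{Lk}$ from the discussion before Lemma~\ref{l:twisted0}, where $c_k\in\compl L e_\phi$ is chosen so that $a^k=a^{c_k}$ for all $a\in\compl L e_\phi$.

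For the containment $K\kappa\subseteq\Inn P$, I pick, for each $k\in K$, an element $p_k\in P$ with $\eps(p_k)=Lk$ using admissibility property~\ref{i:adm_sur}; then $p_k=\lambda s_{Lk}$ for some $\lambda\in\compl^*$. The key observation is that any $p\in P\subseteq S$ centralizes $L$ by definition of $S$, hence centralizes $\compl L e_\phi$, and in particular $c_k$. Using $ke_\phi=c_ks_{Lk}$ in $\compl K e_\phi$, I then compute
\[
  p^k = p^{c_ks_{Lk}} = (p^{c_k})^{s_{Lk}} = p^{s_{Lk}} = p^{p_k}.
\]
This shows that $k\kappa$ equals conjugation by $p_k\in P$, placing it in $\Inn P$. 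Reading the same calculation backwards gives the reverse inclusion $\Inn P\subseteq K\kappa$: for any $p\in P$, choose $k\in K$ with $Lk=\eps(p)$, and then $k\kappa$ is conjugation by $p$.

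For the kernel identity, the inclusion $L\subseteq K\cap\Ker\kappa$ is immediate since $S=(\compl K e_\phi)^L$, so $L$ already acts trivially on $S\supseteq P$. For the converse, suppose $k\in K$ acts trivially on $P$ by conjugation. Since $P$ generates $S$ as an algebra, $k$ acts trivially on all of $S$; by the computation above this means $s_{Lk}\in\Z(S)=\compl e_\phi=S_1$. But $s_{Lk}$ is a nonzero element of the graded component $S_{Lk}$, and distinct components of the $K/L$-grading intersect trivially, forcing $Lk=L$, that is, $k\in L$.

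I expect no genuine obstacle here: the argument is a direct bookkeeping exercise once the admissibility of $P$ and the definition of $s_{Lk}$ are in place. The only mildly subtle point to highlight is the fact that every $p\in P$ commutes with $\compl L e_\phi$, which is what makes the conjugation computation collapse to $p^k = p^{p_k}$.
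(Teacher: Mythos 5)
Your proof is correct. The first half (showing $K\kappa=\Inn P$) is essentially the paper's argument, just with the graded unit $s_{Lk}=c_k^{-1}k$ written out explicitly: the paper simply picks $p\in P\cap \compl Lk$ and notes that $s^k=s^p$ because $\compl Le_{\phi}$ centralizes $S$, which is the same collapse $p^k=p^{s_{Lk}}=p^{p_k}$ that you perform. The second half differs mildly in route. The paper deduces $K\cap\Ker\kappa=L$ by counting: from $K\kappa=\Inn P$ it gets $K/(K\cap\Ker\kappa)\iso\Inn P\iso P/Z\iso K/L$, and since $L\leq K\cap\Ker\kappa$ this forces equality. You instead argue directly that a trivially acting $k$ has its graded unit $s_{Lk}$ in $\C_S(S)=\Z(S)=S_1$, and the directness of the grading forces $Lk=L$. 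Both are valid two-line arguments; yours invokes the central simplicity of $S$ (i.e.\ full ramification of $\phi$, via Lemma~\ref{l:s-einfach}) at this point, whereas the paper's version leans on $\Z(P)=Z$ and $P/Z\iso K/L$ from Lemma~\ref{l:adm_mu}, facts it has already established. No gap in either; the choice is a matter of which previously proved facts one prefers to cite.
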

  \begin{proof}
    Let $k\in K$.
    Then there is $p\in P\cap \compl Lk$.
    As $\compl L$ centralizes $S$,
    we have $s^k = s^p$ for all $s\in S$,
    in particular for  $s\in P$.
    Thus $k\kappa\in \Inn P$.
    Conversely, every $p\in P$
    is contained in $\compl Lk$ for some $k\in K$,
    so that the inner automorphism of $P$ induced by $p$
    comes from conjugation with
    $k\in K$.
    This shows $K\kappa = \Inn P$.
    Therefore, $K/(K\cap \Ker \kappa) \iso \Inn P\iso P/Z$.
    It follows that $K\cap \ker\kappa = L$.
  \end{proof}
  We keep the notation $I=\Inn P$ and $U=\C_A(\tau)$, with $\tau$
  as in Lemma~\ref{l:centrinv1}.
  As before, $\kappa\colon G\to A$ is the homomorphism
  induced by the action of $G$ on $P$.
  \begin{lemma}
   Set
   $H= \kappa^{-1}(U)$.
   Then $G=HK$ and $H\cap K=L$.
  \end{lemma}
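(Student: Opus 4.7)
The plan is to extract the conclusion quickly by exploiting the decomposition $A = IU$ with $I \cap U = 1$ together with the previous lemma identifying $K\kappa = I = \Inn P$ and $K \cap \Ker\kappa = L$. The homomorphism $\kappa\colon G \to A$ is the main tool, and both claims reduce to elementary pullback arguments.

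First I would verify $H \cap K = L$. Take $k \in H \cap K$. Since $k \in H = \kappa^{-1}(U)$, we have $k\kappa \in U$; since $k \in K$, we have $k\kappa \in K\kappa = I$ by Lemma~\ref{l:adm_inn}. Using $I \cap U = 1$ from Lemma~\ref{l:centrinv1}, we conclude $k\kappa = 1$, so $k \in K \cap \Ker\kappa = L$. The reverse inclusion is immediate since $L \subseteq K$ and $L \subseteq \Ker\kappa \subseteq H$ (because $1 \in U$).

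Next I would verify $G = HK$. Given $g \in G$, the image $g\kappa$ lies in $A = IU$, so we can write $g\kappa = ij$ with $i \in I$ and $j \in U$. Applying Lemma~\ref{l:adm_inn} again, choose $k \in K$ with $k\kappa = i$. Then
\[ (k^{-1}g)\kappa = i^{-1}(ij) = j \in U,\]
so $k^{-1}g \in \kappa^{-1}(U) = H$. Thus $g = k(k^{-1}g) \in KH$, and since $K \nteq G$ we have $KH = HK$, yielding $g \in HK$ as required.

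There is no genuine obstacle here: once Lemma~\ref{l:centrinv1} supplies the internal decomposition $A = IU$ with $I \cap U = 1$, and Lemma~\ref{l:adm_inn} identifies $I$ as the image of $K$ under $\kappa$ with kernel $L$ on $K$, the complement $H = \kappa^{-1}(U)$ is forced to be the desired complement of $K$ over $L$. The real work was done in establishing those two lemmas; this step is just a pullback of a group-theoretic factorization through $\kappa$.
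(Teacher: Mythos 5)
Your proof is correct and follows essentially the same route as the paper: both arguments pull back the internal factorization $A=IU$, $I\cap U=1$ of Lemma~\ref{l:centrinv1} through $\kappa$, using $K\kappa=I$ and $K\cap\Ker\kappa=L$ from Lemma~\ref{l:adm_inn}. The only cosmetic difference is that you spell out $G=HK$ element-by-element where the paper phrases it as $G\kappa=(U\cap G\kappa)I$.
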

  \begin{proof}
       Since $( H\cap K ) \kappa \subseteq U\cap I = 1$,
        it follows that $H \cap K \subseteq \Ker \kappa\cap K = L$.
        Thus $H\cap K =L$.
        Since $I=K\kappa\leq G\kappa \leq A= UI$, it follows that
        $G\kappa = (U\cap G\kappa)I$ and thus $G=HK$.
  \end{proof}
  \begin{lemma}\label{l:hallgood}
    All elements of $H$ are $K$\nbd $\phi$\nbd good.
  \end{lemma}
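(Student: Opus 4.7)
The plan is to translate the pairing $\skp{c,h}_{\phi}$, for $c\in\C_{K/L}(h)$ and $h\in H$, into a statement inside the finite group $P$, and then to exploit the commutation $\kappa(h)\tau=\tau\kappa(h)$ together with the fact that $\tau$ inverts $P/Z$.

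First I would fix $h\in H$ and $c\in\C_{K/L}(h)$, and choose an element $p\in P$ mapping to $c$ under $\eps\colon P\to K/L$, which is possible by Lemma~\ref{l:admtripclosed}\ref{i:adm_sur}. Viewed in $S$, $p$ is a graded unit in the component indexed by $c$, so the remark following Lemma~\ref{l:twisted0} gives
\[ \skp{c,h}_{\phi}e_{\phi}=p^{-1}p^{h}, \]
where $p^{h}=p^{\kappa(h)}$ since $h$ acts on $P$ via $\kappa(h)\in A$. Because $c$ is centralized by $h$ in $K/L\iso P/Z$, the element $p^{\kappa(h)}$ lies in the coset $Zp$, so we may write $p^{\kappa(h)}=zp$ with $z\in Z$. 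Combining this with $z=\mu(z)e_{\phi}$ from Lemma~\ref{l:adm_mu} yields $\skp{c,h}_{\phi}=\mu(z)$, and the problem reduces to proving $z=1$.

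The decisive step is to use $\kappa(h)\in U=\C_{A}(\tau)$. Since $\tau$ inverts $P/Z$, there is $z'\in Z$ with $p^{\tau}=z'p^{-1}$. I would then compute $(p^{\tau})^{\kappa(h)}$ in two ways, using that $\kappa(h)$ and $\tau$ commute and that both fix $Z$ pointwise: applying $\kappa(h)$ first gives
\[ (p^{\kappa(h)})^{\tau}=(zp)^{\tau}=z\,p^{\tau}=zz'p^{-1}, \]
while applying $\tau$ first gives
\[ (z'p^{-1})^{\kappa(h)}=z'(p^{\kappa(h)})^{-1}=z'(zp)^{-1}=z'z^{-1}p^{-1}. \]
Equating these forces $z^{2}=1$. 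Finally, by Lemma~\ref{l:admtripclosed}\ref{i:adm_fin}, $\abs{P}$ divides $\abs{K/L}^{2}$, which is odd by hypothesis, so $\abs{Z}$ is odd and $z^{2}=1$ implies $z=1$. Hence $\skp{c,h}_{\phi}=\mu(1)=1$, as required.

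The main obstacle is the bookkeeping in the first paragraph: identifying that $p\in P$ can be taken as the graded unit $s_{c}$ in the formula $\skp{c,h}_{\phi}e_{\phi}=s_{c}^{-1}s_{c}^{h}$, and recognizing that the action of $h$ on $p$ is literally the action of the automorphism $\kappa(h)$ of $P$ (not merely up to a scalar in $S^{*}$). Once the pairing is expressed as an element of $Z$, the involutory inverter $\tau$ does the conceptual work in a couple of lines, and the odd-order hypothesis on $K/L$ supplies exactly the cancellation needed to pass from $z^{2}=1$ to $z=1$.
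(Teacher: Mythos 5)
Your proof is correct and follows essentially the same route as the paper's: translate $\skp{c,h}_{\phi}$ into the commutator $p^{-1}p^{\kappa(h)}\in Z$ inside the finite group $P$, and exploit that $\kappa(h)$ lies in $U=\C_A(\tau)$ where $\tau$ inverts $P/Z$ and centralizes $Z$. The only difference is the endgame: the paper writes each relevant element of $\C_{P/Z}(h)$ (modulo $Z$) as $[d,\tau]$ and shows its commutator with $\kappa(h)$ vanishes, whereas you apply $\tau$ to the relation $p^{\kappa(h)}=zp$ to force $z^{2}=1$ and then invoke the oddness of $\abs{Z}$ --- an equally valid and arguably cleaner conclusion.
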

  \begin{proof}
    Let $h\in H$. We have to show
    that $\skp{x,h}_{\phi}=1$ for all
    $x\in \C_{K/L}(h)$.
    First we translate this to a statement about the action of $h$ on
    $P$.
    Let $s_x\in S_x$. Then by Lemma~\ref{l:twisted0},
    $\skp{x,h}_{\phi}= s_x^{-1}s_x^h$.
    We may choose $s_x \in P$, since $P\to K/L$ is surjective.
    Also observe that the isomorphism $P/Z\iso K/L$ sends
    $\C_{P/Z}(h)$ onto $\C_{K/L}(h)$.
    Thus we need to show that $[c,h]=1$ for all $c\in C$,
    where $C\leq P$ is defined by $C/Z=\C_{P/Z}(h)$.
    We may replace $h$ by $u=h\kappa\in U$.
    Since $u^{\tau}=u$, it follows
    $C^{\tau}=C$. Since $\C_{P/Z}(\tau)=1$, we have
    $C=[C,\tau]$ as sets.
    Thus every element in $C$ has the form
    $[d,\tau]=d^{-1}d^{\tau}$ for some $d\in C$.
    For these,
    \begin{align*}
        [d^{-1}d^{\tau},u]
        = [d^{-1},u][d^{\tau},u]
         &= [d,u]^{-1}[d^{\tau},u^{\tau}]
         \\
        &= [d,u]^{-1}[d,u]
         =1,
    \end{align*}
    as was to be shown.
  \end{proof}
  \begin{lemma}\label{l:oddweil}
    There is a representation
    $W\colon U\to \compl Pe_{\mu}$ such that
    $s^{u}=s^{W(u)}$ for all $s\in \compl Pe_{\mu}$.
  \end{lemma}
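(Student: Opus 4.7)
The goal is to lift the inner-automorphism action of $U$ on the central simple algebra $S=\compl Pe_{\mu}$ to a genuine homomorphism $W\colon U\to S^{*}$. By Skolem--Noether, there is at least a projective representation $V\colon U\to S^{*}$ with $s^{u}=s^{V(u)}$ for all $s\in S$, unique up to a map $U\to\compl^{*}$, and the lemma is equivalent to showing that the associated class in $H^{2}(U,\compl^{*})$ is trivial. The available structural input is the central involution $\tau\in U=\C_{A}(\tau)$ produced by Lemma~\ref{l:centrinv1} (note $\tau\in U$ by reflexivity of centralizing, hence $\tau$ is central in $U$), together with the hypothesis that $\abs{P/Z}=\abs{K/L}$ is odd.

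The plan is to construct $W$ via an explicit averaging formula tailored to $\tau$. First, fix $T:=V(\tau)\in S^{*}$, scaled so that $T^{2}=1_{S}$; this is possible because $\tau^{2}=1$ forces $T^{2}\in\compl^{*}$. Since $\tau$ inverts $P/Z$ and centralizes $Z$, one checks that $TpT^{-1}\in\compl^{*}p^{-1}$ for every $p\in P\subseteq S$. For $u\in U$, the relation $[u,\tau]=1$ in $A$ combined with $T^{2}=1$ forces $V(u)TV(u)^{-1}=\pm T$, and with a correct normalization the sign is always $+$; this uses that $\abs{P/Z}$ is odd, since the map $p\mapsto p^{-2}$ on $P/Z$ (which arises when expanding $p^{-1}p^{\tau}$) is then a bijection. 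With $V(u)$ chosen to commute with $T$, define
\[
  W(u)\;=\;c_{u}\sum_{p\in\mathcal{T}}p\cdot V(u)\cdot(p^{u})^{-1},
\]
where $\mathcal{T}\subseteq P$ is a transversal of $Z$ consisting of elements $p$ with $\ord(p)=\ord(Zp)$ (such exist by the definition of $P$), and $c_{u}\in\compl^{*}$ is a scalar chosen so that the right side is independent of the choice of $V(u)$ and of $\mathcal{T}$.

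What remains, and is the main obstacle, is to verify that (i) the sum $W(u)$ is actually invertible in $S$, and (ii) $W(uv)=W(u)W(v)$ for all $u,v\in U$. Both properties rest essentially on $\abs{P/Z}$ being odd: for (i), a trace computation, in the style of Proposition~\ref{p:sigmaconcret} from the commented-out section but with $\tau$-twists built in, exploits that the map $p\mapsto p^{-1}p^{\tau}$ is a bijection on $P/Z$ when $\abs{P/Z}$ is odd; for (ii), a change-of-variable argument in the double sum arising from $W(u)W(v)$ matches the sum defining $W(uv)$ precisely because the $u$- and $\tau$-twists interact cleanly over $P/Z$ of odd order. The oddness of $\abs{K/L}$ is thus used twice — once to produce $\tau$ via Lemma~\ref{l:centrinv1}, and once to carry out the averaging — which is why the hypothesis cannot be dropped at this step.
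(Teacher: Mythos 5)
Your proposal has a genuine gap exactly where the content of the lemma lies. The explicit formula $W(u)=c_{u}\sum_{p\in\mathcal{T}}p\cdot V(u)\cdot(p^{u})^{-1}$ collapses: since the action of $u$ on $S$ is implemented by $V(u)$, we have $p^{u}=V(u)^{-1}pV(u)$, so each summand equals $pV(u)\,V(u)^{-1}p^{-1}V(u)=V(u)$ and hence $W(u)=c_{u}\abs{\mathcal{T}}V(u)$ is merely a scalar multiple of the projective representation you started from. The averaging therefore contributes nothing toward trivializing the cocycle, and choosing the scalars $c_{u}$ so that $W$ becomes multiplicative is the original problem restated. (Contrast this with the Ward-style sum $\sigma(g)=\sum_{x}s_{x}^{-1}s_{x}^{g}$ from the commented-out Proposition, where $g$ acts externally and no $V(g)$ sits inside the sum.) Beyond this, you explicitly defer the two verifications --- invertibility and multiplicativity --- which you yourself call ``the main obstacle''; as long as these are only asserted to ``rest essentially on $\abs{P/Z}$ being odd,'' nothing has been proved. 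Also, your justification that the sign in $V(u)TV(u)^{-1}=\pm T$ is always $+$ (via $p\mapsto p^{-2}$ being a bijection) is not an argument; the correct mechanism is that $(\tr t)^{2}=\abs{\C_{P/Z}(\tau)}=1$ by Proposition~\ref{p:psiformel}, so $\tr t\neq 0$, and conjugation preserves the trace.

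The device that actually kills the cocycle in the paper is worth internalizing, since some version of it is what your argument is missing. Having normalized $t=\widetilde{W}(\tau)$ with $t^{2}=1$ and shown $t^{\widetilde{W}(u)}=t$ as above, one decomposes a simple module as $V_{+}\oplus V_{-}$ under $t$; both summands are $\widetilde{W}(u)$-invariant, and $d_{+}-d_{-}=\tr t=\pm 1$. The determinants $\delta_{\pm}(u)$ of $\widetilde{W}(u)$ on $V_{\pm}$ satisfy $\delta_{\pm}(u)\delta_{\pm}(v)=\alpha(u,v)^{d_{\pm}}\delta_{\pm}(uv)$, so the ratio $\delta_{+}(u)/\delta_{-}(u)$ transforms by $\alpha(u,v)^{d_{+}-d_{-}}=\alpha(u,v)^{\mp 1}$; rescaling $\widetilde{W}(u)$ by this ratio (with the appropriate sign of the exponent) produces an explicit coboundary cancelling $\alpha$ and yields a multiplicative $W$. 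The oddness of $\abs{P/Z}$ enters only through the existence of $\tau$ and through $\C_{P/Z}(\tau)=1$, which forces $\abs{d_{+}-d_{-}}=1$; it is this numerical coincidence, not an averaging over $P$, that makes the lift possible.
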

    This is, in principle, well known. Namely, the group $P$ can be interpreted
    as a Heisenberg group, and $W$ is the corresponding
    Weil representation~\citep[cf.][]{climcnsze00, prasad09p}.
    We give a proof for the sake of completeness and to show that
    the result is neither very deep nor difficult.
  \begin{proof}[Proof of Lemma~\ref{l:oddweil}]
    There is a projective representation
    $\widetilde{W}\colon U \to T=\compl Pe_{\mu}$
    such that $s^u= s^{\widetilde{W}(u)}$ for all $s\in T$.
    Set $t=\widetilde{W}(\tau)$,
    where $\tau$ is as in Lemma~\ref{l:centrinv1}.
    As $\ord(\tau)=2$, we may assume,
    after replacing $t$ by a suitable scalar multiple, that
    $t^2= 1_T$.
    Let $u\in U$.
    Since $\tau^u =\tau$, it follows that
    $t^{\widetilde{W}(u)}$ is a scalar multiple
    of $t$.
    On the other hand, we have
    $(\tr t)^2= \abs{\C_{P/Z}(\tau)}=1$
    by Proposition~\ref{p:psiformel}.
    In particular,
    $\tr(t)\neq 0$.
    It follows $t^{\widetilde{W}(u)}=t$.

    Now let $V$ be a simple $T$\nbd module and set
    \[ V_{+}= \{ v\in V \mid vt=v \}
       \quad \text{and}\quad
       V_{-}= \{v\in V \mid vt=-v\}.\]
    Then, since $t^2=1$, we have
    $V=V_{+}\oplus V_{-}$.
    From the previous paragraph it follows that
    $V_{+}$ and $V_{-}$ are invariant under $\widetilde{W}(u)$.
    Set $d_{+}=\dim V_{+}$
    and $\delta_{+}(u) = \det( \widetilde{W}(u)\text{ on }
    V_{+})$, and define $d_{-}$ and $\delta_{-}(u)$ similarly.
    From
    \[
       \widetilde{W}(u)\widetilde{W}(v)
        = \alpha(u,v)\widetilde{W}(uv)
   \]
    we get
   \[ \delta_{\pm}(u) \delta_{\pm}(v)
       = \alpha(u,v)^{d_{\pm}}\delta_{\pm}(uv).
   \]
    Since $(\tr t)^2 =1$, it follows
    $\tr t = \pm 1 = d_{+}-d_{-}$.
    We may assume $d_{+}-d_{-} =-1$.
    Set
    \[ W(u) =\frac{\delta_{+}(u)}{\delta_{-}(u)}\widetilde{W}(u).\]
    It is now easy to verify that $W$ is multiplicative.
    The lemma follows.
  \end{proof}
    The proof of
   Theorem~\ref{t:oddabelian} is finished by noting
   that
   \[ \begin{CD}
       H/L @>\kappa >> U @>W>> \compl P e_{\mu} @>>> S
   \end{CD}\]
   is the desired magic representation.
\end{proof}

\section{The canonical magic character}
\label{sec:canonical}
In the odd abelian case, it is possible to choose a canonical
$\psi$, as Isaacs has shown.
The existence and the most important properties
of this canonical magic character  can be derived
 from what we have done so far, with (I hope) simpler proofs.
 Some of the arguments we need are taken from the original proof,
 but for the convenience of the
reader and the sake of completeness we repeat them here.
The following is an adaption of Isaacs' definition
of ``canonical''~\cite[Definition~5.2]{i73} to our
purposes.
\begin{defi}\label{d:canonical}
  Let $(G,K,L,\theta,\phi)$ be a character five with $\abs{K/L}$ odd.
  Let $\pi$ be the set of prime divisors of $\abs{K/L}$.
  A
  magic character $\psi\in \ints[\Irr(H/L)]$ is called \emph{canonical} if
  \begin{enums}
    \item $\ord(\psi)$ is a $\pi$\nbd number and
    \item If $p \in \pi$ and $Q\in \Syl_p(H)$, then $1_Q$ is the
    unique irreducible constituent of $\psi_Q$ which appears with
    odd multiplicity.
  \end{enums}
\end{defi}
\begin{remark}
  Let $(G, K, L, \theta, \phi)$ be a
  character five with $\abs{K/L}$ odd.
  If a canonical magic character $\psi\colon H\to \compl$ exists,
  then all $h\in H$ are good.
\end{remark}
\begin{proof}
  Let $h\in H$.
  We have to show that
  $\skp{h,c}_{\phi}=1$ for all $c\in \C_{K/L}(h)$.
  Write $h=\prod_{p} h_p$ as product of its
  $p$-parts.
  Since $\C_{K/L}(h) = \bigcap_{p} \C_{K/L}(h_p)$, we may assume
  that $h$ itself has  prime power order, $p^t$, say.
  If $p$ does not divide $\abs{K/L}$, then $h$ is good by
  Lemma~\ref{l:cpgood}.
  If $p$ divides $\abs{K/L}$, then let $Q\in \Syl_p (H)$ be a
  Sylow $p$-subgroup containing $h$.
  Then, by canonicalness, $\psi_Q = 1_Q + 2\beta$ for some
  character $\beta$.
  It follows that $\psi(h)\neq 0$ and thus $h$ is good by
  Corollary~\ref{c:psiformel}.
\end{proof}
If $K/L$ is not abelian, it may happen that there is no canonical
$\psi$ even if there is a magic character.
For example it may be that there
are $p$\nbd elements in a complement that are not good. An example
where this occurs has been given by Lewis~\cite{lewis96b}. In his
example, $K/L$ is a $p$\nbd group,
 and the complement $H$ is unique up to conjugacy.
\begin{lemma}\label{l:uniquecan}
  The complement $H$ given,
  there is at most one canonical magic character $\psi$.
\end{lemma}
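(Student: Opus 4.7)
The plan is to exploit the fact, noted after Definition~\ref{d:magic}, that any two magic representations on the same complement $H/L$ differ by multiplication by a linear character of $H/L$. Thus if $\psi_1,\psi_2$ are two canonical magic characters, then $\psi_2 = \lambda\psi_1$ for some $\lambda\in\Lin(H/L)$, and the task reduces to showing $\lambda = 1_H$. The strategy is to feed each of the two canonical conditions (determinant and Sylow restriction) into constraints on $\lambda$ and then to combine them.

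First I would use Condition~(ii). Fix $p\in\pi$ and $Q\in\Syl_p(H)$. Since both $\psi_i$ are canonical we may write $\psi_i\big|_Q = 1_Q + 2\beta_i$ with $\beta_i$ a character of $Q$. Substituting $\psi_2 = \lambda\psi_1$ gives
\[
  \lambda\big|_Q - 1_Q \;=\; 2\bigl(\beta_2 - (\lambda\big|_Q)\beta_1\bigr),
\]
which is a generalized character divisible by $2$. Taking the inner product with $1_Q$ yields a value in $2\ints$; but $(\lambda|_Q - 1_Q, 1_Q)_Q$ equals $-1$ whenever $\lambda|_Q\neq 1_Q$, an odd integer. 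Hence $\lambda|_Q = 1_Q$ for every Sylow $p$-subgroup $Q$ with $p\in\pi$; equivalently, $\ker\lambda$ contains a full Sylow $p$-subgroup of $H$ for each $p\in\pi$.

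Second I would use Condition~(i). Writing $\sigma_2 = \lambda\sigma_1$ and taking determinants gives $\det\sigma_2 = \lambda^n\det\sigma_1$, where $n=\psi_i(1)=\sqrt{\abs{K/L}}$ is a $\pi$-number. By canonicalness both $\det\sigma_i$ have $\pi$-order, so $\lambda^n$ has $\pi$-order. Writing $\lambda = \lambda_\pi\lambda_{\pi'}$ as a product of its $\pi$- and $\pi'$-parts, the equation $\lambda^n = \lambda_\pi^n\lambda_{\pi'}^n$ and the coprimality of $n$ to $\ord(\lambda_{\pi'})$ force $\lambda_{\pi'}=1$, so $\lambda$ has $\pi$-order.

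Combining the two conclusions finishes the argument: $\lambda$ factors through $H/\ker\lambda$, whose order equals $\ord(\lambda)$, a $\pi$-number. On the other hand $\ker\lambda$ contains a Sylow $p$-subgroup for every $p\in\pi$, so $\abs{H/\ker\lambda}$ is a $\pi'$-number. Hence $\abs{H/\ker\lambda}=1$, so $\lambda = 1_H$ and $\psi_1 = \psi_2$. There is no real obstacle here beyond checking that $\ord(\psi)$ in Definition~\ref{d:canonical} refers to the order of the determinant of a representation affording $\psi$, which is consistent with the usage in Remark~\ref{r:corrdet}.
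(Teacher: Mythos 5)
Your proof is correct and takes essentially the same route as the paper's: the determinant condition forces $\ord(\lambda)$ to be a $\pi$-number, and the Sylow condition forces $\lambda$ to restrict trivially to each $Q\in\Syl_p(H)$ for $p\in\pi$, whence $\lambda=1_H$. The only cosmetic difference is in the Sylow step, where the paper reads off $\lambda_Q=1_Q$ from the multiplicity identity $[\lambda_Q,(\psi_1)_Q]=[1_Q,\psi_Q]$ (odd) instead of your divisibility-by-$2$ argument.
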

\begin{proof}\cite[p.~610]{i73}
  Suppose $\psi$ and $\psi_1$ are canonical.
  Then $\psi_1 = \lambda\psi$ for some $\lambda\in \Lin (H/L)$.
  If some prime $q\notin \pi$ divides $\ord(\lambda)$,
  then from $\det \psi_1 = \lambda^n \det \psi$ and
  $(q,n)=1$ we conclude that $q $ divides $\ord (\psi_1)$,
  but this contradicts $\psi_1$ being canonical.
  Therefore $\ord(\lambda)$ is a $\pi$\nbd number.
  Let $p\in \pi$ and $Q\in \Syl_p H$.
  Then $[\lambda_Q, (\psi_1)_Q]= [1_Q, \psi_Q]$ and the last is
  odd by the definition of canonical.
  From the assumption that $\psi_1$ is  canonical too we conclude that
  $\lambda_Q=1_Q$. This holds in fact for all $\pi$\nbd subgroups of
  $H$. As $\ord(\lambda)$ is a $\pi$\nbd number, we have
  $\lambda= 1_H$ and $\psi_1=\psi$.
\end{proof}
\begin{thm}\label{t:oddabcan}
  If $(G,K,L,\theta,\phi)$ is an odd abelian character five, then
  there is a  canonical magic character $\psi$.
  Let $H/L$ be the complement of $K/L$ in $G/L$ on which $\psi$ is
  defined.
  For every subgroup $V/L \leq H/L$ with $\abs{V/L}$ odd,
  $1_V$ is the unique irreducible constituent of $\psi$
  which appears with  odd multiplicity.
\end{thm}
\begin{proof}(cf.\ Isaacs~\cite[Theorem~5.3]{i73}.)
  Let $P$, $A$, $\tau \in A$ and $U=\C_A(\tau)$ be as in the proof of
  Theorem~\ref{t:oddabelian}.
  It suffices to show that we may choose
  the
  $W\colon U\to \compl P e_{\mu}$ in Lemma~\ref{l:oddweil}
  such that its character, which we still call $\psi$, is
  canonical.

  First, let $W$ be any representation
  as in Lemma~\ref{l:oddweil}, and let
  $\psi$ be its character.
  We can assume that
  $\psi$ has $\pi$\nbd order.
  As $\tau \in \Z(U)$, we can write
  $\psi = \psi_{+} + \psi_{-}$ where
  $\psi_{+}(\tau)= \psi_{+}(1)$ and
  $\psi_{-}(\tau)= -\psi_{-}(1)$.

  Let $V  \leq U$ with $\abs{V}$ odd and take $v\in V$.
  As $\tau$ centralizes $v$ and $v$ has odd order,
  we have $\tau = (\tau v )^{\ord(v)}$.
  Thus $\C_{P/Z}(\tau v)\subseteq \C_{P/Z}(\tau)=1$.
  Therefore
  \[ 1 = \abs{\psi(\tau v)} = \abs{\psi_{+}(v)- \psi_{-}(v)}\]
  for every $v\in V$.
  This yields
  $(\psi_{+}-\psi_{-}, \psi_{+}-\psi_{-})_V = 1$ and hence
  $(\psi_{+}- \psi_{-})_V = \pm \lambda$
  where $\lambda \in \Lin V$.
  The sign depends  not on $V$,
  but only on wether $\psi_{+}(1)> \psi_{-}(1)$ or
  $\psi_{+}(1)< \psi_{-}(1)$. We conclude
  \[ \psi_V = 2 \gamma_V + \lambda,
     \text{ where }
     \gamma =
     \begin{cases}
       \psi_{-} & \text{if } \psi_{+}(1)> \psi_{-}(1)\\
       \psi_{+} & \text{if } \psi_{+}(1)< \psi_{-}(1).
     \end{cases}\]
  This equation shows that $\lambda$ is the only constituent of
  $\psi_V$ occuring with odd multiplicity.
  Taking determinants in the equation yields
  $\det \psi_V = (\det \gamma_V)^2 \lambda$.
  Thus $\lambda$ can be extended to a linear character of $U$,
  namely to $\mu = \det \psi (\det \gamma)^{-2}$.
  Write $\mu = \mu_{\pi}\mu_{\pi'}$ where $\mu_{\pi}$ is
  the $\pi$\nbd part of $\mu$.
  Then
  $\overline{\mu_{\pi}}\psi$ still has determinantal order a
  $\pi$\nbd number.
  For $Q\in \Syl_p(U)$ where $p\in \pi$ we have
  $(\mu_{\pi})_Q =\mu_Q$ and thus the unique
  irreducible constituent of
  $\overline{\mu_{\pi}}\psi$
  with odd multiplicity is
  $1_Q$. This shows that $\overline{\mu_{\pi}}\psi$ is canonical
  and completes the proof of the existence of a canonical
  magic character.

  Now assume that $\psi$ is canonical.
  We have already seen that for
  $\abs{V}$ odd, $\psi$ has a unique constituent $\lambda$ of odd
  multiplicity and that this constituent is linear.
  To show that $\lambda= 1_V$ it suffices to show
  that $\lambda_Q = 1_Q$
  if $Q$ is a  $p$\nbd subgroup of $V$.
  If $p\in \pi$ this is clear from the definition of canonical.
  If $p \notin \pi$, then $(\abs{Q}, \abs{P/Z})=1$
  and the result follows from
  Corollary~\ref{c:oddcoprim}, applied to $Q$.
\end{proof}
As in the coprime case, we get as a corollary:
\begin{cor}
  Let $(G,K,L,\theta,\phi)$ be an abelian character five with
  $\abs{G:L}$ odd.
  Then there is a complement $H/L$ of $K/L$ in $G/L$
  and a bijection between $\Irr(G\mid \theta)$ and
  $\Irr(H\mid \phi)$ where
  $\chi$ and $\xi$ correspond if and only if
  $(\chi_H, \xi)$ is odd.
\end{cor}
We conclude this section with some results showing that the
canonical magic representation has values in
$(\rats(\phi)Ke_{\phi})^L$.

If $\alpha$ is an automorphism
of the field $\crp{E}$,
then $\alpha$ acts on  the group algebra $\crp{E} G$
by acting on coefficients.
If $\sigma\colon H/L\to (\crp{E}K e_{\phi})^L$ is a magic
representation, then we write $\sigma^{\alpha}$ for the
map
$H/L \ni x \mapsto \sigma(x)^{\alpha} \in
(\crp{E}Ke_{\phi^{\alpha}})^L$.
Now the following is easy to
verify~\cite[Proposition~4.5]{ladisch10pre}:
\begin{prop}\label{p:fieldsmag}
  Let $(G,K,L,\theta,\phi)$ be a character five
  with magic representation $\sigma$ and magic character
  $\psi$, and let $\alpha$ be a field
  automorphism.
  Then $\sigma^{\alpha}$ is a magic representation with magic
  character $\psi^{\alpha}$
  for the character five
  $(G,K,L,\theta^{\alpha}, \phi^{\alpha})$.
\end{prop}
The definition of a canonical character is invariant under field
automorphisms. Thus:
\begin{cor}
  Let $(G,K,L,\theta,\phi)$ be an odd abelian character five with
  canonical magic character $\psi$, and let
  $\alpha$ be a field automorphism.
  Then $\psi^{\alpha}$ is the
  canonical magic character associated with the character five
  $(G,K,L,\theta^{\alpha},\phi^{\alpha})$.
\end{cor}
\begin{cor}\label{c:magicrepvals}
 The image of the canonical magic representation
 is contained in $(\rats(\phi)Ke_{\phi})^L$,
and the values of the canonical character are in
  $\rats(\phi)$.
\end{cor}
\begin{proof}
  Let $\sigma\colon H/L\to (\compl K e_{\phi})^L$ be the canonical
  magic representation.
  Let $\alpha$ be a field automorphism fixing $\rats(\phi)$.
  Thus $\phi^{\alpha}=\phi$.
  By Proposition~\ref{p:fieldsmag}, $\sigma^{\alpha}$ is a magic
  representation for the character five $(G,K,L,\theta,\phi)$.
  Since $\sigma^{\alpha}$ is canonical too, we have
  $\sigma^{\alpha}=\sigma$.
  Since this holds for all $\alpha$ centralizing $\rats(\phi)$,
  it follows that
  $\sigma(Lh)\in \rats(\phi) K$, as was to be shown.
  The second assertion follows from the first.
\end{proof}
\begin{remark*}
  In fact, one can show that the values of the canonical character
  are contained in a much smaller
  field~\cites[Cor.~6.4]{i73}[or][Cor.~4.35]{ladisch09diss}.
\end{remark*}
It follows that Theorem~\ref{t:corr} applies with
$\crp{F}=\rats(\phi)$, if we know that
$(\rats(\phi)Ke_{\phi})^L \iso
\mat_n(\rats(\phi))$.

\section{Semi-invariant characters}\label{sec:semi-inv}
We review the notion of semi-invariant characters and recall some
results that we need.
Throughout this section, let $L\nteq G$ and $\phi\in \Irr L$.
Let $\crp{F}$ be a field of
characteristic zero. All characters are assumed to take values in
some field containing $\crp{F}$, so that expressions like
$\crp{F}(\phi)$ are defined.
We need the following well known fact.
\begin{lemma}\label{l:Traceidempotent}
  \[ e = \T_{\crp{F}}^{\crp{F}(\phi)}(e_{\phi})
    := \sum_{\alpha\in \Gal(\crp{F}(\phi)/\crp{F}) }
        (e_{\phi})^{\alpha}\]
  is
  the unique central primitive idempotent of\/ $\crp{F}L$
  for which $\phi(\crp{F}L e)\neq 0$.
\end{lemma}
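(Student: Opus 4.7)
The plan is to deduce the lemma from the standard description of the central primitive idempotents of $\crp{F}L$ as Galois sums of those of a splitting field's group algebra. Fix a splitting field $\crp{E}$ for $L$ with $\crp{F}(\phi) \leq \crp{E}$; one may take $\crp{E} = \crp{F}(\zeta_{m})$ with $m$ the exponent of $L$, so that $\crp{E}/\crp{F}$ is abelian and every intermediate extension, in particular $\crp{F}(\phi)/\crp{F}$, is Galois. The central primitive idempotents of $\crp{E}L$ are the $e_\psi$ for $\psi \in \Irr L$, and standard Galois descent identifies the central primitive idempotents of $\crp{F}L$ with the sums $\sum_{\psi \in O}e_\psi$ over the $\Gal(\crp{E}/\crp{F})$-orbits $O$ on $\Irr L$.

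Next I would identify the orbit of $\phi$. Its stabilizer in $\Gal(\crp{E}/\crp{F})$ equals $\Gal(\crp{E}/\crp{F}(\phi))$, since an automorphism of $\crp{E}$ over $\crp{F}$ fixes every value of $\phi$ if and only if it fixes $\crp{F}(\phi)$ pointwise. Because restriction $\Gal(\crp{E}/\crp{F}) \to \Gal(\crp{F}(\phi)/\crp{F})$ is surjective, the orbit of $\phi$ is parameterized bijectively by $\Gal(\crp{F}(\phi)/\crp{F})$ via $\alpha \mapsto \phi^\alpha$. Summing, and using $(e_\phi)^\alpha = e_{\phi^\alpha}$, the associated central primitive idempotent of $\crp{F}L$ is precisely
\[
e = \sum_{\alpha \in \Gal(\crp{F}(\phi)/\crp{F})} e_{\phi^\alpha} = \T_{\crp{F}}^{\crp{F}(\phi)}(e_{\phi}).
\]

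To finish I would verify the two required properties. Since the $e_{\phi^\alpha}$ are pairwise orthogonal central idempotents, $e_\phi\, e = e_\phi$, so $\phi(e) = \phi(e_\phi) = \phi(1) \neq 0$ and hence $\phi(\crp{F}Le) \neq 0$. Conversely, let $e'$ be any central primitive idempotent of $\crp{F}L$ and write $e' = \sum_{\psi \in O'}e_\psi$ for a $\Gal(\crp{E}/\crp{F})$-orbit $O'$. A representation affording $\phi$, extended to $\crp{E}L$, annihilates $e_\psi$ for every $\psi \neq \phi$, so $\phi(\crp{F}L e') \neq 0$ forces $\phi \in O'$, whence $O' = O$ and $e' = e$. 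The whole argument is bookkeeping around orbits and the Galois action, and I do not expect any serious obstacle; the one point requiring a moment of care is the identification of the stabilizer of $\phi$ with $\Gal(\crp{E}/\crp{F}(\phi))$, which is what makes the trace formula come out cleanly.
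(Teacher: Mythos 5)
Your argument is correct and complete; the Galois-descent description of the central primitive idempotents of $\crp{F}L$ as orbit sums, together with the identification of the stabilizer of $\phi$ with $\Gal(\crp{E}/\crp{F}(\phi))$, is exactly the standard route. The paper itself states this lemma as a well-known fact and gives no proof, so your write-up simply supplies the expected argument.
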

\begin{notat}
 We write $e_{ (\phi, \crp{F}) }$ for the idempotent of
 Lemma~\ref{l:Traceidempotent}. In particular, if
 $\crp{F}=\crp{F}(\phi)$, then
 $e_{ (\phi, \crp{F}) }= e_{\phi}$.
\end{notat}
\begin{lemma}\label{l:fe_iso}
  \[  \crp{F}G_{\phi}e_{ (\phi, \crp{F}) } \ni a \mapsto
      ae_{\phi} \in \crp{F}(\phi)G_{\phi}e_{\phi} \]
  is an isomorphism of\/ $\crp{F}$\nbd algebras.
\end{lemma}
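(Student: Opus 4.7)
The plan is a direct verification using Galois descent and orthogonality of the central primitive idempotents $e_{\phi^\alpha}$ in $\crp{F}(\phi)L$. Before anything else I would note that $e_{(\phi,\crp{F})}$ is central in $\crp{F}G_\phi$: the $G$-action on $\crp{F}L$ commutes with the Galois action on coefficients, and since $G_\phi$ stabilizes $\phi$ it stabilizes $e_\phi$, hence each Galois conjugate $e_{\phi^\alpha}$, and therefore $e_{(\phi,\crp{F})}=\sum_\alpha e_{\phi^\alpha}$. So the source $\crp{F}G_\phi e_{(\phi,\crp{F})}$ really is a subalgebra.

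To see that the map is a well-defined $\crp{F}$-algebra homomorphism, I would use that the same $G_\phi$-invariance makes $e_\phi$ central in $\crp{F}(\phi)G_\phi$, which gives $(ae_\phi)(be_\phi)=abe_\phi$; and orthogonality of the distinct Galois conjugates yields $e_{(\phi,\crp{F})}e_\phi=e_\phi$, so the unit $e_{(\phi,\crp{F})}$ of the source goes to the unit $e_\phi$ of the target. For injectivity I would apply the standard Galois trick: if $ae_\phi=0$ with $a\in \crp{F}G_\phi e_{(\phi,\crp{F})}$, then since $a$ has coefficients in $\crp{F}$ it is fixed by every $\alpha\in\Gal(\crp{F}(\phi)/\crp{F})$, so applying $\alpha$ gives $ae_{\phi^\alpha}=0$; summing yields $ae_{(\phi,\crp{F})}=0$, and $a=ae_{(\phi,\crp{F})}=0$.

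For surjectivity I would construct a preimage by Galois averaging: given $b\in\crp{F}(\phi)G_\phi e_\phi$, set
\[
   a \;=\; \sum_{\alpha\in\Gal(\crp{F}(\phi)/\crp{F})} b^\alpha ,
\]
with $\alpha$ acting on coefficients. Galois invariance forces $a\in\crp{F}G_\phi$; each summand $b^\alpha$ lies in $\crp{F}(\phi)G_\phi e_{\phi^\alpha}$, so by orthogonality $a=ae_{(\phi,\crp{F})}$ (placing $a$ in the source algebra) and $ae_\phi=be_\phi=b$. I do not anticipate a substantial obstacle: the whole argument is Galois descent combined with idempotent orthogonality, and the only point requiring care is bookkeeping about which algebra each element lives in.
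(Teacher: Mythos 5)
Your proof is correct: the Galois-descent argument (centrality and $G_\phi$-invariance of $e_{(\phi,\crp{F})}$ and $e_\phi$, orthogonality of the distinct conjugates $e_{\phi^{\alpha}}$ for injectivity, and Galois averaging for surjectivity) is exactly the standard verification of this fact. The paper itself gives no argument but simply cites \cite[Lemma~5.3]{ladisch10pre}, and your write-up supplies precisely the proof one expects there, with no gaps.
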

\begin{proof}
  \cite[Lemma~5.3]{ladisch10pre}
\end{proof}
The following notation will be convenient:
 Let $L \nteq G$ and $e \in \Z(\crp{F}L)$ be a primitive idempotent.
 Let $G_e= \{g\in G \mid e^g =e\}$ and write $e^G$ for the
 idempotent defined by
 $e^G:= \T_{G_e}^G(e) = \sum_{g\in [G:G_e]}e^g$.
 Finally, given an idempotent $e$ of $\Z(\crp{F}L)$, we set
 \[ \Irr(G\mid e) = \{\chi\in \Irr G \mid \chi(e)\neq  0\}.\]
The following result is also well known:
\begin{prop}\label{p:cliffordcorr}
  Set  $e= e_{ (\phi, \crp{F}) }$ and $f= e^G$
  and let $T=G_e$ be the inertia group of $e$.
  Then\/ $\crp{F}Gf \iso \mat_{\abs{G:T}}(\crp{F}Te)$.
  Induction defines a bijection between\/
  $\Irr(T\mid e)$ and\/ $\Irr(G\mid f)$ that preserves field
  of values and Schur indices over\/ $\crp{F}$.
\end{prop}
In general, $G_{\phi}$ may be smaller than $G_e=T$.
For $\xi\in \Irr(G_{\phi}\mid \phi)$, the field
$\crp{F}(\xi^{T})$ is  contained  in $\crp{F}(\xi)$, but may be
strictly smaller. If this happens, the Schur index of $\xi^T$
over $\crp{F}(\xi^T)$ may be bigger
than that of $\xi$.
\begin{defi}
  Let $L\nteq G$ and $\phi\in \Irr L$.
  We say that $\phi$ is \emph{semi-invariant} in $G$
  over the field $\crp{F}$,
  if $e_{(\phi, \crp{F})}$ is invariant in $G$.
  If $\phi$ is semi-invariant over $\rats$, then we  say
  it is semi-invariant.
\end{defi}
This definition is equivalent to the one given
by Isaacs~\cite[Definition 1.1]{i81b}
(cf.~\cite[Lemma~5.6]{ladisch10pre}).
\begin{lemma}\label{l:fnsg}
  Let $L\nteq G$ and $\phi\in \Irr L$ be
  semi-invariant over\/ $\crp{F}$.
  Set $\Gamma=\Gal(\crp{F}(\phi)/\crp{F})$.
  \begin{enums}
  \item For every $g\in G$ there is a unique
        $\alpha_g\in \Gamma$ such that
        $\phi^{g \alpha_g}=\phi$.
  \item The map $g\mapsto \alpha_g$ is a group homomorphism from
        $G$ into $\Gamma$ with kernel $G_{\phi}$.
  \item \label{i:centchgalois}
        For $g\in G$ and $z\in \Z(\crp{F}L )$ we have
        $\omega_{\phi}(z^g) = \omega_{\phi}(z)^{\alpha_g}$, where
        $\omega_{\phi}\colon \Z(\crp{F}L)\to \crp{F}(\phi)$ is the
        central character associated with $\phi$.
  \end{enums}
\end{lemma}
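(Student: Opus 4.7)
The plan is to build on Lemma~\ref{l:Traceidempotent}, which gives $e_{(\phi,\crp{F})} = \sum_{\alpha \in \Gamma}(e_\phi)^\alpha$. The summands are the distinct primitive central idempotents of $\crp{F}L$ corresponding to the Galois orbit of $\phi$, namely the $e_{\phi^\alpha}$ as $\alpha$ runs through $\Gamma$. Semi-invariance says this sum is $G$-invariant, so $G$ permutes the set $\{e_{\phi^\alpha} : \alpha \in \Gamma\}$ by conjugation. Because $\Gamma = \Gal(\crp{F}(\phi)/\crp{F})$ acts faithfully, hence regularly, on the Galois orbit of $\phi$, for every $g \in G$ there is a unique $\alpha_g \in \Gamma$ with $(e_\phi)^g = (e_\phi)^{\alpha_g^{-1}}$; equivalently, $\phi^{g\alpha_g} = \phi$. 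This yields~(i).

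For~(ii), the key observation is that the $G$\nbd action on $\crp{F}L$ by conjugation and the $\Gamma$\nbd action on $\crp{F}L$ by field automorphisms on coefficients commute (they operate on independent pieces of the datum). A one-line chase through the identity $\phi^{gh} = (\phi^g)^h$, applying the Galois twists on both sides and then invoking the uniqueness from~(i), yields $\alpha_{gh} = \alpha_g\alpha_h$. The kernel is $\{g : \phi^g = \phi\} = G_\phi$ by the defining property of $\alpha_g$.

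For~(iii), I would start from the identity $ze_\phi = \omega_\phi(z)\, e_\phi$ that defines the central character. Conjugating by $g$ fixes the scalar $\omega_\phi(z) \in \crp{F}(\phi)$ and sends $e_\phi$ to $(e_\phi)^g = e_{\phi^g}$, so $\omega_{\phi^g}(z^g) = \omega_\phi(z)$. A parallel computation, applying $\alpha \in \Gamma$ to coefficients of the same defining identity (and using that $\alpha$ fixes $\crp{F}$, so fixes $z \in \Z(\crp{F}L)$ itself), shows $\omega_{\phi^\alpha}(z) = \omega_\phi(z)^\alpha$ for every $z \in \Z(\crp{F}L)$. Applying this with $\alpha = \alpha_g^{-1}$, which by~(i) satisfies $\phi^{\alpha_g^{-1}} = \phi^g$, and combining with the previous identity gives $\omega_\phi(z^g)^{\alpha_g^{-1}} = \omega_\phi(z)$, which is the claim.

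The main obstacle is purely bookkeeping: getting the inverses and the directions of the two actions right so that $g \mapsto \alpha_g$ turns out to be a homomorphism and not an antihomomorphism, and keeping straight that $\Gamma$ fixes the $\crp{F}$\nbd coefficients inside $\Z(\crp{F}L)$ while acting nontrivially on the image of $\omega_\phi$ in $\crp{F}(\phi)$. Once a consistent convention is fixed at the outset (e.g.\ defining $\alpha_g$ by $\phi^g = \phi^{\alpha_g^{-1}}$), all three parts are essentially formal.
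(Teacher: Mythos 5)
Your proof is correct and follows the standard route: part~(i) from the $G$\nbd invariance of $e_{(\phi,\crp{F})}=\sum_{\alpha\in\Gamma}e_{\phi^{\alpha}}$ together with the regular action of $\Gamma$ on the Galois orbit of $\phi$, part~(ii) from the commuting of the two actions plus uniqueness, and part~(iii) by conjugating and Galois-twisting the defining relation $ze_{\phi}=\omega_{\phi}(z)e_{\phi}$. The paper itself gives no argument (it only cites Isaacs and the author's preprint), and what you wrote is essentially the proof found there, so there is nothing to add.
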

\begin{proof}
  \cite[Lemma~2.1]{i81b},\cite[Lemma~5.7]{ladisch10pre}
\end{proof}

\section{Main theorem}\label{sec:main}
For convenience, we introduce some terminology.
\begin{defi}
  A quintuple $(G,K,L,\theta, \phi)$ is called a semi-invariant
  character five, if
  $G$ is a finite group, $L\leq K$ are normal subgroups of $G$,
  and the characters
  $\phi\in \Irr L$ and $\theta \in \Irr K$ are fully ramified
  with respect to each other
  and semi-invariant in $G$.
\end{defi}
As the attentive reader will have remarked,
this terminology is  inconsistent with
Definition~\ref{d:cfive}, since a
semi-invariant character five is not necessarily a
character five in the sense of Definition~\ref{d:cfive}.
It would have been more consistent to speak of
``invariant character fives'',
``semi-invariant character fives'' and
``character fives'' (not necessarily semi-invariant).
To avoid any ambiguity, we will speak of invariant/semi-invariant
character fives from now on.

We will need one further hypothesis.
\begin{defi}\label{d:strong}
  A semi-invariant character five
  $(G,K,L,\theta, \phi)$
  with $K/L$ abelian is said to be
  \emph{strongly controlled}
  if there is $N\nteq G$, such that the following hold:
  \begin{enums}
  \item $K\leq N\leq G_{\phi}$,
  \item $\C_{K/L}(N)= 1$,
  \item $(\abs{N/\C_N(K/L)}, \abs{K/L})=1$.
  \end{enums}
\end{defi}
Thus the subgroup of the automorphism group of $K/L$ induced by
$N$ acts coprimely and fixed point freely on $K/L$.
I do not know whether such an assumption is really necessary for
the next result. On the other hand, the assumption that
$\abs{K/L}$ is odd \emph{is} necessary, even
for strongly controlled character fives.
\begin{thm}
  \label{t:oddabelianschur}
  Let $(G,K,L,\theta,\phi)$ be a strongly controlled
  semi-\hspace{0pt}invariant
  character five, such that
  $K/L$ is abelian of odd order.
    Set $f= e_{(\phi, \rats)}$.
  Then there is $H\leq G$ such that
  $KH=G$, $K\cap H = L$, every element of $H_{\phi}$ is
  $K$-$\phi$-good,
  and $\rats G f \iso \mat_n(\rats H f)$
  as $G/K$\nbd graded algebras
  (with $n=\sqrt{\abs{K/L}} = \theta(1)/\phi(1)$).
\end{thm}
Suppose $K\leq U\leq G$ and set $V=U\cap H$.
The isomorphism of the theorem restricts to an isomorphism
$\rats Uf\iso\mat_n(\rats V f)$.
These isomorphisms yield character correspondences.
(See the discussion in~\cite[Section~2]{ladisch10pre}
 or~\cite[Theorem~3.4]{marcus08}.)
While there is no canonical choice for the
isomorphism of Theorem~\ref{t:oddabelianschur}, there
are choices that lead to a canonical choice of the bijection
between
$\compl[\Irr( G \mid f )]$
and
$\compl[\Irr(H\mid f)]$.
(The isomorphism of Theorem~\ref{t:oddabelianschur}
 is unique up to (inner) isomorphisms
 of $(\rats K f)^L\iso \mat_n(\rats(\phi))$.)
\begin{prop}
  \label{p:oddabschur_sup}
  Assume the situation of Theorem~\ref{t:oddabelianschur}.
  For every $U\leq G$ with $K\leq U$, there is an isomorphism
  \[\iota\colon \compl[\Irr(U\mid f)]
    \to \compl[U\cap H \mid f].
  \]
  The union $\iota$
  of these isomorphisms has the following properties:
  \begin{enums}
  \item \label{i:c_irr}
        $\chi \in \Irr(U\mid f)$ if and only if
        $\chi^{\iota} \in \Irr(U\cap H\mid f)$.
  \item \label{i:c_isometr}$(\chi_1^{\iota}, \chi_2^{\iota})_{U\cap H}
         = (\chi_1, \chi_2)_U$ for
         $\chi_1$, $\chi_2\in \compl[\Irr(U\mid f)]$.
  \item \label{i:c_prop}$\chi(1)= n \chi^{\iota}(1)$.
  \item \label{i:c_resuv}
        $(\chi_{U_1})^{\iota} = (\chi^{\iota})_{U_1\cap H}$
        for $K\leq U_1\leq U_2\leq G$ and
        $\chi\in \compl[\Irr(U_2\mid f)]$.
  \item \label{i:c_induv}$(\tau^{U_2})^{\iota}= (\tau^{\iota})^{U_2\cap H}$ for
        $K\leq U_1 \leq U_2 \leq G$ and
        $\tau \in \compl[\Irr(U_1\mid f)]$.
  \item \label{i:c_conj}$(\chi^{\iota})^h = (\chi^h)^{\iota}$ for $h\in H$.
  \item \label{i:c_lin}$(\beta\chi)^{\iota} = \beta \chi^{\iota}$
        for all $\beta \in \compl[\Irr(U/K)]$.
  \item \label{i:c_aut}If $\alpha$ is a field automorphism, then
        $(\chi^{\alpha})^{\iota} = (\chi^{\iota})^{\alpha}$;
        and \label{i:c_field}
        $\rats(\chi)= \rats(\chi^{\iota})$.
  \item \label{i:c_brauer}
        $[\chi]_{\rats} = [\chi^{\iota}]_{\rats}$
        for $\chi\in \Irr(U\mid f)$.
  \item \label{i:c_res} If $U\leq G_{\phi}$ and
        $\chi \in \Irr( U\mid \phi)$, then
        $\chi_{U\cap H} =\psi\chi^{\iota}$,
        where $\psi$ is the canonical magic character
        associated with the invariant character five
        $(G_{\phi}, K, L, \theta, \phi)$.
  \end{enums}
  Moreover, $\iota$  is  determined uniquely by some of these properties
  (namely, by linearity and~\ref{i:c_induv},~\ref{i:c_aut} and~\ref{i:c_res}).
\end{prop}
\begin{proof}[Proof (Uniqueness)]
  Let $\iota$ and $\iota_1$ be two such isometries and take
   a subgroup $U$ with $K\leq U\leq G$
  and
  $\chi\in \Irr(U\mid \phi)$.
  By Clifford correspondence,
  $\chi= \tau^U$ for a unique $\tau\in \Irr(U_{\phi}\mid \phi)$.
  By \ref{i:c_induv}, it follows
  that $\chi^{\iota}   =(\tau^U)^{\iota}
                         = (\tau^{\iota})^U$ and
  similarly
  for $\iota_1$.
  By~\ref{i:c_res},
  we have
  $\tau^{\iota} = (1/\psi)\tau_{U_{\phi}\cap H}= \tau^{\iota_1}$.
  Thus $\chi^{\iota}=\chi^{\iota_1}$.
  Finally, for $\alpha\in\Aut(\rats(\phi))$ any element
  of $\Irr(U\mid \phi^{\alpha})$
  has the form $\chi^{\alpha}$ for some
  $\chi\in \Irr(U\mid\phi)$, and we have
  $(\chi^{\alpha})^{\iota}
    =(\chi^{\iota})^{\alpha}
    =(\chi^{\iota_1})^{\alpha}
   = (\chi^{\alpha})^{\iota_1}$.
  Since
  $\Irr(U\mid f)= \bigcup_{\alpha\in \Aut(\rats(\phi))}
                   \Irr(U\mid \phi^{\alpha})$,
  it follows that $\iota=\iota_1$.
  This shows uniqueness.
\end{proof}
The existence will be proved in Section~\ref{sec:proof}, together
with Theorem~\ref{t:oddabelianschur}.
In Section~\ref{sec:appl},
we use these results to show that the
Isaacs half of the Glauberman-Isaacs correspondence preserves
Schur indices (among other things).

\section{Existence and uniqueness of the complement}
\label{sec:compl}
In this section, we show that
if a semi-invariant character five is strongly controlled, then
there is a complement $H/L$ to $K/L$ in $G/L$,
and that  $H$ is determined up to
conjugacy by some weak additional condition.
This result will not be needed in the proof of
Theorem~\ref{t:oddabelianschur} and
Proposition~\ref{p:oddabschur_sup}. In fact, in the proof of the
latter results, we will give another construction of
the supplement $H$.

We need a general lemma
about the bilinear form associated with $\phi$.
\begin{lemma}\label{l:skpconj}
 Let $L\nteq G$, let $\phi\in \Irr G$ and
 $x,y\in G_{\phi}$ with $[x,y]\in L$.
 \begin{enums}
 \item If $g\in G$, then
       $\skp{ x^g, y^g }_{\phi^g }= \skp{x, y}_{\phi}$.
 \item If $\alpha \in \Aut \rats(\phi)$, then
       $\skp{x,y}_{\phi^{\alpha}}= \skp{x,y}_{\phi}^{\alpha}$.
 \end{enums}
\end{lemma}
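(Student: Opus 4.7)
My plan is to use the first (idempotent-based) definition of $\eskp_\phi$, namely the defining equation
\[\skp{x,y}_\phi e_\phi = [x,y][c_y,c_x],\]
and transport it under conjugation by $g$ and under $\alpha$ respectively. The key observation in both cases is that the Skolem--Noether elements $c_x$ are only determined up to scalars, so I am free to pick a convenient representative on the transformed side.

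For part (i), start by noting that $L^g = L$ and $\phi^g \in \Irr L$, with central primitive idempotent $e_{\phi^g} = (e_\phi)^g$. Conjugation by $g$ is an algebra isomorphism $\compl L e_\phi \to \compl L e_{\phi^g}$. If $c_x \in \compl L e_\phi$ implements conjugation by $x$ on $\compl L e_\phi$, then $(c_x)^g \in \compl L e_{\phi^g}$ implements conjugation by $x^g$ there, so $(c_x)^g$ is a legitimate choice for ``$c_{x^g}$ with respect to $\phi^g$''. Since $x,y \in G_\phi$, we also have $x^g, y^g \in G_{\phi^g}$ and $[x^g, y^g] = [x,y]^g \in L$. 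Conjugating the defining equation by $g$ yields
\[\skp{x,y}_\phi e_{\phi^g} = [x^g,y^g]\bigl[(c_y)^g, (c_x)^g\bigr] = \skp{x^g, y^g}_{\phi^g}\, e_{\phi^g},\]
which gives (i). Here I use that scalars in $\crp{F}$ are fixed by conjugation, and that the defining equation is independent of the choice of the $c$'s.

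For part (ii), let $\alpha \in \Aut \rats(\phi)$, extended to act on $\compl L$ coefficient-wise. Then $\alpha$ is an algebra automorphism commuting with the conjugation action of $G$ on $\compl L$, and it sends $e_\phi$ to $e_{\phi^\alpha}$. Hence if $c_x$ implements conjugation by $x$ on $\compl L e_\phi$, then $(c_x)^\alpha$ implements conjugation by $x$ on $\compl L e_{\phi^\alpha}$, so is a valid choice for the corresponding element with respect to $\phi^\alpha$. Applying $\alpha$ to $\skp{x,y}_\phi e_\phi = [x,y][c_y, c_x]$, noting that $\alpha$ fixes group elements of $L$ (so $[x,y]$ is unchanged), gives
\[\skp{x,y}_\phi^\alpha\, e_{\phi^\alpha} = [x,y]\bigl[(c_y)^\alpha, (c_x)^\alpha\bigr] = \skp{x,y}_{\phi^\alpha}\, e_{\phi^\alpha},\]
yielding (ii).

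There is essentially no obstacle here; the whole proof is a two-line unwinding of the definition, with the only subtlety being the verification that the transformed Skolem--Noether elements $(c_x)^g$ and $(c_x)^\alpha$ land in the correct algebras $\compl L e_{\phi^g}$ and $\compl L e_{\phi^\alpha}$ and implement the right automorphism there. This is automatic from the fact that $g$-conjugation and $\alpha$ are ring automorphisms of $\compl L$ sending $e_\phi$ to the appropriate new idempotent.
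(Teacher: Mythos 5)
Your proposal is correct and is essentially the paper's own proof: both parts are obtained by applying the relevant ring automorphism (conjugation by $g$, respectively the coefficient-wise extension of $\alpha$) to the defining equation $\skp{x,y}_{\phi}e_{\phi}=[x,y][c_y,c_x]$, after checking that the transported Skolem--Noether elements implement the right inner automorphisms. The only cosmetic difference is that for part (ii) you should work over $\rats(\phi)L$ (where $c_x$ and $e_\phi$ may be taken to live and where $\alpha$ is actually defined) rather than extending $\alpha$ to $\compl L$, which is exactly what the paper does by setting $\crp{E}=\rats(\phi)$.
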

\begin{proof}
  Let $\crp{E}=\rats(\phi)$.
  Remember that
  $\skp{x,y}_{\phi}e_{\phi} = [x,y][c_y, c_x]$, where
  $c_x\in \crp{E}Le_{\phi}$ is such that
  $a^x = a^{c_x}$ for all $a\in \crp{E}Le_{\phi}$, and similar for
  $c_y$.
  If $g\in G$, then $c_x^g\in (\crp{E}L e_{\phi})^g = \crp{E}L
  e_{\phi^g}$.
  Any  $b\in \crp{E}Le_{\phi^{g}}$ can be written as $b=a^g$
  with $a\in \crp{E}Le_{\phi}$.
  Thus
  \[b^{c_x^g} = (a^g)^{c_x^g}= a^{c_x g } = a^{xg}= a^{gx^g}=
  b^{x^g}.
  \]
  It follows that
  \begin{align*}
    \skp{ x^g, y^g}_{\phi^g} e_{\phi^g}
      &= [x^g, y^g][c_y^g, c_x^g]
       = ([x,y][c_y, c_x])^g
      \\
      &= (\skp{x,y}_{\phi} e_{\phi})^g
       = \skp{x,y}_{\phi} e_{\phi^g}.
  \end{align*}
  The first assertion follows.
  The proof of the second is similar:
  We may extend  $\alpha_{\crp{E}}$ naturally to
  an automorphism of $\crp{E}G$, acting trivially on $G$.
   Then we get
  \begin{align*}
    \skp{x,y}_{\phi^{\alpha}}e_{\phi^{\alpha}}
      &= [x,y][c_y^{\alpha}, c_x^{\alpha}]
       = ([x,y][c_y, c_x])^{\alpha}
      \\
      &= (\skp{x,y}_{\phi} e_{\phi})^{\alpha}
       = \skp{x,y}_{\phi}^{\alpha} e_{\phi^{\alpha}}.
  \end{align*}
  The proof follows.
\end{proof}
The arguments in the proof of the next result extend
those of Isaacs~\cite[p.~304--5]{i82}
for invariant character fives.
\begin{prop}\label{p:existenceuniquenessofh}
  Let $(G,K,L,\theta, \phi)$ be a strongly controlled
  character five with $K/L$ abelian.
  Then there is a unique conjugacy class of subgroups
  $H\leq G$ such that
  $HK=G$, $H\cap K=L$ and
  every element of $H\cap \C_{N}(K/L)$ is $K$-$\phi$-good.
\end{prop}
\begin{proof}
  Let $C=\C_N(K/L)$. Observe that then
  $\eskp_{\phi}$ is defined on
  $C/L \times K/L$. Let
  \[ B=\{ c\in C\mid \skp{c, k}_{\phi}= 1
                 \text{ for all } k\in K\}.\]
  We claim that $B\nteq G$.
  Let $b\in B$ and $g\in G$.
  There is $\alpha\in
  \Aut(\rats(\phi))$ such that
  $\phi^{\alpha g} = \phi $.
  Let $k\in K$. Using both parts of
  Lemma~\ref{l:skpconj}, we get
  \begin{align*}
    \skp{b^g, k^g}_{\phi}
      &= \skp{b^g, k^g }_{\phi^{\alpha g}}
       = \skp{ b, k }_{\phi^{\alpha}}
       = \skp{ b, k }_{\phi}^{\alpha}
       =1.
  \end{align*}
  Since $k^g\in K^g=K$ was arbitrary, it follows that $b^g\in B$.
  This establishes that $B\nteq G$.

  Via $\eskp_{\phi}$,
  the factor group $C/B$ is isomorphic to a subgroup of
  $\Lin(K/L)$, and so $\abs{C/B}\leq \abs{K/L}$.
  Since $\phi$ is fully ramified in $K/L$,
  the form $\eskp_{\phi}$ is
  nondegenerate on $K/L$,
  and thus we have $B\cap K = L$.
  Therefore $\abs{K/L}= \abs{BK/B}\leq \abs{C/B}$.
  It follows $BK=C$ and $C/B\iso K/L$.
  Since $\abs{C/B}=\abs{K/L}$ and $\abs{N/C}$ are coprime, the
  group $C/B$ has a complement, $M/B$, in $N/C$.
  Let $H= \N_G(M)$.
  By the Frattini-argument,
  $G=HN= HMC =H BK =HK $.
  Moreover, we have $[H\cap K, M]\leq K\cap M = L$, and so
  $(H\cap K)/L \leq \C_{K/L}(M)=\C_{K/L}(N/C)=1$, so that
  $H\cap K = L$.
  Since $K/L\iso C/B$ as group with $M$-action, the same argument
  shows that
  $H\cap C= B$.
  By definition of $B$, every element of $B$ is $K$-$\phi$-good.

  For uniqueness, assume that $U$ is
  another subgroup having the properties
  in the proposition.
  Then $U\cap C\leq B$, since $U\cap C$ is good.
  Since $C= C\cap UK = (C\cap U)K$, it follows that $C\cap U =B$.
  Since $N= N\cap UC = (N\cap U)C$, it follows that $(N\cap U)/B$
  is a complement of $C/B$ in $N/B$.
  By the conjugacy part of the Schur-Zassenhaus Theorem,
  $N\cap U= M^c$ with $c\in C$.
  It follows that $H^c = \N_G(M^c)\geq U$, and thus
  $\N_G(M^c)=U$ as claimed.
\end{proof}
In the special case where
$(\abs{N/K}, \abs{K/L})=1$,
the supplement $H$ is determined up to conjugacy by the properties
$HK=G$ and $H\cap K =L$.
This can be proved using standard, purely group theoretical
arguments and is well known.

\section[Magic crossed representations]{Magic crossed representations for semi-invariant character fives}
\label{sec:magiccrossed}
We need to review the theory
of magic crossed representations, which we will use to prove the
results of Section~\ref{sec:main}.
For convenient reference, let us fix the following assumptions and
notation:
\begin{hyp}\label{hyp:sef}\hfill
 \begin{enums}
 \item $(G,K,L,\theta,\phi)$ is a semi-invariant character five,
 \item $f=e_{(\phi,\rats)}$, $\crp{E}=\Z(\rats K f)$  and
       $S=(\rats K f)^L$,
 \item for $g\in G$, let $ \alpha_g\in \Aut\crp{E}$ be the
       automorphism of $\crp{E}$ induced by conjugation with $g$,
       and set $\crp{F}=\crp{E}^G$.
 \end{enums}
\end{hyp}
The isomorphism $\rats K f\iso \rats(\phi)K e_{\phi}$
of Lemma~\ref{l:fe_iso},
given by $a\mapsto ae_{\phi}$,
sends
$\Z(\rats K f) $ onto
$\Z(\rats(\phi)K e_{\phi})=\rats(\phi)e_{\phi}$.
The same is true for
the centers of $\rats L f$ and $S$.
Thus
$\Z(\rats K f)=\Z(\rats L f) = \Z(S)\iso \rats(\phi)$.
The combination of Lemma~\ref{l:fe_iso} and
Lemma~\ref{l:s-einfach} yields that $S$ is central simple over
$\crp{E}$.

$G$ acts on $\crp{E}$ by conjugation.
By Lemma~\ref{l:fnsg}, Part~\ref{i:centchgalois}, the kernel of
this action is $G_{\phi}$, the inertia group of $\phi$.
Thus we have proved:
\begin{lemma}\label{l:centers}
  We have
  $\crp{E}=\Z(\rats L f) = \Z(S)\iso \rats(\phi)$,
  and $S$ is central simple over $\crp{E}$.
  $G$ acts on $\crp{E}$ with kernel $G_{\phi}$.\hfill \qedsymbol
\end{lemma}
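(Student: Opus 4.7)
The plan is to transfer everything into the $\rats(\phi)$\nbd setting via Lemma~\ref{l:fe_iso}, and then apply Lemmas~\ref{l:s-einfach} and~\ref{l:fnsg}.

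Since $K\leq G_{\phi}$, the map $a\mapsto ae_{\phi}$ of Lemma~\ref{l:fe_iso} restricts to isomorphisms $\rats Kf \iso \rats(\phi)Ke_{\phi}$ and $\rats Lf \iso \rats(\phi)Le_{\phi}$ of $\rats$\nbd algebras. (The images visibly equal $\rats Ke_{\phi}$ and $\rats Le_{\phi}$; these are already $\rats(\phi)$\nbd modules, because for $\alpha\in\rats(\phi)$ one has $\alpha e_{\phi}=ze_{\phi}$ for any $z\in\Z(\rats L)$ with $\omega_{\phi}(z)=\alpha$.) Under this isomorphism $S=(\rats Kf)^L$ corresponds to $(\rats(\phi)Ke_{\phi})^L$, and by Lemma~\ref{l:s-einfach} the latter is central simple over $\rats(\phi)$ of dimension $\abs{K/L}$.

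Next, because $\phi$ is fully ramified in $K$, Lemma~\ref{l:fullyramifiedcond} gives $\theta_L=n\phi$ and that $\theta$ vanishes off $L$. Therefore $\rats(\theta)=\rats(n\phi)=\rats(\phi)$, so $e_{\phi}=e_{\theta}$ is a central primitive idempotent of $\rats(\phi)K$, and $\rats(\phi)Ke_{\phi}$ is a simple algebra with center $\rats(\phi)e_{\phi}$; similarly $\rats(\phi)Le_{\phi}$ has center $\rats(\phi)e_{\phi}$. Under the isomorphism from the previous step, the three subalgebras $\Z(\rats Kf)$, $\Z(\rats Lf)$ and $\Z(S)$ thus all map to the same subring $\rats(\phi)e_{\phi}$; since the map is a bijection, they coincide as subsets of $\rats Kf$. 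This common subring is $\crp{E}$, and central simplicity of $S$ over $\crp{E}$ transfers via the isomorphism.

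For the final claim, semi-invariance of $\phi$ means $f$ is $G$\nbd invariant, so $G$ acts on $\rats Kf$ by conjugation and preserves $\crp{E}$. The central character $\omega_{\phi}$ restricts to a ring isomorphism $\crp{E}\to\rats(\phi)$, and Lemma~\ref{l:fnsg}\ref{i:centchgalois} gives $\omega_{\phi}(z^g)=\omega_{\phi}(z)^{\alpha_g}$ for $g\in G$ and $z\in\crp{E}$. Thus $\omega_{\phi}$ intertwines the $G$\nbd action on $\crp{E}$ with the Galois action $g\mapsto\alpha_g$ on $\rats(\phi)$; its kernel therefore equals the kernel of $g\mapsto\alpha_g$, which is $G_{\phi}$ by Lemma~\ref{l:fnsg}. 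The whole lemma is bookkeeping; the only conceptual point, not a real obstacle, is the observation $\rats(\theta)=\rats(\phi)$, which keeps $e_{\theta}$ primitive after extension to $\rats(\phi)$.
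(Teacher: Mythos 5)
Your proof is correct and follows essentially the same route as the paper: transfer everything to the $\rats(\phi)$\nbd setting via Lemma~\ref{l:fe_iso}, identify all three centers with $\rats(\phi)e_{\phi}$ and get central simplicity of $S$ from Lemma~\ref{l:s-einfach}, and read off the kernel of the $G$\nbd action from Lemma~\ref{l:fnsg}\ref{i:centchgalois}. The extra details you supply (e.g.\ $\rats(\theta)=\rats(\phi)$ via full ramification) are correct routine verifications that the paper leaves implicit.
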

To prove Theorem~\ref{t:oddabelianschur},
we will procede as follows:
First, we show that
$S\iso \mat_n(\crp{E})$.
Thus we find a full set of
matrix units, $E$, in $S$.
Then $\rats G f \iso \mat_n(C)$, where
$C=\C_{\rats G f}(E)$ is
the centralizer of $E$ in $\rats G f$.
Second, we show that
$C\iso \rats H f$, where $H/L$ is a complement of $K/L$ in $G/L$.
To do this, we need a magic crossed representation, which
generalizes a magic representation
to the semi-invariant case~\cite{ladisch10pre}.
We review this concept now.

Clearly, $\Aut \crp{E}$ acts naturally on $\mat_n(\crp{E})$ by
acting on the entries of a matrix.
Thus, if an isomorphism $S\iso\mat_n(\crp{E})$ is given,
it yields an action of
$\Aut\crp{E}$ on $S$.
To be more concrete,
let $\{e_{ij}\mid 1\leq i,j\leq n\}$ be a set of matrix units in
$S$.
For $\alpha\in \Aut \crp{E}$, define
$\widehat{\alpha}\in \Aut S$ by
$\left(\sum_{i,j} s_{ij}e_{ij}\right)^{\widehat{\alpha}}
        = \sum_{i,j} s_{ij}^{\alpha}e_{ij}$ for
$s_{ij}\in \crp{E}$.
For convenient reference, we summarize our assumptions and notation:
\begin{hyp}\label{hyp:mat}
 Assume Hypothesis~\ref{hyp:sef} and the following:
 \begin{enums}
 \item $H\leq G$ with $G=HK$ and $H\cap K=L$,
 \item $S\iso \mat_n(\crp{E})$,
 \item $\Aut\crp{E}\ni \alpha \mapsto \widehat{\alpha}\in \Aut S$
       denotes the action of $\Aut\crp{E}$ on $S$ with respect to
       a fixed set of matrix units $E$ in $S$.
 \end{enums}
\end{hyp}
Now let us recall the  definition of a magic crossed representation,
adapted to our situation:
\begin{defi}
  In the situation of~\ref{hyp:mat},
  $\sigma\colon H/L \to S$ is a
  \emph{magic}  crossed
  representation
  (with respect to $h \mapsto \widehat{\alpha_h}$),
  if, for all $u$, $v\in H/L$ and $s\in S$, we have
  \[ \sigma(u)^{\widehat{\alpha_v}}\sigma(v)= \sigma(uv)
         \quad \text{and}\quad
         s^u = s^{\widehat{\alpha_v}\sigma(u)}.
  \]
\end{defi}
If $\sigma\colon H/L \to S$ is a magic crossed
representation,
  then the linear map
  \[ \kappa\colon \rats H  \to C=\C_{\rats G f}(E),\quad
     \text{defined by} \quad
     h \mapsto  h\sigma(Lh)^{-1} \text{ for } h\in H ,\]
  is an algebra-homomorphism and induces an isomorphism
  $ \rats H f
   \iso C$~\cite[Theorem~6.11]{ladisch10pre}.
Thus, since $\rats G f \iso \mat_n(C)$,
Theorem~\ref{t:oddabelianschur} will follow if we can show
that there is a magic crossed representation.

Moreover, every magic crossed representation determines a character
correspondence $\iota(\sigma)$ between
$\Irr(U\mid f)$ and
$\Irr(U\cap H \mid f)$ for each $K\leq U\leq G$ having
Properties~\ref{i:c_irr} to~\ref{i:c_brauer} of
Proposition~\ref{p:oddabschur_sup}~\cite[Theorem~6.13]{ladisch10pre}.
Finally, if $\sigma \colon H/L \to S$ is a magic crossed representation,
then the map
$\sigma_{\phi}\colon H_{\phi} \to (\rats(\phi)K e_{\phi})^L$
defined by $\sigma_{\phi}(Lh) =\sigma(Lh)e_{\phi}$ is a magic
representation
for the invariant character five
$(G_{\phi}, K, L, \theta, \phi)$, and for
$\chi\in \Irr(U_{\phi}\mid \phi)$ we have
$(\chi^U)^{\iota(\sigma)}
  = (\chi^{\iota(\sigma_{\phi})}
    )^{U\cap H}$~\cite[Proposition~6.15]{ladisch10pre}.
Here $\iota(\sigma_{\phi})$ means the character correspondence of
Theorem~\ref{t:corr}.
Thus to prove Theorem~\ref{t:oddabelianschur} and
Proposition~\ref{p:oddabschur_sup},
it suffices to show the following:
\begin{thm}\label{t:sigmacan}
  Assume the situation of
  Theorem~\ref{t:oddabelianschur}.
  Then the assumptions of Hypothesis~\ref{hyp:mat} hold,
  and
  there is a magic crossed representation
  $\sigma\colon H/L \to S$ such that
  $\sigma_{\phi}\colon H_{\phi}/L \to (\rats(\phi)K e_{\phi})^L$,
  defined by
  $\sigma_{\phi}(h)=\sigma(h)e_{\phi}$, is the canonical magic
  representation of the invariant character five
  $(G_{\phi}, K, L, \theta, \phi)$.
\end{thm}

\section{Proof of main theorem}\label{sec:proof}
The proof of Theorem~\ref{t:sigmacan} mimics that of
Theorem~\ref{t:oddabelian}.
Assume Hypothesis~\ref{hyp:sef}.
Then $S=(\rats K f)^L$ admits a natural grading by  the factor group $K/L$:
\[S= \bigoplus_{x\in K/L} S_x
  \quad \text{with}\quad
  S_{Lk}= S\cap \rats Lk.
\]
Moreover, every component $S_x$ contains units of $S$.
All this follows from the corresponding results for invariant
character fives, via Lemma~\ref{l:fe_iso}.
   Let
  \[\Omega = \bigdcup_{x\in K/L} (S_x\cap S^*)\]
  be the set of graded units of $S$.
 Then we have a central extension
  \[ \begin{CD}
       1 @>>> \crp{E}^* @>>> \Omega @>\eps >> K/L @>>> 1.
     \end{CD}
  \]
  The group $G$ acts on $\Omega$.
  Since $\phi$ is only semi-invariant, the action of $G$ on
  $\Z(\Omega)=\crp{E}^*$ may be nontrivial.

Now assume the situation of Theorem~\ref{t:sigmacan}.
Remember that we are given
a semi-invariant character five, $(G,K,L,\theta,\phi)$,
with $K/L$ abelian of odd order,
which is strongly controlled, that is,
there is $K\leq N\nteq G$ such that $N/\C_N(K/L)$ acts coprimely and
fixed point freely on $K/L$.
First we exhibit a subgroup $P\leq \Omega$ with similar
properties as in Lemma~\ref{l:admtripclosed}.
This is the only part of the proof of
Theorem~\ref{t:sigmacan} where we use the assumption that the
character five is strongly controlled.
The main idea in the proof of the following lemma
is taken from a paper of Turull~\cite{turull08}.
\begin{lemma}\label{l:pzmu}
  Set  \[ P = [\Omega, N]
    \quad\text{and}\quad
    Z= P\cap \crp{E}.\]
  Then the following hold:
  \begin{enums}
  \item \label{i:pzmu_sur}$P \Z(S)^* = \Omega$
        (equivalently, $P\to K/L$ is surjective).
  \item \label{i:pzmu_inv}$P^g=P$ for all $g\in G$.
  \item \label{i:pzmu_fin}$\abs{P}$ is finite and has the same exponent as $K/L$.
  \item \label{i:pzmu_coset}Every coset of $Z$ in $P$ contains an element
        $u$ with $Z\cap \erz{u}= \{1\}$.
\end{enums}
\end{lemma}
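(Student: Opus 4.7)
I will verify the four claims in turn, using throughout that $\Omega$ is a central extension of the abelian group $K/L$ by $\crp{E}^{*}$, hence nilpotent of class at most~$2$ with $[\Omega, \Omega] \subseteq \crp{E}^{*}$, and that commutators in $\Omega$ are given by the values of the form $\skp{\,\cdot\,, \,\cdot\,}_{\phi}$ (Lemma~\ref{l:twisted0}). Property~\ref{i:pzmu_inv} is immediate from $G$-invariance of both $\Omega$ and $N$. For~\ref{i:pzmu_sur}, the projection $\eps \colon \Omega \to K/L$ is $N$-equivariant, so $\eps(P) = [K/L, N]$; the strongly controlled hypothesis says that $N/\C_{N}(K/L)$ acts coprimely and fixed-point-freely on $K/L$, and the standard coprime decomposition then yields $[K/L, N] = K/L$, giving the surjection.

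The core work is~\ref{i:pzmu_fin}. Set $e := \exp(K/L)$, which is odd. In a nilpotent group of class two one has $(ab)^{e} = a^{e} b^{e} [b, a]^{\binom{e}{2}}$, and every commutator in $\Omega$ has order dividing $e$ (from $[\omega_{1}, \omega_{2}]^{e} = \skp{\eps(\omega_{1})^{e}, \eps(\omega_{2})}_{\phi} = 1$). Since $\binom{e}{2} = e(e-1)/2 \equiv 0 \pmod{e}$ for odd $e$, the correction term vanishes, so $a \mapsto a^{e}$ is a homomorphism on $\Omega$. Applied to a generating commutator $[\omega, n] = \omega^{-1}\omega^{n}$ of $P$, this yields $[\omega, n]^{e} = \omega^{-e}(\omega^{n})^{e} = \omega^{-e}(\omega^{e})^{n}$; now $\omega^{e}$ lies in $\crp{E}^{*}$ (because $\eps(\omega)^{e} = 1$) and $N \le G_{\phi}$ acts trivially on $\crp{E}$ by Lemma~\ref{l:centers}, so this equals $1$. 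Hence $\exp(P) \mid e$, and together with $\exp(P) \ge \exp(P/Z) = e$ (using~\ref{i:pzmu_sur}) we obtain equality. Finiteness then follows because $Z \subseteq \crp{E}^{*}$ embeds into the finite cyclic group $\mu_{e}(\crp{E})$ (present in $\crp{E}$ by Corollary~\ref{c:frval}), while $|P/Z| = |K/L|$ is finite.

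For~\ref{i:pzmu_coset} I first pin down $Z$ exactly. Since $K \le N$ we have $[\Omega, K] \subseteq P$; and each $k \in K$ acts on $\Omega$ as the inner automorphism by its image $ke_{\phi} \in \Omega$, so (since these images cover every $\crp{E}^{*}$-coset of $\Omega$) the group $[\Omega, K]$ coincides with $[\Omega, \Omega]$. The latter is generated by the values $\skp{x, y}_{\phi}$, which by nondegeneracy generate all of $\mu_{e}$: any proper subgroup $\mu_{d}$ containing the image would force $\skp{x^{d}, y}_{\phi} = 1$ for all $x, y \in K/L$, hence $x^{d} = 1$ for all $x$, contradicting $\exp(K/L) = e$. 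Combined with $Z \subseteq \mu_{e}$ from~\ref{i:pzmu_fin}, this gives $Z = \mu_{e}(\crp{E})$. Now given any $p \in P$ with $\eps(p) = x$ of order $d \mid e$, the element $p^{d}$ lies in $Z$ and satisfies $(p^{d})^{e/d} = p^{e} = 1$, so $p^{d} \in \mu_{e/d} = Z^{d}$; pick $\zeta \in Z$ with $\zeta^{d} = p^{-d}$ and set $u := \zeta p$. Then $\eps(u) = x$ and $u^{d} = \zeta^{d}p^{d} = 1$ (as $\zeta$ is central in $\Omega$), so $\erz{u} \cap Z = \{1\}$.

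The delicate point is the computation in~\ref{i:pzmu_fin} that $[\omega, n]^{e} = 1$: it uses the oddness of $e$ essentially (through $\binom{e}{2} \equiv 0 \pmod{e}$), and this is the sole place where the hypothesis that $|K/L|$ is odd enters.
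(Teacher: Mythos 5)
Your proof is correct. Parts~\ref{i:pzmu_sur}--\ref{i:pzmu_fin} follow essentially the same route as the paper: the coprime, fixed-point-free action gives $[\Omega,N]\crp{E}^{*}=\Omega$, and the class-two binomial identity together with the oddness of $e=\exp(K/L)$ shows that the $e$-th power map kills every generator $[\omega,n]$ of $P$ (the paper runs the identical computation, only phrased for the subgroups $\Omega_{k}=\{u\in\Omega\mid u^{k}\in\crp{E}\}$ for every odd $k$, not just $k=e$). For part~\ref{i:pzmu_coset} you take a genuinely different route. The paper reuses its filtration: $P\cap\Omega_{k}=[\Omega_{k},N]Z$ and every element of $[\Omega_{k},N]$ has $k$-th power $1$, so the coset $Zv$ with $\ord(Zv)=k$ already contains an element of $[\Omega_{k},N]$, which then has order exactly $k$. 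You instead pin down $Z$ exactly: $[\Omega,\Omega]$ is the subgroup of $\crp{E}^{*}$ generated by the values of $\eskp_{\phi}$, which by nondegeneracy of the form is the full (cyclic, order $e$) group of $e$-th roots of unity in $\crp{E}$, and $[\Omega,\Omega]=[\Omega,K]\subseteq[\Omega,N]=P$ forces $Z$ to be exactly that group; an arbitrary coset representative can then be corrected by a central $d$-th root, which exists because $Z$ is cyclic of order $e$. Both arguments work; the paper's is more uniform (one mechanism serves~\ref{i:pzmu_fin} and~\ref{i:pzmu_coset}), while yours yields the extra information $Z=\{z\in\crp{E}^{*}\mid z^{e}=1\}$ and hence $\abs{P}=e\,\abs{K/L}$. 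One phrase needs repair: $ke_{\phi}$ (in this section it should in any case be $kf$) is in general \emph{not} an element of $S$, let alone of $\Omega$, since it need not centralize $L$. What is true --- and what your argument actually needs --- is that conjugation by $k\in K$ on $S$ coincides with conjugation by any graded unit of $S_{Lk}$, because $L$ centralizes $S$ (cf.\ Lemma~\ref{l:adm_innsemi}), and these units cover $\Omega$ modulo $\crp{E}^{*}$; so the conclusion $[\Omega,K]=[\Omega,\Omega]$ stands.
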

\begin{proof}
  Since $N\nteq G$, it follows that $G$ normalizes $P$,
  this is~\ref{i:pzmu_inv}.

  The group $N$ acts on $\Omega$ and centralizes
  $\crp{E}=\Z(S)$, since $N\leq G_{\phi}$.
  Since $N/\C_N(K/L)$ acts
  coprimely and
  fixed point freely on $K/L\iso\Omega/\crp{E}^*$, we
  have
  $[\Omega/\crp{E}^*, N]=\Omega/\crp{E}^*$.
  It follows that $[\Omega, N]\crp{E}^{*} = \Omega$, as claimed.

  Let $k$ be an odd natural number.
  The set
  $\{x\in K/L\mid x^k=1\}$ is a characteristic subgroup of $K/L$
  and thus normalized by $N$.
  Let \[\Omega_k= \{ u\in \Omega \mid
                      u^k \in \crp{E} \} \]
  be the pre-image in $\Omega$.
  Since $N$ acts coprimely and fixed point freely on
  $\Omega_k/\crp{E}^*$,
  it follows that $[\Omega_k, N]\crp{E}^* =\Omega_k$.
  Thus
  \[P\cap \Omega_k
     = P\cap [\Omega_k, N]\crp{E}^*
     = [\Omega_k, N](P\cap \crp{E}^*)
     = [\Omega_k ,N] Z.
  \]
  Next we claim that $u^k= 1$ for every
  $u\in [\Omega_k,N]$.

  First, let $s,t \in \Omega_k$ be arbitrary.
  Remember that
  $[\phantom{s}, \phantom{i}]\colon \Omega \times \Omega \to
  \crp{E}^*$ is bilinear.
  Thus
  \begin{align*}
    (st)^k &= s^{k} t^k[ s, t]^{\binom{k}{2}}
               = s^k t^k [s^{\binom{k}{2}}, t]
              = s^k t^k
  \end{align*}
  as $k$ is odd.

  Let $a\in N$ and $s\in \Omega_k$.
  We apply the last equation to $s^{-1}$ and
  $t= s^a$:
  \[ [s, a]^k = (s^{-1} s^a)^k = s^{-k}(s^a)^k = s^{-k} (s^k)^a = 1,\]
  as $a\in N$ centralizes $s^k\in \crp{E}^*$.
  It follows that $[\Omega_k,N]$ is generated by elements of order
  $k$.
  Since we saw $(st)^k = s^k t^k$ before,
  it follows that $u^k=1$ for all
  $u\in [\Omega_k,N]$, as claimed.

  If we take for $k$ the exponent of $K/L$, then $\Omega_k=\Omega$
  and  thus $P=[\Omega,N]$ has
  the same exponent as $K/L$.
  It follows that
  $Z$ is finite (and cyclic).
  Thus $\abs{P}= \abs{K/L}\abs{Z}$ is finite, too.

  Now let $v\in P$ be arbitrary and set $k= \ord(Zv)$.
  We have seen earlier that
  $P\cap \Omega_k = [\Omega_k, N]Z$. Thus
  there is  $u\in Zv \cap [\Omega_k, N]$, and we have seen before
  that $u$ has order $k$.
  This means that $Z\cap \erz{u} = 1$,
  which shows~\ref{i:pzmu_coset}.
\end{proof}
\begin{lemma}\label{l:adm_semi}
  Let $P\leq \Omega$ be a group
  satisfying Properties~\ref{i:pzmu_sur}--\ref{i:pzmu_fin} of
  Lemma~\ref{l:pzmu}, and set $Z= P \cap \crp{E}$.
  Let $\mu\colon Z\to \rats(\phi)$ be the restriction
  of the central character of $\phi$ to $Z$.
  Then
  \begin{enums}
  \item \label{i:adms_inv}
        The set $\{ue_{\phi}\mid u \in P\}\subseteq
        (\rats(\phi)Ke_{\phi})^L$ is an admissible subgroup for
        the character five $(G_{\phi}, K, L, \theta, \phi)$.
  \item \label{i:adms_pzisokl}$Z=\Z(P)$ and $P/Z\iso K/L$ as groups with
        $G$\nbd action.
  \item \label{i:adms_siso} $S\iso \rats(\phi)Pe_{\mu}$.
  \item \label{i:adms_uvcom}The commutator map defines a
        non-degenerate alternating form $P/Z\times P/Z \to Z$.
  \end{enums}
\end{lemma}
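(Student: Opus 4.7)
The plan is to prove the four parts in the order \ref{i:adms_pzisokl}, \ref{i:adms_uvcom}, \ref{i:adms_siso}, \ref{i:adms_inv}, so that each can build on its predecessors; the centerpiece is the identification $P/Z \iso K/L$ in \ref{i:adms_pzisokl}, which drives the dimension count in \ref{i:adms_siso} and the order bound in \ref{i:adms_inv}. For \ref{i:adms_pzisokl}, the containment $Z \subseteq \Z(P)$ is immediate since $Z \subseteq \crp{E} = \Z(S)$. Conversely, hypothesis \ref{i:pzmu_sur} gives $\Omega = P\crp{E}^{*}$, and $\Omega$ spans $S$ over $\crp{E}$; so anything in $\Z(P)$ automatically centralizes all of $S$ and hence lies in $\Z(S)\cap P = Z$. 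The restriction of $\eps\colon \Omega \to K/L$ to $P$ is onto by \ref{i:pzmu_sur} with kernel $P\cap \crp{E}^{*}= Z$, and this identification is $G$-equivariant because $\eps$ is.

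For \ref{i:adms_uvcom}, the assumption that $K/L$ is abelian forces $[u,v] \in S_1 = \crp{E}^{*}$ for all $u,v\in \Omega$, so $[u,v] \in P\cap \crp{E}^{*} = Z$ when $u,v\in P$, and the commutator map descends to a well-defined alternating pairing on $P/Z\iso K/L$. This pairing coincides with $\eskp_{\phi}$ by Lemma~\ref{l:twisted0}, and $\eskp_{\phi}$ is non-degenerate on $K/L$ because $\phi$ is fully ramified in $K$ (Lemma~\ref{l:fullyramifiedcond}\ref{ie:fr_onlygood}, or the proof of Corollary~\ref{c:frval}); alternation is trivial.

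For \ref{i:adms_siso}, the inclusion $P \hookrightarrow S$ extends to a $\rats(\phi)$-algebra homomorphism $\rats(\phi) P \to S$ which, by the very definition of $\mu$, sends $z\in Z$ to $\mu(z)\cdot 1_S$; it therefore factors through $\rats(\phi)P e_{\mu}$. Surjectivity holds because $\Omega = P\crp{E}^{*}$ spans $S$ over $\crp{E}=\rats(\phi)$. A dimension count then closes the argument: $\dim_{\rats(\phi)}\rats(\phi)Pe_{\mu} = \abs{P/Z} = \abs{K/L}$ by~\ref{i:adms_pzisokl}, while $\dim_{\rats(\phi)}S = \abs{K/L}$ by Lemma~\ref{l:s-einfach} applied to $\rats(\phi)$ (transferred to our setting via Lemma~\ref{l:fe_iso}).

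For \ref{i:adms_inv}, the $\rats$-algebra iso $\rats K f \to \rats(\phi)K e_{\phi}$ of Lemma~\ref{l:fe_iso} restricts to a $G_{\phi}$-equivariant iso $S \to (\rats(\phi)Ke_{\phi})^{L}$ sending $u \mapsto ue_{\phi}$; under it, surjectivity onto $K/L$ transfers from~\ref{i:pzmu_sur} and $G_{\phi}$-invariance from~\ref{i:pzmu_inv}. The main obstacle I foresee lies in verifying the last admissibility condition (Lemma~\ref{l:admtripclosed}\ref{i:adm_fin}), namely $\abs{P}\mid \abs{K/L}^{2}$, since Lemma~\ref{l:pzmu} supplies only the equal-exponent statement, not an order bound. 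To bridge this gap I use that $Z$, being a finite multiplicative subgroup of the field $\crp{E}$, is cyclic; its order therefore divides the exponent of $P$, which equals $\exp(K/L)\mid \abs{K/L}$ by~\ref{i:pzmu_fin}. Combined with $\abs{P}=\abs{K/L}\cdot \abs{Z}$ from~\ref{i:adms_pzisokl}, this yields the required divisibility.
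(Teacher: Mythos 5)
Your proof is correct and takes essentially the same route as the paper's, which simply cites Lemma~\ref{l:fe_iso} for part~\ref{i:adms_inv}, Lemma~\ref{l:adm_mu} (whose proof is exactly your dimension-count argument) for part~\ref{i:adms_siso} and $Z=\Z(P)$, and verifies non-degeneracy via Lemma~\ref{l:twisted0} just as you do. Your explicit bridge for the admissibility bound $\abs{P}\mid\abs{K/L}^2$ --- cyclicity of $Z$ as a finite subgroup of $\crp{E}^*$, so $\abs{Z}$ divides $\exp(P)=\exp(K/L)$ --- correctly supplies a detail the paper leaves implicit in the proof of Lemma~\ref{l:pzmu}.
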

\begin{proof}
  Part~\ref{i:adms_inv} follows from applying the isomorphism of
  Lemma~\ref{l:fe_iso}.
  Part~\ref{i:adms_siso} and $Z=\Z(P)$ then follows from
  Lemma~\ref{l:adm_mu}.
  That the isomorphism $P/Z\iso K/L$ respects the action of $G$ is
  clear.
  Finally, the form
  $\eskp \colon K/L\times K/L \to
  \rats(\phi)$ is non-degenerate. Let $x$, $y\in K/L$ be the
  images of $u$, $v\in P$ under the canonical homomorphism
  $\eps\colon P\to K/L$. Then, by Lemma~\ref{l:twisted0},
  \[ \skp{x,y}_{\phi}e_{\phi}= [ue_{\phi}, ve_{\phi}]
      = [u,v]e_{\phi} = \mu([u,v])e_{\phi}.\]
  It follows that $(u,v)\mapsto \mu([u,v])$ is non-degenerate, and
  thus $[\phantom{x},\phantom{x}]$ itself is nondegenerate.
\end{proof}
In the next result, $\Aut S$ denotes the set of the ring automorphisms of $S$,
which
are the automorphisms of $S$ as $\rats$\nbd algebra. We could also
work with the automorphisms of $S$ as $\crp{F}$\nbd algebra
(where $\crp{F}=\C_{\crp{E}}(G)$).
\begin{lemma}\label{l:adm_innsemi}
  Let $P$ be as in Lemma~\ref{l:pzmu}.
  The action of $G$ on $S$ defines an homomorphism
  \[ \kappa\colon G \to A := \{ \alpha\in \Aut S \mid
                P^{\alpha} = P  \}\]
  with $L$ in the kernel.
  Let
  \[I= \{ \alpha \in \C_{A}(\crp{E}) \mid
            \alpha_{|P} \in \Inn P\}.\]
  Then $ K\kappa=I \iso \Inn P$.
\end{lemma}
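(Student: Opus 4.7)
The plan is to construct mutually inverse correspondences between the cosets of $L$ in $K$ and the elements of $I$, using the surjection $\eps\colon P\to K/L$ (part~\ref{i:pzmu_sur} of Lemma~\ref{l:pzmu}). First, $\kappa$ is well-defined because $\rats K f$ is $G$-invariant ($K\nteq G$ and $f$ is fixed by $G$ by semi-invariance of $\phi$), so conjugation by $G$ preserves $S=(\rats K f)^L$, and by part~\ref{i:pzmu_inv} of Lemma~\ref{l:pzmu} it preserves $P$. Since $S=\C_{\rats K f}(L)$, the subgroup $L$ acts trivially on $S$, so $L\leq \Ker\kappa$.

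To show $K\kappa\subseteq I$, fix $k\in K$ and choose $p\in P$ with $\eps(p)=Lk$, which is possible by surjectivity. Then $p\in S\cap \rats L k f$, so $p=\eta k f$ for some $\eta\in \rats L$, where $f$ is central in $\rats K$ (because $f=e_{(\theta,\rats)}=e_{(\phi,\rats)}$ by full ramification). For $s\in S$ we have $s\eta=\eta s$ (as $s\in \C_{\rats K f}(L)$), and $f$ commutes with $s^k=k^{-1}sk\in \rats K f$, so
\[ p^{-1}sp = p^{-1}\eta k\cdot(k^{-1}sk) f = p^{-1}(\eta kf)\cdot s^k = f\cdot s^k = s^k. \]
Hence the action of $k$ on $S$ coincides with the inner automorphism of $S$ by $p$; restricting to $P$ gives $(k\kappa)|_P\in \Inn P$. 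Moreover, $K\leq G_{\phi}$ (every constituent of $\theta_L=n\phi$ is $\phi$), so $K$ centralizes $\crp{E}$ by Lemma~\ref{l:centers}, and $k\kappa\in I$.

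For the reverse inclusion, given $\alpha\in I$ write $\alpha|_P$ as conjugation by some $p\in P$ and pick $k\in K$ with $\eps(p)=Lk$. By the previous calculation $k\kappa$ also restricts to conjugation by $p$ on $P$, and both $\alpha$ and $k\kappa$ centralize $\crp{E}$. Since $\Omega=P\crp{E}^{*}$ spans $S$ as an $\crp{E}$\nbd vector space, the two $\crp{E}$\nbd linear automorphisms agree on all of $S$, so $\alpha=k\kappa$. Thus $K\kappa=I$. Finally, the homomorphism $I\to \Inn P$ sending $\alpha$ to $\alpha|_P$ is surjective by the definition of $I$ and injective because $\alpha\in I$ with $\alpha|_P=\mathrm{id}$ also fixes $\crp{E}$, hence fixes the generating set $\Omega$ of $S$. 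This gives $I\iso \Inn P$.

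The chief bookkeeping obstacle is that $f$, not $1\in\rats K$, is the identity of $\rats K f$, so $p^{-1}$ has to be read as the inverse inside $\rats K f$ (with $pp^{-1}=f$); once this is kept in mind, the argument parallels the invariant case of Lemma~\ref{l:adm_inn} via the isomorphism of Lemma~\ref{l:fe_iso}.
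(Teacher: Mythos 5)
Your proof is correct and follows the same route as the paper: the key points in both are that $L$ centralizes $S$, so conjugation by $k\in K$ on $S$ agrees with conjugation by any graded unit (in particular any element of $P$) lying over $Lk$, and that $P$ together with $\crp{E}$ generates $S$, so an element of $\C_A(\crp{E})$ is determined by its restriction to $P$. You merely spell out the one-line computation $s^k=s^p$ in more detail; the only cosmetic quibble is that surjectivity of $I\to\Inn P$ is not literally "by the definition of $I$" but by the (trivial) observation that conjugation by $p\in P\leq S^*$ gives an element of $\C_A(\crp{E})$ restricting to the given inner automorphism.
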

\begin{proof}
  The first assertion is clear.

  The map $\C_{A}(\crp{E}) \to \Aut P$, $\alpha\mapsto \alpha_{|P}$,
  is injective, since
  $\crp{E}$ and $P$ generate $S$ as ring and $A \leq \Aut S$.
  By definition, $I$ maps into $\Inn P$.
  Conversely, every inner automorphism of $P$ induces an inner automorphism
  of $S$, simply since $P\leq S^*$, and thus centralizes
  $\crp{E}=\Z(S)$. Thus $I\iso \Inn P$.
  If $k\in K$, then for every unit  $u\in S_{Lk}$
  we have $s^k = s^u$ for all $s\in S$, since $L$ centralizes $S$.
  It follows $K\kappa =I$.
\end{proof}
\begin{lemma}
  Let $A$ and $I$ be as in Lemma~\ref{l:adm_innsemi}. Then $I$ is
  the kernel of the natural map
  $A\to \Aut(P/Z)\times \Aut \crp{E}$.
\end{lemma}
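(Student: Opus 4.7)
The plan is to verify the map is well-defined, observe the easy inclusion $I\subseteq\ker$, and then deduce the reverse inclusion from Skolem--Noether together with the non-degeneracy of the commutator form. For well-definedness: every $\alpha\in A$ is a ring automorphism of $S$, so preserves the centre $\crp{E}=\Z(S)$ (Lemma~\ref{l:centers}), and preserves $P$ by definition of $A$, so also preserves $Z=P\cap\crp{E}$. Thus the map $\alpha\mapsto(\alpha_{|P/Z},\alpha_{|\crp{E}})$ is well defined. If $\alpha\in I$, then $\alpha$ centralises $\crp{E}$ by definition of $I$, and $\alpha_{|P}\in\Inn P$ acts trivially on $P/\Z(P)=P/Z$ by Lemma~\ref{l:adm_semi}, part~\ref{i:adms_pzisokl}; so $I\subseteq\ker$.

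For the reverse inclusion, let $\alpha\in A$ centralise $\crp{E}$ and act trivially on $P/Z$. Since $S$ is central simple over $\crp{E}$ and $\alpha$ is an $\crp{E}$-algebra automorphism, Skolem--Noether yields $u\in S^{\ast}$ with $\alpha(s)=u^{-1}su$ for all $s\in S$. For each $p\in P$ the assumption on $P/Z$ gives $z(p):=[p,u]\in Z$, and $z\colon P\to Z$ is a homomorphism because $Z$ is central in $P$. For $p\in Z\leq\crp{E}$ the element $u$ commutes with $p$ (since $\alpha$ centralises $\crp{E}$), so $z$ vanishes on $Z$ and descends to a homomorphism $\bar z\colon P/Z\to Z$.

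It now suffices to find $q\in P$ with $[p,q]=z(p)$ for all $p\in P$, for then conjugation by $q$ coincides with $\alpha$ on $P$, giving $\alpha_{|P}\in\Inn P$ and hence $\alpha\in I$. By Lemma~\ref{l:adm_semi}, part~\ref{i:adms_uvcom}, the commutator pairing induces an injection $\Psi\colon P/Z\to\Hom(P/Z,Z)$, $\Psi(q)(p)=[p,q]$. Now $Z$ is a finite subgroup of the field $\crp{E}$, hence cyclic, and its order $m$ divides $\exp(P)=\exp(K/L)=:e$ by Lemma~\ref{l:pzmu}, part~\ref{i:pzmu_fin}. A short counting argument (write $P/Z$ as a product of cyclic factors of orders $d_i\mid e$; then $|\Hom(P/Z,Z)|=\prod\gcd(d_i,m)$) gives $|\Hom(P/Z,Z)|\leq|P/Z|$ with equality iff $m=e$; injectivity of $\Psi$ forces equality, so $\Psi$ is in fact a bijection and the desired $q$ exists. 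The only slightly delicate point is this counting step that promotes non-degeneracy of the commutator form to bijectivity of $\Psi$; once it is in place, Skolem--Noether closes the argument.
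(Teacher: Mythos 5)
Your proof is correct, but the substantive half of it (the reverse inclusion) goes by a genuinely different route than the paper. The paper settles that direction in one stroke by quoting Isaacs' result that an automorphism of a $*$-group centralizing both $P/Z$ and $Z$ is inner (\cite[Lemma~4.2]{i73}), the $*$-group property being exactly Lemma~\ref{l:pzmu}\ref{i:pzmu_coset}. You instead give a self-contained argument: Skolem--Noether produces $u\in S^*$ inducing $\alpha$; the map $p\mapsto[p,u]$ descends to $\bar z\in\operatorname{Hom}(P/Z,Z)$; and surjectivity of $\Psi\colon P/Z\to\operatorname{Hom}(P/Z,Z)$ --- which you correctly extract from injectivity (non-degeneracy, Lemma~\ref{l:adm_semi}\ref{i:adms_uvcom}) together with the counting bound $\abs{\operatorname{Hom}(P/Z,Z)}\le\abs{P/Z}$, using that $Z$ is cyclic of order dividing $\exp(K/L)$ --- supplies $q\in P$ with $[p,q]=[p,u]$ for all $p$, so that $\alpha_{|P}$ is conjugation by $q$. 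What this buys is independence from Isaacs' Lemma~4.2 and from the $*$-group property; the price is the non-degeneracy of the commutator form, which in this paper is available for free from the fully ramified hypothesis. In effect you have reproved the relevant instance of Isaacs' lemma. Two cosmetic remarks: the Skolem--Noether detour is dispensable, since $z(p)=p^{-1}\alpha(p)$ is seen to be a homomorphism directly from $\alpha(p)\in Zp$ and $Z=\Z(P)$; and in the easy inclusion you should say explicitly that inner automorphisms centralize $P/Z$ \emph{because} $P/Z\iso K/L$ is abelian (equivalently $[P,P]\le Z$) --- acting trivially on $P/\Z(P)$ is not a general property of inner automorphisms.
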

\begin{proof}
   Since $P/Z\iso K/L$ is abelian, inner automorphisms of
   $P$ centralize $P/Z$. Thus $I$ is in the kernel of
  $A \to \Aut(P/Z)\times \Aut(\crp{E})$.
  Conversely, suppose $\alpha\in A$ acts trivially on $P/Z$ and
  on $\crp{E}$. Then $\alpha$ centralizes also $Z\subset \crp{E}$.
  It is known~\citep[Lemma~4.2]{i73} and not difficult to show
  that an automorphism of $P$ centralizing $P/Z$ and $Z$ is
  inner.
  (Here one needs that $P$ is a *-group in the sense
   of Isaacs~\cite[Def~4.1]{i73},
   which follows from Lemma~\ref{l:pzmu},
   Part~\ref{i:pzmu_coset}.)
  Thus $\alpha\in I$ as claimed.
\end{proof}
The next lemma generalizes Lemma~\ref{l:centrinv1} to our situation,
the proof is nearly the same.
\begin{lemma}\label{l:centralinvolution}
  There is $\tau\in A=\N_{\Aut S}(P)$ such that
  $\tau$ inverts $P/Z$, centralizes $\crp{E}$, $\tau^2=1$, and
  $U=\C_{A}(\tau)$ is a complement of $I$ in $A$.
\end{lemma}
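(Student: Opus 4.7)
My plan is to imitate the proof of Lemma~\ref{l:centrinv1} and then do the small additional work needed to pass from the automorphism group of $P$ to $A = \N_{\Aut S}(P)$. The crucial structural facts are available: by Lemma~\ref{l:pzmu}\ref{i:pzmu_coset} the group $P$ is a $*$-group in the sense of Isaacs, and $P/Z \iso K/L$ is abelian of odd order.

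First, I would apply Isaacs~\cite[Corollary~4.3]{i73} to produce an involution $\tau_{0}\in\C_{\Aut P}(Z)$ that inverts $P/Z$ and satisfies $\tau_{0}^{2}=1$. Next, I would extend $\tau_{0}$ to a ring automorphism $\tau$ of $S$ by declaring $\tau$ to act trivially on $\crp{E}$. Via the identification $S\iso \rats(\phi)Pe_{\mu}$ from Lemma~\ref{l:adm_semi}\ref{i:adms_siso} this is consistent, because $\tau_{0}$ fixes $Z=P\cap \crp{E}$ pointwise (and hence fixes the idempotent $e_{\mu}=\frac{1}{\abs{Z}}\sum_{z\in Z}\mu(z^{-1})z$); so $\tau_{0}$ extended trivially to $\rats(\phi)$ descends to an automorphism of $S$. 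Clearly $\tau$ normalizes $P$ and squares to the identity, so $\tau\in A$ with the prescribed properties.

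The main work is the complement assertion. I would first observe that the image of $\tau$ in $\Aut(P/Z)\times \Aut\crp{E}$ is $(-1,\id)$, which is central there; together with Lemma~\ref{l:adm_innsemi} (the statement that $I$ is precisely the kernel of $A\to\Aut(P/Z)\times\Aut\crp{E}$), this gives $[\alpha,\tau]\in I$ for every $\alpha\in A$, i.e. $\tau I$ is central in $A/I$. Next, $I\iso \Inn P\iso P/Z$ as an $A$-set, and $\tau$ acts on $P/Z$ by inversion; hence $\tau$ acts on $I$ by inversion. Since $\abs{P/Z}$ is odd, squaring is a bijection of $I$.

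Now I would verify $I\cap\C_{A}(\tau)=1$: if $\iota\in I$ commutes with $\tau$, then $\iota=\iota^{\tau}=\iota^{-1}$, so $\iota^{2}=1$ in a group of odd order, forcing $\iota=1$. To prove $A=I\,\C_{A}(\tau)$, given $\alpha\in A$, set $\iota=[\alpha,\tau]\in I$ and seek $\iota_{1}\in I$ with $\iota_{1}^{-1}\alpha\in\C_{A}(\tau)$; rearranging, this amounts to solving $[\iota_{1},\tau]=\iota$, and using that $\tau$ inverts $I$, this reduces to $\iota_{1}^{2}=\iota$, which is solvable uniquely because squaring is bijective on $I$. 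This finishes the proof.

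The only delicate point I foresee is checking that the ``extend trivially on $\crp{E}$'' construction in step two really produces a well-defined ring automorphism of $S$ (as opposed to only a $\crp{F}$-algebra automorphism); but this is taken care of by the explicit isomorphism $S\iso \rats(\phi)Pe_{\mu}$ of Lemma~\ref{l:adm_semi}\ref{i:adms_siso}, so no genuine obstacle remains.
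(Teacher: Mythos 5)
Your proposal is correct and follows essentially the same route as the paper: you construct $\tau$ in exactly the same way (Isaacs' Corollary~4.3 applied to the $*$-group $P$, then extension to $S$ via $S\iso\rats(\phi)Pe_{\mu}$), and you use the same key fact that $I$ is the kernel of $A\to\Aut(P/Z)\times\Aut\crp{E}$, so that $\tau I$ is central in $A/I$. The only divergence is the final step, where the paper obtains $A=I\,\C_{A}(\tau)$ from the Frattini argument applied to $\erz{\tau}\in\Syl_2\erz{I,\tau}$, while you solve for the complement by hand using bijectivity of squaring on the odd-order abelian group $I$ --- an equivalent computation, correct up to a harmless bookkeeping slip (the equation to solve is $(\iota_1^{\alpha})^{2}=\iota^{-1}$ rather than $\iota_1^{2}=\iota$, which changes nothing since $\alpha$ normalizes $I$ and squaring is bijective there).
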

\begin{proof}
  There is $\tau_0\in \Aut P$ of order $2$, inverting $P/Z$ and
  centralizing $Z$
  (Lemma~\ref{l:centrinv1} \cite[cf.][Lemma~4.3]{i73}).
  Since $S\iso \rats(\phi)P e_{\mu}$,
  this $\tau_0$ can be extended to an automorphism $\tau$ of $S$
  of order $2$ and centralizing $\crp{E}$.

  Observe that $\tau$ maps to a central element of
  $\Aut(P/Z)\times \Aut \crp{E}$. Thus $I\tau \in \Z(A/I)$ and so
   $\erz{I, \tau}\nteq A$.
  Since $I\iso P/Z$ has odd order,
  $\erz{\tau}\in \Syl_2\erz{I, \tau}$, and thus, by the Frattini
  argument, $A = I\C_{A}(\tau)$. As $\tau$ inverts $P/Z$,
  it follows $\C_I(\tau) =1$, as desired.
\end{proof}
\begin{cor}
  Let $H=\kappa^{-1}(U)$. Then $G=HK$ and $H\cap K=L$.
  Every element of $H_{\phi}$ is $K$\nbd good.
\end{cor}
\begin{proof}
  From $G\kappa \leq A=UI$ and $I=K\kappa \leq G\kappa$ it follows
  $G\kappa = (G\kappa \cap U)I$ and thus
  $G=HK$. If $k\in H\cap K$, then
  $k\kappa \in U\cap I=1$, and thus $s^k=s$ for all $s\in S$. It
  follows $k\in L$.

  That elements of $H_{\phi}$ are good follows from
  the corresponding result for
  invariant $\phi$ (Lemma~\ref{l:hallgood}).
\end{proof}
Note that by Proposition~\ref{p:existenceuniquenessofh}, the complement
$H$ is unique up to conjugacy. We will prove
Theorem~\ref{t:sigmacan} for the group $H$
of the last corollary.
We now work toward finding a suitable set of matrix units in $S$.
\begin{lemma}
  Let $R=\{ r\in P\mid r^{\tau} = r^{-1}\}$ with $\tau$ the automorphism of
  Lemma~\ref{l:centralinvolution}.
  Then $P = \bigdcup_{r\in R}Zr$.
\end{lemma}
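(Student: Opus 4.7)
The plan is to show that $R$ is a transversal for $Z$ in $P$: each coset $Zp$ contains exactly one element inverted by $\tau$. Given $p\in P$, an element $zp\in Zp$ with $z\in Z$ lies in $R$ iff $(zp)^{\tau}=(zp)^{-1}$. Since $\tau$ centralizes $Z$ by Lemma~\ref{l:centralinvolution}, the left side is $zp^{\tau}$; comparing with $p^{-1}z^{-1}$ and using that $z$ commutes with $p$ modulo a rearrangement, this condition is equivalent to
\[
z^{2} = (p^{\tau}p)^{-1}.
\]

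The right-hand side lies in $Z$: because $\tau$ inverts $P/Z$, we have $p^{\tau}\in p^{-1}Z$, so $p^{\tau}p\in Z$. Now the key point is that $|Z|$ is odd. Indeed, by Lemma~\ref{l:pzmu}\ref{i:pzmu_fin} the exponent of $P$ equals the exponent of $K/L$, which is odd by the hypothesis of Theorem~\ref{t:oddabelianschur}; hence every element of $P$, and in particular every element of $Z$, has odd order. Thus $|Z|$ is odd and squaring is a bijection on $Z$, yielding a unique $z\in Z$ with $z^{2}=(p^{\tau}p)^{-1}$. This produces an element of $R$ in $Zp$, proving that $P=\bigcup_{r\in R}Zr$.

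For the disjointness, suppose $r_{1},r_{2}\in R$ with $r_{1}=zr_{2}$ for some $z\in Z$. Applying $\tau$ (and using that $\tau$ centralizes $Z$) gives $r_{1}^{\tau}=zr_{2}^{\tau}=zr_{2}^{-1}$, while $r_{1}^{-1}=r_{2}^{-1}z^{-1}=z^{-1}r_{2}^{-1}$. Equating these forces $z^{2}=1$, and since $|Z|$ is odd we conclude $z=1$, so $r_{1}=r_{2}$. The union in the statement is therefore genuinely disjoint, and the lemma follows. The only substantive point in the argument is the oddness of $|Z|$, which is inherited from the oddness of $|K/L|$ via Lemma~\ref{l:pzmu}; everything else is a direct commutator manipulation using the two defining properties of $\tau$.
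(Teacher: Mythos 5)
Your argument is correct and is essentially the paper's own proof: both reduce the claim to solving $z^{2}=(p^{\tau}p)^{-1}$ (equivalently, extracting the unique square root in $Z$ of the element $z$ with $p^{\tau}=zp^{-1}$), using that $\tau$ inverts $P/Z$ and centralizes $Z$, and that $Z$ has odd order so that squaring is a bijection on $Z$. Your explicit verification of disjointness is just the uniqueness of that square root, which the paper states in one line.
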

\begin{proof}
  For arbitrary $x\in P$ one has $x^{\tau} = zx^{-1}$ for some
    $z\in Z$. There is a unique $d\in Z$ with $d^2=z$ as $Z$ has
    odd order.
    For this $d$ we get $d^{-1}x\in R$. Thus every coset of $Z$ contains
    exactly one element of $R$, as claimed.
\end{proof}
\begin{lemma}
  There are abelian subgroups $X, Y\leq P$
  with $X\cap Y = Z$ and $XY=P$.
  Set
  \[ e = \frac{1}{n}\sum_{x\in X\cap R} x\in S.\]
  Then $e^{\tau} = e$ and
  \[ \{ E_{r,s}= r^{-1}e s\mid r,s\in R\cap Y\}\]
  is a full set of matrix units in $S$.
  (Thus $S\iso \mat_n(\crp{E})$.)
\end{lemma}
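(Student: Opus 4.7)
My plan is to construct the abelian subgroups $X$ and $Y$ as preimages of a Lagrangian decomposition of $P/Z$, viewed as a symplectic abelian group via the commutator form of Lemma~\ref{l:adm_semi}, part~\ref{i:adms_uvcom}, and then to verify the matrix-unit relations on $\bar X = X \cap R$ and $\bar Y = Y \cap R$ by character-sum computations. Since $P/Z \cong K/L$ is abelian of odd order and carries a non-degenerate alternating form to $Z$, a Lagrangian decomposition exists: subgroups $X/Z$, $Y/Z$, each maximal totally isotropic of order $n$, with $X/Z \cap Y/Z = 1$ and $(X/Z)(Y/Z) = P/Z$. This is standard (reduce to primary components and induct on the order of a maximal isotropic subgroup; odd order ensures the needed complements). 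Isotropy of $X/Z$ forces $[X,X] = 1$, so $X$ and likewise $Y$ are abelian, with $X \cap Y = Z$ and $XY = P$.

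By the preceding lemma each coset of $Z$ in $P$ contains a unique element of $R$, so $\bar X$ is a size-$n$ transversal for $Z$ in $X$. Abelianness of $X$ gives $(xx')^{\tau} = (xx')^{-1}$, so $\bar X$ is closed under multiplication; and $Z \cap R = 1$ since $\tau$ centralizes $Z$ while $|Z|$ is odd. Hence $X = Z \times \bar X$, and likewise $Y = Z \times \bar Y$. The identity $e^2 = e$ is then the standard group-algebra idempotent calculation in the commutative subalgebra of $S$ generated by $\bar X$, and $e^{\tau} = e$ holds because $x \mapsto x^{-1}$ permutes $\bar X$.

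For the matrix-unit relations, the key computation is that for $y \in Y$, using $xy = yx[x,y]$ with $[x,y] \in Z \subseteq \Z(S)$ together with $xe = e$ for $x \in \bar X$, one obtains
\[
  e y e \;=\; y e \cdot \frac{1}{n}\sum_{x \in \bar X} [x,y].
\]
Since $X/Z$ is Lagrangian, the homomorphism $\bar X \to Z$, $x \mapsto [x,y]$, is trivial exactly when $y \in Z$; so by orthogonality of characters the scalar equals $1$ if $y \in Z$ and $0$ otherwise. For $r, s, r', s' \in \bar Y$ this yields $E_{r,s} E_{r', s'} = r^{-1} e (s r'^{-1}) e\, s' = \delta_{s, r'}\, E_{r, s'}$, where we use that $s r'^{-1} \in Z$ iff $s = r'$. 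A symmetric calculation, based on $r^{-1} x r = x[x,r]$ and the Lagrangian property of $Y/Z$, gives $\sum_{r \in \bar Y} E_{r,r} = 1_S$: the $x = 1$ term contributes $1$, while for $x \neq 1$ the inner sum $\sum_r [x,r]$ is a character sum that vanishes. Since $|\bar Y|^{2} = n^{2} = \dim_{\crp{E}} S$, the $E_{r,s}$ form a full set of matrix units, and in particular $S \cong \mat_n(\crp{E})$.

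The substantive step is the Lagrangian decomposition of $P/Z$: it is not automatic for arbitrary finite abelian groups with a non-degenerate alternating form, but odd order makes it available through the standard reduction to primary components and induction on the order of a maximal isotropic subgroup. Everything else is careful bookkeeping, crucially exploiting that $Z \subseteq \Z(S)$ so that all commutator sums become character sums over abelian groups, to which orthogonality applies.
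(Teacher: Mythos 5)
Your proof is correct and follows essentially the same route as the paper: the same Lagrangian decomposition $P/Z = X/Z \times Y/Z$ via the nondegenerate commutator form, the same observation that $X\cap R$ is a group of order $n$ making $e$ idempotent, and the same character-orthogonality computation (the paper phrases it as $e^y e = 0$ for $y \in Y\setminus Z$ by viewing $e$ and $e^y$ as distinct idempotents of $\crp{E}[X\cap R]$, which is your scalar $\frac{1}{n}\sum_x [x,y]$ vanishing). Your extra care about the existence of the Lagrangian decomposition for odd order is a reasonable addition that the paper leaves implicit.
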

\begin{proof}
  Remember that
  $[\phantom{x},\phantom{x}]\colon P/Z\times P/Z \to Z$ is a
  nondegenerate alternating form (Lemma~\ref{l:adm_semi}\ref{i:adms_uvcom}).
  Choose two maximal isotropic subspaces, $X/Z$ and $Y/Z$, with
  $P/Z = X/Z \times Y/Z$. Then $X$ and $Y$ are abelian and we have
  $X\cap Y=Z$ and $XY=P$.

  As $X$ is abelian, it follows that
  $R\cap X$ is a subgroup:
  We have $(rs)^\tau = r^{-1}s^{-1}= (rs)^{-1}$ for
  $r,s\in R\cap X$.
  The order of $X\cap R$ is $\abs{X/Z}=n$. It follows that $e$ is
  an idempotent.
  That $e^{\tau}=e$ is clear.

  Next, let $y\in Y\setminus Z$. We claim that
  $e^y e=0$.
  Note that the group algebra $\crp{E}[X\cap R]$ is contained
  canonically in $S$ as subalgebra, since
  $S= \bigoplus_{r\in R} \crp{E}r$.
  We may view $e$ and
  \[e^y = \frac{1}{n} \sum_{x\in X\cap R} x^y
        = \frac{1}{n} \sum_{x\in X\cap R} x[x,y]
        \]
  as idempotents in $\crp{E}[X\cap R]$, since $[x,y]\in \crp{E}$.
  If $y\in Y\setminus Z$, then
  $x\mapsto [x,y]\in \crp{E}^*$ is a nontrivial group homomorphism
  from $X\cap R$ to $\crp{E}^*$.
  It follows that $ e^y e=0$ and $eye =0$.
  Thus we get
  \[ E_{r,s}E_{u,v} = r^{-1}s e^s e^u u^{-1}v
                    = \delta_{s,u}r^{-1}e v
                    =\delta_{s,u} E_{r,v} \]
  for $r,s,u,v\in Y\cap R$.
  We also get
  \[ \sum_{r\in Y\cap R} E_{r,r}
      =\sum_{r\in Y\cap R} \frac{1}{n} \sum_{x\in X\cap R} x[x,r]
      = \frac{1}{n}\sum_{x\in X\cap R } x \sum_{r\in Y\cap R} [x,r]
      =1_S.\]
  The result follows.
\end{proof}
\newcommand{\halpha}{\widehat{\alpha}}
Note that the isomorphism $S\iso\mat_n(\crp{E})$
can be used to define an action of
$\Aut \crp{E}$ on $S$.
The point about the next lemma is that
the corresponding action homomorphism has image in
$U\leq \Aut S$, where
$U=\C_A(\tau)$, as defined in Lemma~\ref{l:centralinvolution}.
\begin{lemma}
    For $\alpha\in \Aut \crp{E}$, define
    $\halpha\in \Aut S$ by
    \[ \left( \sum_{r,s\in Y\cap R} c_{r,s} E_{r,s}\right)^{\halpha}
       = \sum_{r,s\in Y\cap R} c_{r,s}^{\alpha} E_{r,s}
       \quad \text{for}\quad
       c_{r,s}\in \crp{E}.\]
    Then $\alpha\mapsto \halpha$ is a monomorphism
    from $\Aut \crp{E}$ into $U=\C_{A}(\tau)$,
    and $\widehat{\alpha}_{\crp{E}}=\alpha$.
\end{lemma}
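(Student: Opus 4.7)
The plan is to verify the algebraic properties first and then concentrate on the two containment statements $\halpha\in A$ and $\halpha$ centralizes $\tau$.

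Since $\{E_{r,s}:r,s\in Y\cap R\}$ is a full set of matrix units in $S\iso\mat_n(\crp{E})$, the defining formula extends uniquely to a $\rats$\nbd algebra automorphism of $S$. Writing $c=\sum_{r\in Y\cap R}c\,E_{r,r}$ for $c\in\crp{E}$ immediately yields $\widehat{\alpha}_{\crp{E}}=\alpha$; this in turn shows that $\alpha\mapsto\halpha$ is a group homomorphism (both $\widehat{\alpha\beta}$ and $\halpha\widehat{\beta}$ fix the matrix units and act on $\crp{E}$ as $\alpha\beta$) and injective (any $\halpha$ in the kernel restricts to the identity on $\crp{E}$).

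The main step is to show that $\halpha$ normalizes $P$. By Lemma~\ref{l:adm_semi}, $P$ is generated by $Z$ together with $X\cap R$ and $Y\cap R$, so it suffices to check $\halpha$ on these subsets. For $z\in Z$: since $Z$ is a finite subgroup of $\crp{E}^{*}$, hence the unique cyclic subgroup of its order, $\alpha(Z)=Z$. For the other two generators, I will compute the matrix form using that $X\cap R$ is a subgroup (so $xe=e$ for $x\in X\cap R$), that $eue=0$ for $u\in Y\setminus Z$ (from the proof of the previous lemma), and the commutation $sx=xs[s,x]$ with $[s,x]\in Z$ central. This computation should give
\[
  y=\sum_{t\in Y\cap R}E_{y^{-1}t,\,t},
  \qquad
  x=\sum_{s\in Y\cap R}[s,x]\,E_{s,s},
\]
for $y\in Y\cap R$ and $x\in X\cap R$. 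The first expression has $\{0,1\}$-entries and is therefore $\halpha$-fixed, while the second yields $\halpha(x)=\sum_s[s,x]^{\alpha}E_{s,s}$.

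The main obstacle is to recognise this diagonal element as lying in $X\cap R$. For this I will invoke the non-degenerate alternating form $P/Z\times P/Z\to Z$ from Lemma~\ref{l:adm_semi}\ref{i:adms_uvcom}. Because $X/Z$ and $Y/Z$ are complementary Lagrangian (maximal isotropic) subgroups, the pairing restricts to a perfect pairing between them, and so the commutator induces an isomorphism $X/Z\iso\operatorname{Hom}(Y/Z,Z)$. The map $s\mapsto[s,x]^{\alpha}$ is a homomorphism $Y\cap R\to Z$ (bimultiplicativity together with $\alpha(Z)=Z$), hence equals $[\,\cdot\,,x']$ for a unique $x'\in X\cap R$; consequently $\halpha(x)=x'\in P$, so $\halpha\in A$. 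Finally, $\halpha$ centralises $\tau$: on $\crp{E}$ both compositions act as $\alpha$ since $\tau$ fixes $\crp{E}$ pointwise; on a matrix unit, $\tau(E_{r,s})=r\,e\,s^{-1}=E_{r^{-1},s^{-1}}$ (using $\tau(e)=e$, which holds because $X\cap R$ is closed under inversion and $\tau$ inverts its elements), and this is again fixed by $\halpha$. Thus $\halpha$ and $\tau$ commute on a generating set of $S$, so $\halpha\in U=\C_A(\tau)$.
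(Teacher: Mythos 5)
Your proposal is correct and follows the paper's proof almost step for step: the same matrix-unit computations give $y=\sum_{r}E_{ry^{-1},r}$ and $xe^{r}=[r,x]e^{r}$ for $y\in Y\cap R$, $x\in X\cap R$, and the same formula $E_{r,s}^{\tau}=E_{r^{-1},s^{-1}}$ yields $\widehat{\alpha}\tau=\tau\widehat{\alpha}$. The only divergence is the final identification for $x\in X\cap R$: the paper writes $\alpha$ restricted to the finite cyclic group $Z$ as a power map $z\mapsto z^{k}$ and concludes $x^{\widehat{\alpha}}=x^{k}\in X\cap R$ directly from bilinearity of the commutator, whereas you locate the image abstractly via the perfect pairing between the two Lagrangians $X/Z$ and $Y/Z$; both arguments are valid, the paper's being slightly more explicit and sparing you the (easy but unstated) verification that this restricted pairing is indeed perfect.
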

\begin{proof}
  It is clear that $\alpha\mapsto \halpha$ is a monomorphism
  into $\Aut S$.
  From
  \[ \left( \sum_{ r,s\in Y\cap R } c_{r,s} E_{r,s}
     \right)^{\tau}
     = \sum_{ r,s \in Y\cap R } c_{r,s} E_{r^{-1}, s^{-1}}\]
  we see that $\halpha \tau = \tau \halpha$.
  Thus $\halpha\in \C_{\Aut S}(\tau)$.
  It remains to show that
  $\halpha\in A$, that is, $\halpha$ maps $P$
  onto itself. We do this by showing that $\halpha$ maps
  $Z$, $Y\cap R$ and $X\cap R$ onto itself.

  It is clear that $\halpha$ maps $Z$ onto itself, since
  $\halpha_{|\crp{E}}=\alpha$ and $Z$ is a finite subgroup of
  $\crp{E}^*$.

  Let $y$ and $r\in Y\cap R$.
  Then $y e^r = yr^{-1}er = E_{ry^{-1}, r}$.
  Thus
  \[ y^{\halpha} = \left( \sum_{r\in Y\cap R} ye^r
                   \right)^{\halpha}
                 = \left( \sum_{r\in Y\cap R} E_{ry^{-1}, r}
                   \right)^{\halpha}
                 = \sum_{r\in Y\cap R} E_{ry^{-1}, r}
                 = y, \]
  so in fact $\halpha$ centralizes $Y\cap R$.

  Now let $x\in X\cap R$ and $r\in Y\cap R$.
  Then
  \begin{align*}
    xe^r &= \frac{1}{n} \sum_{u\in X\cap R} xu[u,r]
          = \frac{1}{n} \sum_{u\in X\cap R} u [x^{-1}u, r]\\
         &= [x^{-1},r] \frac{1}{n} \sum_{u\in X\cap R} u[u,r]
          = [r,x]e^r,
  \end{align*}
  with $[r,x]\in Z$. (Remember that the commutator map is
  bilinear in both variables.)
  As $Z$ is a finite subgroup of $\crp{E}$, there is
  $k\in \nats$ with $z^{\alpha}=z^k$ for all $z\in Z$.
  Thus
  \begin{align*}
    x^{\halpha}
    &= \left(  \sum_{ r\in R\cap Y } xe^r
       \right)^{\halpha}
     = \sum_{r\in R\cap Y} ([r,x]e^r)^{\halpha}
     = \sum_{r\in R\cap Y} [r,x]^k e^r\\
    &= \sum_{r\in R\cap Y} [r, x^k]e^r
     = x^k.
  \end{align*}
  Thus $\halpha$ maps $X\cap R$ onto itself.
  This finishes the proof that $P^{\halpha}=P$.
\end{proof}
Let $U_{\phi}=\C_{U}(\crp{E})$
with $U=\C_A(\tau)$ as in Lemma~\ref{l:centralinvolution}, and observe that
$H_{\phi} = \kappa^{-1}(U_{\phi})$.
For $u\in U$, we denote by $\alpha_u$ the restriction of $u$ to
$\crp{E}=\Z(S)$.
To prove Theorem~\ref{t:sigmacan}, we will show the following:
\begin{lemma}\label{l:sigmacan2}
      There is a map $\sigma\colon U\to S$ such that
      for all $u$, $v\in U$
      \[ \sigma(u)^{\widehat{\alpha_v}}\sigma(v)
         = \sigma(uv)
         \qquad \text{and}\qquad
         s^u = s^{\widehat{\alpha_u} \sigma(u)}
         \quad \text{for all $s\in S$},\]
      and such that
      $\sigma_{U_{\phi}}\colon U_{\phi}\to S$ is canonical in the
      sense of Definition~\ref{d:canonical}.
\end{lemma}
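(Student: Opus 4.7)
My plan is to reduce to the invariant case using the semidirect structure of $U$, and then to appeal to the uniqueness of the canonical magic representation. The homomorphism $u\mapsto\alpha_u$ from $U$ into $\Aut\crp{E}$ has kernel $U_\phi$, and since $\widehat\alpha$ restricts to $\alpha$ on $\crp{E}$, the subgroup $\widehat\Gamma=\{\widehat\alpha:\alpha\in\Aut\crp{E}\}$ of $U$ satisfies $\widehat\Gamma\cap U_\phi=1$ and $\widehat\Gamma\cdot U_\phi=U$. Consequently, each $u\in U$ decomposes uniquely as $u=\widehat{\alpha_u}u_0$ with $u_0=\widehat{\alpha_u}^{-1}u\in U_\phi$. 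Since $(G_\phi,K,L,\theta,\phi)$ is an invariant odd abelian character five, Theorem~\ref{t:oddabcan} together with Corollary~\ref{c:magicrepvals} furnishes a canonical magic representation $\sigma_\phi\colon H_\phi/L\to(\rats(\phi)Ke_\phi)^L$; using the isomorphism of Lemma~\ref{l:fe_iso} I regard this as a map $\sigma_\phi\colon U_\phi\to S^{\ast}$.

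I then define $\sigma(u):=\sigma_\phi(u_0)$. The restriction of $\sigma$ to $U_\phi$ is $\sigma_\phi$ by construction, so canonicity holds automatically. The action identity reduces immediately to the magic identity for $\sigma_\phi$:
\[ s^u = (s^{\widehat{\alpha_u}})^{u_0} = (s^{\widehat{\alpha_u}})^{\sigma_\phi(u_0)} = s^{\widehat{\alpha_u}\sigma(u)}. \]
For the cocycle identity, the calculation
\[ uv = \widehat{\alpha_u}\widehat{\alpha_v}\cdot(\widehat{\alpha_v}^{-1}u_0\widehat{\alpha_v})v_0 = \widehat{\alpha_{uv}}\cdot(u_0^{\widehat{\alpha_v}} v_0), \]
with $u_0^{\widehat{\alpha_v}}\in U_\phi$ since $U_\phi\nteq U$, combined with multiplicativity of $\sigma_\phi$ on $U_\phi$, reduces the desired equation $\sigma(u)^{\widehat{\alpha_v}}\sigma(v)=\sigma(uv)$ to the Galois-equivariance assertion
\[ \sigma_\phi(u_0^{\widehat\alpha})=\sigma_\phi(u_0)^{\widehat\alpha} \quad \text{for all } u_0\in U_\phi,\ \alpha\in\Aut\crp{E}. \]

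The hard part will be establishing this Galois equivariance. My approach is to show that $\tilde\sigma(u_0):=\sigma_\phi(u_0^{\widehat\alpha})^{\widehat\alpha^{-1}}$ is itself a canonical magic representation of $U_\phi$; Lemma~\ref{l:uniquecan} will then force $\tilde\sigma=\sigma_\phi$. Multiplicativity of $\tilde\sigma$ is a routine conjugation computation, and the defining magic action identity transfers using that $\widehat\alpha\in\Aut S$. What requires verification are the two canonicity conditions of Definition~\ref{d:canonical}: the determinantal order is preserved because $\widehat\alpha$ is a ring automorphism and the trace in the matrix-units realization satisfies $\tr(x^{\widehat\alpha})=\tr(x)^\alpha$, while the Sylow-subgroup multiplicity condition transfers because conjugation by $\widehat\alpha$ is an automorphism of $U_\phi$ permuting its Sylow subgroups. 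To upgrade equality of canonical characters to equality of magic representations, I note that the two differ by a linear character of $U_\phi$ (as discussed after Definition~\ref{d:magic}); and since Corollary~\ref{c:psiformel} together with Lemma~\ref{l:hallgood} show the canonical magic character is nonvanishing on all of $U_\phi$, this linear character must be trivial.
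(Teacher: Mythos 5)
Your proposal is correct and follows essentially the same route as the paper: both define $\sigma(u)=\sigma_{U_\phi}(\widehat{\alpha_u}^{-1}u)$ via the decomposition $U=U_\phi\rtimes\widehat{\Gamma}$ and reduce the crossed cocycle identity to equivariance of the canonical magic representation, which is then deduced from canonicity of the transported representation together with Lemma~\ref{l:uniquecan}. Your final step --- passing from uniqueness of the canonical magic \emph{character} to uniqueness of the \emph{representation} via nonvanishing of $\psi$ on the good elements of $U_\phi$ --- makes explicit a point the paper's proof leaves implicit.
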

Then Theorem~\ref{t:sigmacan} follows by
composing $\sigma$ and $\kappa\colon H\to U$.
\begin{proof}[Proof of Lemma~\ref{l:sigmacan2}]
  From the results of Section~\ref{sec:isaacs} it follows that there
  is an homomorphism
  $\sigma_{\phi}\colon U_{\phi}\to (\compl K e_{\phi})^L$ that is magic in
  the sense that $s^u = s^{\sigma_{\phi}(u)}$ for all
  $s\in (\compl K e_{\phi})^L$ and
  $u\in   U_{\phi}$ (Lemma~\ref{l:oddweil}),
  and we may assume that $\sigma_{\phi}$ is canonical in the
  sense of Definition~\ref{d:canonical} by the results of
  Section~\ref{sec:canonical}.
  By Corollary~\ref{c:magicrepvals}, the image of $\sigma $ is
  contained in $(\rats(\phi)K e_{\phi})^L$.
  Now remember that
  $S \ni s \mapsto
   se_{\phi} \in (\rats(\phi)K e_{\phi})^L$
  is an isomorphism (Lemma~\ref{l:fe_iso}).
  We get a unique homomorphism
  $\sigma_{U_{\phi}}\colon U_{\phi} \to S$ such that
  $\sigma_{U_{\phi}}(u)e_{\phi} = \sigma_{\phi}(u)$
  for all $u\in U_{\phi}$.
  Moreover, for $s\in S$ and $u\in U_{\phi}$ we have
  $s^u = s^{\sigma_{U_{\phi}}(u)}$, again
  by Lemma~\ref{l:fe_iso}.
  We must extend $\sigma_{U_{\phi}}$ to
  a magic crossed representation of $U$.

  For $u\in U$ and $z\in \crp{E}$, we have
  $z^{\widehat{\alpha_u}}=z^u$ and
  thus $\widehat{\alpha_u}^{-1}u \in U_{\phi}$.
  Now define
  \[ \sigma(u) := \sigma_{U_{\phi}}( \widehat{\alpha_u}^{-1} u).\]
  Since $U_{\phi}= \Ker( u\mapsto \alpha_u)$, the map $\sigma$
  extends $\sigma_{U_{\phi}}$.
  For $s\in S$,
  \[ s^{\widehat{\alpha_u}\sigma(u)}
      = s^{\widehat{\alpha_u} \sigma_{U_{\phi}}(\widehat{\alpha_u}^{-1} u)}
      = s^{\widehat{\alpha_u} (\widehat{\alpha_u}^{-1}u)}
      = s^u.\]
  To see that $\sigma$ is a crossed representation, we need the
  following fact:
  \begin{equation}\label{equ:canon_inv}
  \sigma_{U_{\phi}}(u)^{a} = \sigma_{U_{\phi}}(u^{a})
     \quad \text{for arbitrary $u\in U_{\phi}$ and $a\in U$}.
  \tag{*}
  \end{equation}
  Assuming this for the moment, we see that
  \begin{align*}
    \sigma(u)^{\widehat{\alpha_v}}\sigma(v)
      &= \sigma_{U_{\phi}}\left( \widehat{\alpha_u}^{-1} u\right)^{\widehat{\alpha_v}}
         \sigma_{U_{\phi}}\left( \widehat{\alpha_v}^{-1}v\right)
      \\
      &= \sigma_{U_{\phi}} \left( \left(\widehat{\alpha_u}^{-1}u
                         \right)^{\widehat{\alpha_v}}
                  \right)
         \sigma_{U_{\phi}}\left( \widehat{\alpha_v}^{-1} v
                 \right)
       \\
      &= \sigma_{U_{\phi}} \left( \widehat{\alpha_v}^{-1}
                         \widehat{\alpha_u}^{-1}
                         uv
                  \right)
       = \sigma_{U_{\phi}}\left( \widehat{\alpha_{uv}}^{-1} uv
                 \right)
       = \sigma(uv),
  \end{align*}
  where the second equation follows from~\eqref{equ:canon_inv}, applied
  to $a=\widehat{\alpha_v}\in U$.

  To establish~\eqref{equ:canon_inv},
  view  $a\in U$ as fixed and consider the
    map $u\mapsto (\sigma_{U_{\phi}})^a(u)= {\sigma_{U_{\phi}}}(u^{a^{-1}})^{a}$.
    We will show that $(\sigma_{U_{\phi}})^a$ is also a canonical magic representation.
    From uniqueness it will then follow that
    $(\sigma_{U_{\phi}})^a={\sigma_{U_{\phi}}}$,
   that is, $\sigma_{U_{\phi}}(u)^a= (\sigma_{U_{\phi}})^a(u^a)= {\sigma_{U_{\phi}}}(u^a)$ as claimed.

    Clearly $(\sigma_{U_{\phi}})^a$ is a homomorphism.

    Let $s^a\in S$. Then
    \[ (s^a)^{(\sigma_{U_{\phi}})^a(u)} = (s^a)^{{\sigma_{U_{\phi}}}(aua^{-1})^a}
          = (s^{{\sigma_{U_{\phi}}}(aua^{-1})})^a
          = (s^{aua^{-1}})^a = (s^a)^u. \]
    Thus $(\sigma_{U_{\phi}})^a$ is magic.

    Let $\psi$ be the character of ${\sigma_{U_{\phi}}}$ with values
    in $\crp{E}$ on which $U$ acts.
    Then $\psi^a$ defined by
    $\psi^a(u^a) = \psi(u)^a$ is the character of $(\sigma_{U_{\phi}})^a$.
    The definition of ``canonicalness'' (Definition~\ref{d:canonical})
    is invariant to conjugation by group automorphisms and field
    automorphisms, and thus $\psi^a$ is canonical.
    It follows that $(\sigma_{U_{\phi}})^a$ is magic and canonical and thus
    $(\sigma_{U_{\phi}})^a=\sigma_{U_{\phi}}$.
    The claim, \eqref{equ:canon_inv}, follows.
\end{proof}
This finishes the proofs of Theorem~\ref{t:oddabelianschur}
and Proposition~\ref{p:oddabschur_sup}.

\section[Isaacs correspondence]{Isaacs correspondence and Schur indices}\label{sec:appl}
\begin{thm}\label{t:strongschur}
  Let $L, K\nteq G$ with $L\leq K$ and $K/L$ of odd order.
  Suppose there is $M\leq G$ with
  $MK\nteq G$, $(\abs{M/L}, \abs{K/L})=1$ and
  $\C_{K/L}(M)=1$.
  Let $\phi\in \Irr_M L$ and $\theta\in \Irr_M K$ with
  $(\theta_L, \phi)>0$ and set $H=\N_G(M)$.
  Set $e= (e_{(\theta, \rats)})^G$ and
  $f = (e_{(\phi, \rats)})^H$.
  Then
  $\rats G e \iso \mat_n(\rats H f)$, where
  $n=\theta(1)/\phi(1)$.
  There is a canonical bijection between
  $\Irr(G\mid e)$ and $\Irr(H\mid f)$ which commutes with
  field automorphisms and preserves Schur indices.
  (More precisely, the correspondence has
   Properties~\ref{i:c_irr}--\ref{i:c_brauer}
   of Proposition~\ref{p:oddabschur_sup}.)
\end{thm}
We need the following version of the
going-down
theorem~\cite[Theorem~6.18]{isaCTdov} for semi-invariant characters:
\begin{prop}\label{p:gdsemi}
  Let $K/L$ be an abelian chief factor of $G$ and
  suppose $\theta\in \Irr K$ is\/ $\crp{F}$-semi-invariant in $G$
  for some field\/ $\crp{F}\subseteq \compl$.
  Then one of the following holds:
  \begin{enums}
  \item \label{i:gd_ind} $\theta = \phi^K$ with $\phi\in \Irr L$,
         and either
         \begin{enums}
         \item \label{ii:gd_ind1} $\crp{F}(\phi) = \crp{F}(\theta)$, or
         \item \label{ii:gd_ind2} $K/L \iso \Gal( \crp{F}(\phi)/\crp{F}(\theta))$
               and
              $\phi$ is\/ $\crp{F}$-semi-invariant in $G$.
         \end{enums}
  \item \label{i:gd_res} $\theta_L =\phi \in \Irr L$.
  \item \label{i:gd_fr} $\theta_L = e\phi$ with $\phi\in \Irr L $ and
         $e^2= \abs{K/L}$, and\/ $\crp{F}(\theta)= \crp{F}(\phi)$.
  \end{enums}
\end{prop}
\begin{proof}
  Let $\phi$ be an irreducible constituent of $\theta_L$.
  Let
  \[ T = \{ g\in G\mid \phi^g \text{ is Galois conjugate to } \phi
                               \text{ over } \crp{F}\}.\]
  Let $g\in G$ and pick $\alpha\in \Gal(\crp{F}(\theta)/\crp{F})$
  with $\theta^g = \theta^{\alpha}$.
  Then
  \[ ( (\theta^{\alpha})_L, \phi^{\alpha})
       = ( \theta_L, \phi)
       = (\theta_L^g, \phi^g)
       = (\theta^{\alpha}_L, \phi^g),\]
  and thus $\phi^{\alpha} = \phi^{gk}$ for some $k\in K$.
  It follows that $G= KT$.
  Since $K/L$ is abelian, $K\cap T \nteq KT=G$ and thus either
  $K\cap T = L$ or $K\cap T = K$.

  If $K\cap T = L$, then $\phi^K = \theta$.
  Thus clearly $\crp{F}(\theta) \leq \crp{F}(\phi)$.
  Let $\alpha\in \Gal(\crp{F}(\phi)/\crp{F}(\theta))$. Then
  $(\theta_L, \phi^{\alpha}) = (\theta_L, \phi)>0$ and thus
  $\phi^{\alpha} = \phi^k$ for some $k\in K\cap T = L$, so that
  $\phi^{\alpha}= \phi$. It follows that
  $\Gal( \crp{F}(\phi)/\crp{F}(\theta))= 1$, and thus
  $\crp{F}(\theta)= \crp{F}(\phi)$, which is possibility~\ref{ii:gd_ind1}
  in situation~\ref{i:gd_ind}.

  Now suppose $K\cap T = K$, that is $T=G$ and $\phi$ is
  semi-invariant in $G$.
  Consider the inertia group $K_{\phi}$.
  For $g\in G$, there is $\alpha_g\in \Aut \crp{F}(\phi)$
  with $\phi^g = \phi^{\alpha_g}$, so that
  \[  K_{\phi}^g = K_{\phi^g} = K_{\phi^{\alpha_g}}= K_{\phi}.\]
  It follows that $K_{\phi}\nteq G$.
  Again, either
  $K_{\phi}= K$ or $K_{\phi}= L$.

  If $K_{\phi}= L$, then again
  $\phi^K = \theta\in \Irr K$,
  but now $\phi$ is semi-invariant in $K$.
  Since $\theta^k=\theta$ for $k\in K$, the homomorphism
  of Lemma~\ref{l:fnsg} maps $K/L$ into the Galois group
  $\Gal(\crp{F}(\phi)/\crp{F}(\theta) )$.
  Conversely, for $\alpha\in \Gal(\crp{F}(\phi)/\crp{F}(\theta))$
  we have
  $(\theta,\phi^{\alpha})=(\theta^{\alpha},\phi^{\alpha})=1$ and
  thus $\phi^{\alpha}$ and $\phi$ are conjugate in $K$.
  It follows that the homomorphism of Lemma~\ref{l:fnsg} is onto,
  and thus
  $K/L \iso \Gal(\crp{F}(\phi)/\crp{F}(\theta))$.
  This is situation~\ref{i:gd_ind}\ref{ii:gd_ind2}.

  Now assume $K_{\phi}=K$, so that $\phi$ is invariant in $K$.
  Set
  \[ \Lambda = \{ \lambda\in \Lin(K/L) \mid \theta\lambda = \theta\}
     \quad \text{and} \quad
     U = \bigcap_{\lambda \in \Lambda} \Ker \lambda.\]
  We claim that $U\nteq G$.
  If $\theta\lambda= \theta$, then
  $\theta^{\alpha}\lambda = \theta^{\alpha}$ for field
  automorphisms $\alpha$, as $\theta^{\alpha}$ and $\theta$ have the same
  zeros.
  Let $g\in G$ and $\lambda \in \Lambda$.
  From the semi-invariance of $\theta$ it follows that
  there is $\alpha\in \Aut \crp{F}(\theta)$
  with $\theta^{\alpha g}=\theta$. Thus
  \[ \theta \lambda^g
     = \theta^{\alpha g} \lambda^g
     = (\theta^{\alpha}\lambda)^g
     = \theta^{\alpha g} = \theta.
  \]
  Thus $\Lambda$ is invariant in $G$, and it follows that
  $U\nteq G$.
  Thus either $U=K$ or $U=L$.

  If $U=K$, then $\Lambda=\{1\}$ and thus
  the $\theta\lambda$ with $\lambda\in \Lin(K/L)$ are $\abs{K/L}$
  different constituents of $\phi^K$ occurring with the same
  multiplicity, $e$, so that
  \[ \abs{K/L}\phi(1)= \phi^K(1)
         = e \abs{K/L}\theta(1) = e^2\abs{K/L}\phi(1),\]
  and it follows $e=1$ (situation~\ref{i:gd_res}).

  If $U=L$, then $\theta$ vanishes on $K\setminus
  L$, and thus $\phi$ is fully ramified in $K$
  (situation~\ref{i:gd_fr}).
  It is clear that then $\crp{F}(\theta)=\crp{F}(\phi)$.
\end{proof}
In situation~\ref{i:gd_res}, we clearly have
$\crp{F}(\phi)\leq \crp{F}(\theta)$, and
\[\{\theta^{\alpha}
    \mid \alpha\in \Gal( \crp{F}(\theta)/\crp{F}(\phi) )
  \}
  \subseteq \Irr(K\mid \phi)
  = \{\theta \lambda \mid \lambda\in \Lin(K/L) \}.
\]
Thus
$1
 \leq \abs{\crp{F}(\theta):\crp{F}(\phi)}
 \leq \abs{K/L}$.
In our intended application, we will have
$\crp{F}(\theta)= \crp{F}(\phi)$.
Then the following result, probably well known, will be useful.
\begin{prop}\label{p:rescorr}
  Let $H\leq G$ be finite groups, $K\nteq G$ with $G=HK$,
   and set $L= H\cap K$.
  Assume that $\theta\in \Irr K$ is semi-invariant in $G$, that
  $\theta_L=\phi \in \Irr L$ and that
  $\crp{F}(\theta)=\crp{F}(\phi)$ for some field
  $\crp{F}\subseteq\compl$.
  Set $e = e_{(\theta, \crp{F})}$ and
  $ f = e_{(\phi, \crp{F})}$.
  Then
  \[ \crp{F}Hf \ni a\mapsto ae \in \crp{F}G e\]
  is an isomorphism of $H/L$-graded algebras.
\end{prop}
\begin{cor}
  In the situation of the proposition, restriction defines a
  bijection $\Irr(G\mid e)\to \Irr(H\mid f)$ commuting with field
  automorphism over $\crp{F}$ and preserving Schur indices  over
  $\crp{F}$. (More precisely, the correspondence has
   Properties~\ref{i:c_irr}--\ref{i:c_brauer}
   of Proposition~\ref{p:oddabschur_sup},
   with $\rats$ replaced by $\crp{F}$.)
\end{cor}
\begin{proof}
  The isomorphism $\crp{F}Hf \to \crp{F}Ge$ defines a map from
  $\Irr(G\mid e)$ to $\Irr(H\mid f)$, sending
  $\chi\in \Irr(G\mid e)$ to
  ${\chi}^{o}$ with
  ${\chi}^{o}(h) = \chi( he)= \chi(h)$.
  It is clear that this commutes with field automorphisms. The
  part on the Schur indices follows since
  $\crp{F}H e_{(\chi^{o},\crp{F})}
    \iso \crp{F}G e_{(\chi,\crp{F})}$.
\end{proof}
\begin{proof}[Proof of Proposition~\ref{p:rescorr}]
  Let $V$ be an absolutely irreducible module affording $\theta$.
  Then $Ve_{\phi}=V$ and $Ve_{\tilde{\phi}}=0$ for any other
  $\tilde{\phi}\in \Irr(L)$. It follows that
  $e_{\phi}e_{\theta}=e_{\theta}$ and
  $e_{\tilde{{\phi}}}e_{\theta}=0$ for $\tilde{\phi}\neq \phi$.
  In particular, this holds for
  $\tilde{\phi}=\phi^{\alpha}$ when
  $1\neq \alpha\in \Gal(\crp{F}(\phi)/\crp{F})$.
  Since we assume that $\crp{F}(\phi)=\crp{F}(\theta)$, it follows
  that
  $e_{\phi^{\alpha}} e_{\theta^{\beta}}
    = \delta_{\alpha,\beta} e_{\theta^{\beta}}$ for
  $\alpha$, $\beta\in \Gal(\crp{F}(\phi)/\crp{F})$.
  Thus $ef = e$.

  Now $a\mapsto ae$ maps $\crp{F}Lf$ into $\crp{F}Ke$.
  Since $\crp{F}Lf$ is simple, the map is injective, and since
  $\crp{F}Lf$ and $\crp{F}Ke$ both have dimension
  $\phi(1)=\theta(1)$ over its center, the map is an
  isomorphism.
  Finally, for $h\in H$, we get
  $\crp{F}Lf h \cdot e = \crp{F}K e h$.
  The proof follows.
\end{proof}
We also need a standard fact about
coprime action~\cite[Theorems~13.27, 13.28, 13.31 and Problem~13.10]{isaCTdov}
or~\cite[Corollary~2.4 and Lemma~2.5]{i82}.
\begin{lemma}\label{l:cpact}
  Let $A$ act on $K$ and let $L\nteq K$ be $A$-invariant.
  Suppose $(\abs{A}, \abs{K/L})=1$ and
  $\C_{K/L}(A)=1$. Then
  \begin{enums}
  \item If $\theta\in \Irr_A K$ then $\theta_L$ has a unique
        $A$-invariant constituent.
  \item If $\phi\in \Irr_A L$, then $\phi^K$ has a unique
         $A$-invariant constituent.
  \end{enums}
\end{lemma}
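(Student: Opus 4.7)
The plan is to deduce both parts from \emph{Glauberman's lemma}, the standard tool for coprime actions on sets. Recall its statement: if $A$ acts on a finite group $B$ with $(\abs{A},\abs{B})=1$, and $B$ acts on a set $\Omega$ compatibly with the $A$-action (meaning $(\omega b)^a = \omega^a b^a$), then every $A$-invariant $B$-orbit $\mathcal{O}\subseteq\Omega$ has $\mathcal{O}^A\neq\emptyset$, and $\mathcal{O}^A$ is a single $\C_B(A)$-orbit. In our setting we apply this with $B=K/L$, which acts coprimely on natural sets of characters; combined with $\C_{K/L}(A)=1$, it produces \emph{unique} $A$-fixed points in $A$-invariant $K/L$-orbits.

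Part~(i) is then immediate. By Clifford's theorem, the set $\Irr(L\mid\theta_L)$ of irreducible constituents of $\theta_L$ is a single $K$-orbit in $\Irr L$; since $L$ acts trivially on $\Irr L$, it is a $K/L$-orbit $\mathcal{O}$. The hypothesis $\theta^a=\theta$ makes $\mathcal{O}$ $A$-stable. Glauberman's lemma then yields that $\mathcal{O}^A$ is a single $\C_{K/L}(A)=1$ orbit, that is, a singleton. This is the unique $A$-invariant constituent.

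For Part~(ii), I would first reduce to the invariant case via Clifford correspondence. Set $T=K_{\phi}$, the inertia group in $K$; because $A$ fixes $\phi$, it stabilizes $T$. Induction gives an $A$-equivariant bijection $\Irr(T\mid\phi)\to\Irr(K\mid\phi)$, so it suffices to show that $\Irr(T\mid\phi)$ contains a unique $A$-invariant character. Since $\phi$ is now invariant in $T$, I pass to the Clifford extension machinery of Section~\ref{sec:magic}: choose (over $\compl$) an admissible subgroup $P\leq\Omega$ of the graded units of $(\compl Te_{\phi})^L$ covering $T/L$ with center $Z$ (Lemma~\ref{l:admtripclosed}), so that by Lemma~\ref{l:adm_mu} the set $\Irr(T\mid\phi)$ corresponds to $\Irr(P\mid\mu)$ for $\mu=\omega_{\phi}|_Z$. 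Then $A$ acts on $P$ centralizing $Z$ (as $\mu$ is $A$-fixed), with $(\abs{A},\abs{P/Z})=1$ and $\C_{P/Z}(A)\leq \C_{K/L}(A)=1$. Existence of an $A$-invariant character in $\Irr(P\mid\mu)$ follows from a Weil-representation argument exactly as in Lemma~\ref{l:oddweil}: the projective action of $A$ on the simple $P$-module lifts to a genuine action by trace normalization, giving an $A$-stable module. For uniqueness, any two $A$-invariant $\chi_1,\chi_2\in\Irr(P\mid\mu)$ satisfy $\chi_1=\chi_2\lambda$ for some $\lambda\in\Lin(P/Z)$ (when $P/Z$ is abelian, directly; in general, after reducing to chief factors of $P/Z$), and $A$-invariance of both forces $\lambda\in\Lin(P/Z)^A$; coprime action together with $\C_{P/Z}(A)=1$ gives $[P/Z,A]=P/Z$, hence $\lambda=1_P$.

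The main obstacle is Part~(ii) when $T/L$ (hence $P/Z$) fails to be abelian: the \emph{uniqueness} step via linear twists is then not automatic and one must either invoke the full Glauberman-Isaacs correspondence for $P$ modulo its center (which shows $\Irr_A(P\mid\mu)$ is in bijection with $\Irr(\C_P(A)\mid\mu)=\Irr(Z\mid\mu)=\{\mu\}$) or proceed by induction on $\abs{K/L}$ through an $A$-chief series, in each step of which the factor group is abelian and the previous argument applies. In the applications in this paper, $K/L$ is abelian of odd order, so this subtlety does not arise and the simpler version of the argument suffices.
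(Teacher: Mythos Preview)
The paper does not prove this lemma itself; it merely cites Isaacs' textbook and \cite{i82}, remarking that Part~(ii) is elementary when $K/L$ is abelian, reduces to that case when $K/L$ is solvable, and requires the Glauberman correspondence only in the non-solvable case. Your Part~(i) via Glauberman's lemma is exactly the standard argument and is correct.

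Your Part~(ii), however, has real gaps in execution. The lemmas you cite from Section~\ref{sec:magic} (Lemmas~\ref{l:admtripclosed}, \ref{l:adm_mu}, \ref{l:oddweil}) are stated and proved only for \emph{character fives}, that is, for fully ramified $\phi$; they are not available for a merely invariant $\phi$ in $T=K_{\phi}$. More seriously, your existence argument speaks of ``the simple $P$-module'' and lifts $A$ into it as in Lemma~\ref{l:oddweil}; but when $\phi$ is not fully ramified there are several simple modules over $\mu$, none distinguished, and Lemma~\ref{l:oddweil} (which depends on the involution $\tau$ peculiar to the fully ramified odd abelian situation) simply does not apply. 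Your uniqueness step also needs more: the assertion that any two members of $\Irr(P\mid\mu)$ differ by some $\lambda\in\Lin(P/Z)$ amounts to transitivity of the $\Lin(P/Z)$-action, which is true for $P/Z$ abelian but is not ``direct'' (one argues via the radical of the alternating commutator form), and even then $\lambda$ is only unique modulo the stabilizer, so $A$-invariance of $\lambda$ is not immediate.

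The clean route, which is presumably what \cite[Lemma~2.5]{i82} does and what you are circling around, is to apply Glauberman's lemma a second time. After reducing to $T=K_{\phi}$ with $T/L$ abelian (noting $\C_{T/L}(A)\leq\C_{K/L}(A)=1$), observe that $\Lin(T/L)$ acts transitively on $\Irr(T\mid\phi)$, that $A$ acts compatibly on both, that the orders are coprime, and that $\C_{\Lin(T/L)}(A)=1$ (dual to $[T/L,A]=T/L$). Glauberman's lemma then gives existence and uniqueness simultaneously, without any appeal to Section~\ref{sec:magic}. Your closing paragraph correctly identifies that the non-abelian case needs either induction through an $A$-chief series or the full Glauberman--Isaacs machinery, which matches the paper's remark.
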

The proof of the second assertion is relatively elementary if
$K/L$ is abelian~\cite[2.5]{i82} and can be reduced to that case
if $K/L$ is solvable. We will only need this case.
(In the case where $K/L$ is not solvable, the proof
 depends on the Glauberman correspondence.)
The first assertion is easy in any case.
\begin{proof}[Proof of Theorem~\ref{t:strongschur}]
  Suppose $G$ is a counterexample with $\abs{G/L}$ minimal.

  As $M/L$ acts coprimely and fixed point freely on $K/L$, it
  follows that above every $\phi\in \Irr_M L$,
  there lies a unique $\theta\in
  \Irr_M K$, and conversely (see Lemma~\ref{l:cpact}).
  Since this bijection is natural, it  commutes with the action of $H$ and
  with Galois action.
  In particular, $\rats(\theta)= \rats(\phi)$ and
  $H_{\theta}= H_{\phi}$.

  Set $e_1= e_{(\theta, \rats)}$
  and $f_1= e_{(\phi, \rats)}$.
  Let $U$ be the stabilizer of $e_1$ in
  $G$. Then $V=U\cap H$ is the stabilizer of $f_1$ in $H$.
  By Proposition~\ref{p:cliffordcorr} we have
  $\rats Ge \iso \mat_{\abs{G:U}}(\rats U e_1)$ and
  $\rats Hf \iso \mat_{\abs{H:V}}(\rats V f_1)$,
  and canonical character correspondences.
    If $U<G$, then induction applies and yields an isomorphism
  $\rats U e_1 \iso \mat_n( \rats V f_1)$ as in the theorem and a
  canonical character correspondence.
  This yields
  \[ \rats G e \iso \mat_{\abs{G:U}}\big( \mat_n( \rats V f_1)\big)
               \iso \mat_n \big( \mat_{\abs{H:V}}( \rats V f_1) \big)
               \iso \mat_n \big( \rats H f \big)\]
  and canonical character correspondences.
  Thus the configuration can not be a minimal counterexample.
  It follows that $U=G$, that is, $\theta$ is semi-invariant in
  $G$.

  In a counterexample, we must have $L<K$.
  Let $L<N\leq K$ with $N/L$ a chief factor.
  There is a unique $\eta\in \Irr_M N$ that lies above $\phi$, and
  this $\eta$ is a constituent of $\theta_N$ (Lemma~\ref{l:cpact}).
  This $\eta$ is also semi-invariant in $H$ and has the same field of
  values as $\phi$ and $\theta$.
  Let $U=\N_G(MN)$. If $N< K$, then induction applies to yield
  isomorphisms
  $\rats Ge \iso \mat_{n_1}(\rats U i)$ and
  $\rats U i \iso \mat_{n_2}(\rats H f)$
  (where $i= e_{(\eta, \rats)}$),
  and
  natural bijections between $\Irr(G\mid \theta)$ and $\Irr(U\mid \eta)$
  with the required properties,
  and between $\Irr(U\mid \eta)$ and $\Irr(H\mid \phi)$.
  It follows that
  $\rats G e \iso \mat_{n_1 n_2}(\rats H f)$ with
  $n_1n_2 = (\theta(1)/\eta(1))\cdot (\eta(1)/\phi(1))
          = \theta(1)/\phi(1)$, and that there is a natural
  bijection between
  $\Irr(G \mid \theta)$ and $\Irr(H\mid \phi)$.

  We may thus assume that $K/L$ is a chief factor of $G$.
  Then, according to the ``going down'' result for
  semi-\hspace{0pt}invariant
  characters (Proposition~\ref{p:gdsemi}),
  one of three possibilities occurs.

  First, suppose we are in Situation~\ref{i:gd_ind}
  of Proposition~\ref{p:gdsemi}, so that $\phi^K = \theta$.
  Here $\phi$ can not be
  semi-invariant in $K$, since this would imply
  $\rats(\theta)< \rats(\phi)$ which is impossible.
  It follows that $H$ is the inertia group of $f$.
  Then  Proposition~\ref{p:cliffordcorr} applies and yields the
  result.
  (In this case, $ n = \abs{K/L}$.)

  Now suppose that $\theta_L=\phi\in \Irr L$.
  As $\rats (\theta)=\rats( \phi)$, Proposition~\ref{p:rescorr}
  applies  and yields the result.
  (In this case, $n=1$. )

  Thus we assume that $\phi$ is fully ramified in $K$.
  Then Theorem~\ref{t:oddabelianschur} and Proposition~\ref{p:oddabschur_sup}
  (with $N=MK$) apply and yield the result.
  (In this case, $n = \sqrt{\abs{K/L}}$.)
\end{proof}
Note that oddness of $\abs{K/L}$ was only applied in the last
sentence of the proof (if solvability is assumed).
Nevertheless the result is false for $\abs{K/L}$ even.

Now assume that $N$ is a finite group on which the group $A$ acts.
Suppose that $\abs{N}$ and $\abs{A}$ are relatively prime,
and that $\abs{N}$ is odd.
As mentioned in the introduction, Isaacs used his results on fully
ramified sections to construct a correspondence between
$\Irr_A N$ and $\Irr \C_N(A)$.
We call this the Isaacs correspondence.
(Strictly speaking, we should call  it the Isaacs part of the
 Glauberman-Isaacs correspondence.)
We will need to recall the construction of the Isaacs
correspondence in the proof of the next result.
\begin{cor}\label{c:isaacscor}
  Let $N$ be a finite group of odd order,
  $A$ a group such that $\abs{N}$ and $\abs{A}$ are relatively
  prime, and suppose the semidirect product $AN$ is a normal
  subgroup of a finite group $G$.
  Set $C=\C_{N}(A)$ and $U=\N_G(A)$.
  Let $\chi\in \Irr_{A}(N)$ and
  $\chi^{*}\in \Irr C$ be its Isaacs correspondent.
  Set $i= (e_{ (\chi, \rats ) })^G$ and
  $i^{*} = (e_{ (\chi^{*}, \rats) })^U$.
  Then
  $\rats G i \iso \mat_n(\rats U i^{*})$
  as $(U/C)$\nbd graded algebras,
  with
  $n=\chi(1)/\chi^{*}(1)$.
  There is a natural correspondence between
  $\Irr(G\mid i)$ and $\Irr(U\mid i^{*})$
  preserving Schur indices.
\end{cor}
\begin{cor}
  The Isaacs correspondence preserves Schur indices.
\end{cor}
\begin{proof}
  The isomorphism of Corollary~\ref{c:isaacscor} restricts to an
  isomorphism $\rats N i\iso \mat_n(\rats C i^{*})$.
\end{proof}
In the situation of Corollary~\ref{c:isaacscor},
observe that $ G= N U$ by the Frattini argument, and that
$C= N\cap U$ since $(\abs{A}, \abs{N})=1$.
It follows that
$G/N \iso U/C$, and it makes sense to compare the character
sets above $\chi$ respective $\chi^{*}$.
G.~Navarro~\cite{nav94} attributes to L.~Puig the question if the
Clifford extensions of $G/N$ and $U/C$ associated to
$\chi$ and $\chi^{*}$ are
isomorphic in this case.
This has been answered in the affirmative by
M.~L.~Lewis~\cite{lewis97b}.
(To be exactly, he shows that the associated character triples
 are isomorpic, which is somewhat weaker.)
Corollary~\ref{c:isaacscor} generalizes this result.
If $\abs{N}$ even, that is, we are in the situation of the
Glauberman correspondence, the result is false. However, it is
true if we work over $\compl$ instead of
$\rats$~\cite{dade80, puig86, lewis97b}.
If $A$ is a $p$\nbd group, then it is true over $\rats_p$, the
$p$\nbd adic numbers~\cite{turull08c}.
\begin{proof}[Proof of Corollary~\ref{c:isaacscor}]
  Let $K=[N,A]$ and $L= K'$.
  Then $L$, $K\nteq G$.
  We may assume that $K>1$ (otherwise $C=N$ and $G=U$).
  It follows that $L<K$.
  By results on coprime action, $\C_{K/L}(A)=1$.
  \begin{figure}
  \setlength{\unitlength}{0.3ex}
  \centering
  \begin{picture}(120,132)(0,0)
    \put(3,63){\line(1,1){14}}  
    \put(23,83){\line(1,1){44}}  
    \put(33,33){\line(1,1){14}}  
    \put(53,53){\line(1,1){44}}  
    \put(73,33){\line(1,1){44}}  
    \put(3,57){\line(1,-1){24}}  
    \put(23,77){\line(1,-1){24}}  
    \put(53,47){\line(1,-1){14}}  
    \put(73,127){\line(1,-1){24}}  
    \put(103,97){\line(1,-1){14}}  
    \put(73,27){\line(1,-1){24}}   
    \put(103,3){\line(1,1){14}}    
    \put(-4,58){{$K$}}
    \put(-4,49){{$\theta$}}
    \put(16,77){$N$}
    \put(16,69){{$\chi$}}
    \put(67,126){$G$}
    \put(27,26){{$L$}}
    \put(27,18){{$\phi$}}
    \put(97,98){$H$}
    \put(45,47){{$LC$}}
    \put(46,39){{$\psi$}}
    \put(67,27){$C$}
    \put(66,19){$\chi^{*}$}
    \put(117,78){$U$}
    \put(117,17){$A$}
    \put(98,-3){$1$}
  \end{picture}
  \caption{Corollary~\ref{c:isaacscor}}
  \label{fig:aboveisaacs_p}
\end{figure}

  There is an $A$-invariant constituent
  $\theta$ of $\chi_K$~\cite[Theorem 13.27]{isaCTdov}.
  By Lemma~\ref{l:cpact}, there is
  a unique $A$-invariant constituent $\phi$ of $\theta_L$.
  Set $M=AL$ and $H=\N_G(M)$.
  Then $MK=AK\nteq G$, $\C_{K/L}(M)=1$ and, by the Frattini
  argument,
  $H=\N_{G}(A)L =UL$ (see Figure~\ref{fig:aboveisaacs_p}).

  Now Theorem~\ref{t:strongschur} applies and yields an isomorphism
  $\kappa\colon \rats G e \to \mat_{n_1}(\rats H f)$, where $e$ and $f$ are as
  in Theorem~\ref{t:strongschur}. The natural correspondence sends
  $\chi$ to an character $\psi\in \Irr(LC)$.
  By the inductive definition of the Isaacs
  correspondence, $\psi$ is the Isaacs
  correspondent of $\chi^{*}$.
  Let $j$ be the idempotent in $\Z(\rats LC)^H$ belonging to $\psi$.
  Then $\kappa$ restricts to an isomorphism
  $\rats G i \to \mat_{n_1}(\rats H j)$.
  Observe that $n_1= \chi(1)/\psi(1)$.
  By induction, $\rats H j \iso \mat_{n_2}(\rats U i^{*})$ with
  $n_2 = \psi(1)/\chi^{*}(1)$.
  The result now follows.
\end{proof}
%
\printbibliography
\end{document}